\documentclass[11pt, twoside, leqno, a4]{amsart}  
%\input{preamble}                                
% --- Packages ---
\usepackage{lipsum}
\usepackage{amsfonts}
\usepackage{graphicx}
\usepackage{epstopdf}
\usepackage{algorithmic}
\usepackage{calligra}
\usepackage[dvipsnames]{xcolor}
\usepackage{amsfonts,amsmath,amsthm,amssymb}
\usepackage{enumitem}
\usepackage{mathtools}
\usepackage{hyperref}
\usepackage{autonum}
\usepackage{hhline}
\usepackage{array}
\usepackage{diagbox}
\usepackage{tcolorbox}
\usepackage{mdframed}
\usepackage{multicol}
\usepackage{graphicx}
\usepackage{subcaption}
\usepackage{moreverb}
\usepackage{bbm}
\usepackage[margin=1.4in]{geometry}
\usepackage{todonotes}
\usepackage{scalerel,amssymb}
\allowdisplaybreaks
\usepackage{mathrsfs}  
\usepackage{lineno}
\usepackage{todonotes}
\usepackage{tikz}
%%%%%%%%%%%%%%%%%%%%%%%
\usepackage{accents}

\usepackage{pgfplots}
\pgfplotsset{compat=1.7}
\usetikzlibrary{arrows.meta}
\usepackage[numbers,sort&compress]{natbib}
\usepackage{appendix}
\DeclareMathAlphabet{\mathdutchcal}{U}{dutchcal}{m}{n}
\SetMathAlphabet{\mathdutchcal}{bold}{U}{dutchcal}{b}{n}
\DeclareMathAlphabet{\mathdutchbcal}{U}{dutchcal}{b}{n}
% --- NOTATIONS ----
% math

\newcommand{\dt}{\, \textup{d} t}
\newcommand{\ds}{\, \textup{d} s }
\newcommand{\dx}{\, \textup{d} x}

% parantheses

% colors

% Sets
\newcommand{\Z}{\mathbb{Z}} % integer
\newcommand{\R}{\mathbb{R}} % real
 % 
%complex
 % natural
 % 
%admissible set
% Spaces
\newcommand{\supp}{\mathrm{Supp}}
 % Hilbert space
 % Sobolev space
 % Sobolev space
 % Sobolev space
% Embedding constants

% formating
\newcounter{rownumber}

\newcommand{\vecc}[1]{\boldsymbol{#1}}

\usepackage{scalerel,stackengine}
\stackMath
\newcommand\reallywidehat[1]{%
\savestack{\tmpbox}{\stretchto{%
  \scaleto{%
    \scalerel*[\widthof{\ensuremath{#1}}]{\kern-.6pt\bigwedge\kern-.6pt}%
    {\rule[-\textheight/2]{1ex}{\textheight}}%WIDTH-LIMITED BIG WEDGE
  }{\textheight}% 
}{0.5ex}}%
\stackon[1pt]{#1}{\tmpbox}%
}
%\setlength{\oddsidemargin}{0mm}
%\setlength{\evensidemargin}{0mm} \setlength{\topmargin}{0mm}
%\setlength{\textheight}{220mm} \setlength{\textwidth}{155mm}
%%%%%%%%%%%%%%%%%%%%%%%%%%%%%
%    Theorem enviroments
%%%%%%%%%%%%%%%%%%%%%%%%%%%%%
\newtheorem{theorem}{Theorem}
\newtheorem{lemma}{Lemma}
\newtheorem{proposition}{Proposition}
\newtheorem*{assumption*}{Assumptions on the memory kernel}
\newtheorem{definition}{Definition}
\newtheorem{remark}{Remark}
\newtheorem{corollary}{Corollary}
\numberwithin{corollary}{section}
\numberwithin{lemma}{section}
\numberwithin{proposition}{section}
\numberwithin{theorem}{section}
\numberwithin{equation}{section}

\makeatletter
\newcommand{\leqnomode}{\tagsleft@true}
\newcommand{\reqnomode}{\tagsleft@false}
\makeatother
       
\title[The JMGT equation in Besov spaces]{Global existence for the Jordan--Moore--Gibson--Thompson equation in Besov spaces}   
\subjclass[2010]{35L75, 35G25}  

\keywords{nonlinear acoustics, JMGT equation, memory kernel, asymptotic behavior}   

\author[B. Said-Houari]{\bfseries  Belkacem Said-Houari}

\address{  
	Department of Mathematics\\ College of Sciences\\ University of
Sharjah, P. O. Box: 27272 \\ Sharjah, United Arab Emirates}
\email{bhouari@sharjah.ac.ae}

\begin{document}  
\vspace*{-4mm}       
\maketitle           
\vspace*{-4mm}  
\begin{center}  
	{\footnotesize
	Department of Mathematics, University of Sharjah, United Arab Emirates    
	}
\end{center}	
\vspace*{3mm}
\begin{abstract}    
In this paper,  we consider the Cauchy problem of a model in nonlinear acoustic, named the Jordan--Moore--Gibson--Thompson equation.   This 
equation arises   as an alternative model to the well-known
Kuznetsov equation in acoustics.
We prove global existence and optimal time decay of solutions   in Besov spaces  with a minimal regularity assumption on the initial data, lowering  the regularity assumption required in \cite{Racke_Said_2019} for the proof of the global existence. Using a time-weighted energy method with the help of appropriate Lyapunov-type estimates,  we also extend  the decay rate in    \cite{Racke_Said_2019} and show an optimal decay rate of the solution  for initial data in the  Besov  space $\dot{{B}}_{2,\infty}^{-3/2}(\mathbb{R}^3)$, which is larger than the Lebesgue   space $L^1(\R^3)$ due to the embedding $L^1(\mathbb{R}%
^3)\hookrightarrow \dot{{B}}_{2,\infty}^{-3/2}(\mathbb{R}^3)$. Hence we removed the $L^1$-assumption on the initial data required in \cite{Racke_Said_2019} in order to prove the decay estimates of the solution.                           
\end{abstract}                
\vspace*{2mm}       
                
 % \tableofcontents  
\section{Introduction}

Nonlinear wave propagation has been proved recently to be interesting in ultrasound imaging and in other medical applications, such as lithotripsy and thermotherapy. See for instance \cite{maresca2017nonlinear, he2018shared, melchor2019damage, duck2002nonlinear}. One of the classical models in nonlinear acoustics is the Kuznetsov equation     
\begin{equation}\label{Kuznt}
     	u_{tt}-c^{2}\Delta u-\nu\Delta u_{t}=\dfrac{\partial}{\partial t}\left( \dfrac{1}{c^{2}}\dfrac{B}{2A}(u_{t})^{2}+|\nabla u|^{2}\right), 
     	\end{equation}
	where $v(x,t)=-\nabla u(x,t)$ represents the acoustic velocity potential.  
	Equation \eqref{Kuznt} was initially introduced in \cite{kuznetso_1971} and  derived from a compressible nonlinear isentropic Navier--Stokes (for $\nu>0$) and Euler (for $\nu=0$) systems by assuming in the equation of the conservation of energy  that the heat flux obeys the   Fourier law of heat conduction:  
	\begin{equation}\label{Fourier}
     	\vecc{q}(x,t)=-K\nabla\theta(x,t),
     	\end{equation}
	where $K$ is the thermal conductivity of the material and $\theta$ is the absolute temperature. 
The interested reader is referred to the recent paper by 	Dekkers and Rozanova-Pierrat \cite{Dekkers_Rozanova-Pierrat_2019} where the authors investigated \eqref{Kuznt}  and proved a global existence result in the viscous case form small initial data of size roughly $\varepsilon \nu^{1/2}$, where  $\varepsilon$ is a small parameter.  On the other hand, for the inviscid case, they proved a blow-up result.

	It is well-known that using the Fourier law \eqref{Fourier} may lead to the  infinite signal speed paradox of the energy propagation; see \cite{Chand86,JLP89}. To overcome  this drawback  in the Fourier law, other equations were considered to model the heat transfer.  The Cattaneo (or the Maxwell--Cattaneo) law:  
     	\begin{equation}\label{Cattaneo}
     	\tau \vecc{q}_{t}(x,t)+\vecc{q}(x,t)=-K\nabla\theta(x,t),
     	\end{equation}
	leads to a finite speed of propagation. In \eqref{Cattaneo}
     	  $\tau$ is a small parameter known as  the relaxation time of the heat flux.   The use  of the Cattaneo law instead of the Fourier law in the governing equations of fluid dynamics leads to  the following third order ``in time" equation know as the Jordan--Moore--Gibson--Thompson equation: 
\begin{equation} \label{Main_problem} 
\left.
\begin{array}{ll}
\tau u_{ttt}+ u_{tt}-c^{2}\Delta u-\beta \Delta u_{t}=\dfrac{\partial }{%
\partial t}\left( \dfrac{1}{c^{2}}\dfrac{B}{2A}(u_{t})^{2}+|\nabla
u|^{2}\right) ,\vspace{0.2cm} \\
u(t=0)=u_{0},\qquad u_{t}(t=0)=u_{1}\qquad u_{tt}(t=0)=u_{2},
\end{array}
\right. 
\end{equation}%
where $x\in \Omega $ ($\Omega$ is either a bounded domain of $\R^n$ or $\Omega=\R^n$)  and $t>0$
, and where $\tau>0$ is a time relaxation parameter,  $c$ is the speed of sound,  $\beta$ is the parameter of diffusivity and $A$ and $B$ are the constants of nonlinearity. The reader is referred to the papers   \cite{jordan2008nonlinear,Kaltenbacher_2011} for the derivation of \eqref{Main_problem}. We also mention the recent result \cite{Kalten_Niko_2021}, where different time-fractional (J)MGT models have been derived by considering instead of \eqref{Cattaneo} a time-fractional heat-flux law.   

In recent years there has been a lot of work investigating the JMGT equation and its linearized version; the Moore--Gibson--Thompson (MGT) equation.    
The study of the controllability properties of the MGT type equations can be found, for instance in \cite{bucci2019feedback, Lizama_Zamorano_2019}.
The MGT equation in $\R^n$ with a power source nonlinearity of the form $|u|^p$  has been considered in \cite{Chen_Palmieri_1}  where some blow up results have been shown for the critical case $\tau c^2=\beta$.
The MGT and JMGT equations with a memory term have been also investigated recently. For the  MGT with memory, the reader is referred to  \cite{Bounadja_Said_2019,Liuetal._2019,dell2016moore} and to \cite{lasiecka2017global,nikolic2020mathematical,Nikolic_SaidHouari_2} for the JMGT with memory.  
%%%%%%%
The singular limit problem when $\tau\rightarrow 0$ has been rigorously justified  in \cite{KaltenbacherNikolic}. The authors in \cite{KaltenbacherNikolic} showed that in bounded domains,    the limit of \eqref{Main_problem}  as $\tau \rightarrow 0$ leads to the Kuznetsov equation \eqref{Kuznt}. 
    
For the nonlinear model \eqref{Main_problem} in a bounded domain, the first result seems to be the one of Kaltenbacher et \emph{al.} in \cite{KatLasPos_2012}. They investigated the equation \eqref{Main_problem} in its  abstract form and proved that the problem is well-posed for the linear equation with variable viscosity and positive diffusivity and showed the exponential decay of solutions of the linear   equation. After that they gave local and global well-posedness and exponential decay for the nonlinear equation in a certain range of the parameters and for small initial  data.
%%%%%%%%%%%%%%

The  Cauchy problem associated to the MGT equation was first  studied by  Pellicer  and the author of this paper in \cite{PellSaid_2019_1}, where we showed the well-posedness and investigated the decay rate of the solution    in the whole space $\R^{n}$.  We used the energy method in the Fourier space to show that under the assumption $\beta>c^{2}\tau$ the $L^{2}$-norm of the vector  $\mathbf{V}=(u_{t}+\tau u_{tt},\nabla(u+\tau u_{t}),\nabla u_{t}),$ and of its higher-order derivatives decay as 
        \begin{align}\label{marta decay}
        \Vert\nabla^{j}\mathbf{V}(t)\Vert _{L^{2}(\R^{n})}\leq C(1+t)^{-{n}/{4-{j}/{2}}}\Vert \mathbf{V}_{0}\Vert _{L^{1}(\R^{n})} +Ce^{-ct} \Vert\nabla^{j}\mathbf{V}_{0}\Vert _{L^{2}(\R^n)},
        \end{align} 
        with $\mathbf{V}_0=\mathbf{V}(t=0)\in H^s(\R^n)\cap L^1(\R^n)$ and $0\leq j\leq s$.  In addition, we proved the optimality of the decay rate in  \eqref{marta decay} by the eigenvalues expansion method. We also showed  the following asymptotic behavior of the $L^2$-norm of the solution: 
\begin{equation}\label{Decay_rate_u}
\begin{aligned}
\Vert u(t)\Vert_{L^2} \lesssim&\, (1+t)^{\frac{1}{2}-\frac{n}{4}},\qquad &n\geq 3\\
\Vert u(t)\Vert_{L^2} \lesssim&\, (1+t)^{1/2},\qquad &n=2. 
 \end{aligned}
\end{equation}
It is clear from \eqref{Decay_rate_u} that the $L^2$-norm of the solution either decays very slowly (for $n\geq 3$) or has a time growth (for $n=2$). This creates  a  difficulty in the nonlinear problem, since the integrability with respect to $t$ of the $L^2$-norm of the solution will be lost.  
         See also the recent paper by Chen and Ikehata \cite{Chen_Ikehata_2021}, where the authors proved the  optimal growth or optimal decay of the norm $\Vert u(t)\Vert_{L^2}$, depending on the space dimension.    
        
%%%%%%%%%%%%%%%%%%

 In \cite{Racke_Said_2019},  the author together with Racke  considered the JMGT problem \eqref{Main_problem} in the whole space  $\R^{3}$ (the 3D case) and showed a local existence result in appropriate function spaces by using the contraction mapping theorem and a global existence result for small data, by using the energy method together with some interpolation inequalities such as the classical Gagliardo--Nirenberg interpolation inequality. 
%  \begin{eqnarray*}
%\Vert f\Vert_{L^4}\leq 
%c\Vert f\Vert_{L^2}^{1/4}\Vert \nabla f\Vert_{L^2}^{3/4}. 
%\end{eqnarray*}
       We also proved  certain  decay rates for the solution for small initial data in $H^s(\R^3)\cap L^1(\R ^3) $, for $s>\frac{5}{2}=\frac{n}{2}+1.$ The proof of the global existence in \cite{Racke_Said_2019} does not use the linear decay of the solution. It is based on nonlinear energy estimates. 

Our goal in this paper is to consider system \eqref{Main_problem}  in the three dimensional case  
and  improve the global existence result and decay estimates in \cite{Racke_Said_2019} by reducing  the regularity  assumption on the initial data. In fact, we show  that for small initial data in some appropriate  Besov spaces,   system \eqref{Main_problem} has a unique global solution. These spaces considered here are larger than the Sobolev spaces in \cite{Racke_Said_2019} (see the discussion in Section \ref{Sec:Discussion}).  This of course affects the size of the initial data required for the global existence, since the larger the space is, the smaller the initial data are.  For instance,  in \cite[page 30]{bahouri2011fourier}  the authors  considered:    
$\phi_\varepsilon=e^{i\frac{x_1}{\varepsilon}}\phi(x)$ for some function $\phi$ in $\mathcal{S}(\R^n)$ and  showed  that $\Vert\phi_\varepsilon\Vert_{\dot{H}^s}\approx \varepsilon^{-s} $, $\Vert\phi_\varepsilon\Vert_{L^p} \approx 1$ and $\Vert\phi_\varepsilon\Vert_{\dot{B}^{-\sigma}_{p,q}(\R^n)}\approx \varepsilon^{\sigma}$. It is clear that $\dot{H}^{\frac{n}{2}-1}(\R^n) \hookrightarrow L^n(\R^n) \hookrightarrow \dot{B}^{-1+\frac{n}{p}}_{p,r}(\R^n),\, r\geq 2 $.  Hence,  for $\varepsilon$ small, $\phi_\varepsilon$ has a large norm in $\dot{H}^{\frac{n}{2}-1}(\R^n)$ but a small norm in the larger space $\dot{B}^{-1+\frac{n}{p}}_{p,r}(\R^n)$ if $ p>d$. For this reason it is always interesting to show small-data global existence in the  largest possible space. \\     
%%%%%%%%%%%%%
To prove the global existence result, we use  nonlinear 
energy-type estimates  by constructing an appropriate energy norm in some Besov spaces with critical regularity  and show that this norm
remains uniformly bounded with respect to time. We point out that in the proof of global existence result, we  
 do  not  use the linear decay,  which is a standard way of   proving small data existence for non-linear evolution equations. Our proof here is purely based on energy method.  
 In order  to prove  the decay rate for dissipative partial differential equations, it is quite  common to make smallness assumption on the $L^1$-norm of the initial data and combine it with the $L^2$-type energy estimates through Duhamel principle  to obtain large time decay estimates.   However, in many situations, it is difficult to work with the $L^1$-norm since it is not always possible to propagate the $L^1$-norm along the time evolution. So, it is of a great interest to prove decay estimates for initial data in $L^2$ based spaces that contain $L^1$.   In this paper and inspired by \cite{Sohinger_Strain_2014}, we prove  decay estimates for the solution of \eqref{Main_problem} for initial data in the homogeneous Besov spaces $\dot{{B}}_{2,\infty}^{-3/2}(\mathbb{R}^3)$, which contains $L^1(\R^3)$ (see Lemma \ref{Embedding_Lemma} below). 
%%%%%%%%%%%%%%%%%%%%%%

 The rest of this paper is organized as follows:  Section~\ref{section_prel} contains the necessary theoretical preliminaries, which allow us to rewrite the equation with the corresponding initial data as a  first-order Cauchy problem and define the main energy norm with the associated dissipative norm in appropriate Besov-type spaces.      In Section~\ref{Sect_Main_Result}, we state and discuss  our main result. In Section~\ref{Section_Global_Existence}, we derive the main energy estimate and  prove  the global existence result.  
 Section~\ref{Decay_Estimate_Linearized_Model} is dedicated to the proof of the decay estimates of the linearized problem, where we extend and improve the result in \cite{PellSaid_2019_1}.  In Section \ref{Section_6}, we prove the decay estimates of the nonlinear problem. 
 In Appendix \ref{Appendix_A}, we recall  the Littlewood--Paley decomposition theory and give the definition and some useful properties of the Besov spaces. 
 We also present a useful interpolation inequality in Besov spaces and other integral inequalities that  we used in the proofs.

\subsection{Notation} Throughout the paper, the constant $C$ denotes a generic positive constant
that does not depend on time, and can have different values on different occasions.
We often write $f \lesssim g$ where there exists a constant $C>0$, independent of parameters of interest such that   $f\leq C g$ and we analogously define $f\gtrsim g$. We sometimes use the notation $f\lesssim_\alpha g$ if we want to emphasize that the implicit constant depends on some parameter $\alpha$. The notation $f\approx g$ is used when there exists a constant $C>0$ such that $C^{-1}g\leq f\leq Cg$. We  define the operator $\Lambda^\gamma$ for $\gamma\in \R $ by 
\begin{equation}
\Lambda^\gamma f(x)=\int_{\R^3} |\xi|^\gamma \hat{f}(\xi) e^{2i\pi x\cdot \xi} \, \textup{d}\xi,
\end{equation}
where $\hat{f}$ is the Fourier transform of $f$.

\section{Preliminaries}

\label{section_prel} In this section, we  define the main energy norm with the associated dissipative norm in appropriate Besov-type spaces.  
 To do this and to state our main result on the global existence and asymptotic decay, we first rewrite equation \eqref{Main_problem} as a first-order in time system,
  which is more convenient for analysis. Hence, we first  introduce the new variables
\begin{equation}\label{Change_Variables}
v=u_{t}\qquad \text{ and }\qquad w=u_{tt},
\end{equation}%
and rewrite the right-hand side of the first equation in \eqref{Main_problem} in the form
\begin{equation}\label{MGT_1_2}
\frac{\partial }{\partial t}\left( \frac{1}{c^{2}}\frac{B}{2A}%
(u_{t})^{2}+|\nabla u|^{2}\right) =\frac{1}{c^{2}}\frac{B}{A}%
u_{t}u_{tt}+2\nabla u\nabla u_{t}. 
\end{equation}
Thus, taking into account \eqref{MGT_1_2} and \eqref{Change_Variables}, we recast \eqref{Main_problem} as (without loss of generality, we assume from now on $c=1$):
  \begin{subequations}\label{M_Main_System}
\begin{equation}
\left\{
\begin{array}{ll}
u_{t}=v,\vspace{0.1cm} &  \\
v_{t}=w,\vspace{0.1cm} &  \\
\tau w_{t}=\Delta u+\beta \Delta v-w+\dfrac{B}{A}vw+2\nabla u\nabla v , &
\end{array}%
\right.  \label{System_New}
\end{equation}
with the initial data
\begin{eqnarray}  \label{Initial_Condition_2}
u(t=0)=u_0,\qquad v(t=0)=v_0,\qquad w(t=0)=w_0.
\end{eqnarray}
\end{subequations}
Let $\mathbf{U}=(u,v,w)^T$ be the solution of \eqref{M_Main_System}. 
In order to state our main result, we introduce the energy norm, $\mathcal{E}[\mathbf{U}]
(t)$ and the corresponding dissipation norm $\mathcal{D}[\mathbf{U}](t)$, as
follows:%

\begin{equation}\label{Weighted_Energy}
\begin{aligned}
\mathcal{E}^2[\mathbf{U}](t):=&\,\sup_{0\leq \sigma\leq t}\Vert \mathbf{U}(\sigma)\Vert_{\mathcal{L}^2}^2
%\Big(\big\Vert (v+\tau
%w)(\sigma)\big\Vert _{L^{2}}^{2}+\big\Vert \nabla(v+\tau
%w)(\sigma)\big\Vert _{L^{2}}^{2}+\big\Vert \Delta  v(\sigma)\big\Vert
%_{L^{2}}^{2}\Big.%
%\vspace{0.2cm}  \notag \\
% \Big.&+\big\Vert \nabla v(\sigma)\big\Vert _{L^{2}}^{2}+\big\Vert \Delta (u+\tau v)(\sigma)\big\Vert
%_{L^{2}}^{2}+\big\Vert \nabla (u+\tau v)(\sigma)\big\Vert
%_{L^{2}}^{2}+\Vert w(\sigma)\Vert_{L^2}^2\Big)\\
\end{aligned}
\end{equation}
where 
\begin{equation}
\begin{aligned}
\Vert \mathbf{U}(t)\Vert_{\mathcal{L}^2}^2:=&\,\big\Vert (v+\tau
w)(t)\big\Vert _{L^{2}}^{2}+\big\Vert \nabla(v+\tau
w)(t)\big\Vert _{L^{2}}^{2}+\big\Vert \Delta  v(t)\big\Vert
_{L^{2}}^{2}\Big.%
\vspace{0.2cm}  \notag \\
 \Big.&+\big\Vert \nabla v(t)\big\Vert _{L^{2}}^{2}+\big\Vert \Delta (u+\tau v)(t)\big\Vert
_{L^{2}}^{2}+\big\Vert \nabla (u+\tau v)(t)\big\Vert
_{L^{2}}^{2}+\Vert w(t)\Vert_{L^2}^2.
\end{aligned}
\end{equation}
We also we define 
\begin{equation}\label{Dissipative_weighted_norm_1}
\mathcal{D}^{2}[\mathbf{U}](t) :=\int_0^t\Vert \mathbf{U}(t)\Vert_{\mathbb{L}^2}^2(\sigma)\textup{d}\sigma 
\end{equation}
with 
\begin{equation}
\begin{aligned}
\Vert \mathbf{U}(t)\Vert_{\mathbb{L}^2}^2:=&\,\big\Vert \nabla
v(t)\big\Vert _{L^{2}}^{2}+\big\Vert \Delta 
v(t)\big\Vert
_{L^{2}}^{2}+ \Vert w (t)\Vert_{L^2}^2   \\
&+\big\Vert \Delta \left( u+\tau v\right)(t) \big\Vert
_{L^{2}}^{2} + \big\Vert \nabla (v+\tau w)(t)\big\Vert _{L^{2}}^{2}.
\end{aligned}
\end{equation}
We also introduce 
\begin{equation}
\begin{aligned}
M_0(t):=\sup_{0\leq \sigma\leq t}&\Big(\left\Vert v(s)\right\Vert _{L^{\infty
}}+\left\Vert (v+\tau w)(\sigma)\right\Vert _{L^{\infty }}\Big.\\
\Big.+&\left\Vert \nabla
(u+\tau v)(\sigma)\right\Vert _{L^{\infty }}+\Vert \nabla u(\sigma)\Vert_{L^\infty}+\Vert \nabla v(\sigma)\Vert_{L^\infty}\Big).
\end{aligned}
\end{equation}
We define (see Appendix \ref{Appendix_A} for the definition and properties of Besov spaces)
\begin{equation}\label{M_Functional}
\begin{aligned}
M(t):=&\,\Vert \nabla^2 u\Vert_{\widetilde{L}_t^\infty(\dot{B}^{1/2}_{2,1})}+\Vert \nabla^2 u\Vert_{\widetilde{L}_t^\infty(\dot{B}^{3/2}_{2,1})}+\Vert v\Vert_{\widetilde{L}_t^\infty(\dot{B}^{1/2}_{2,1})}+\Vert \nabla v\Vert_{\widetilde{L}_t^\infty(\dot{B}^{1/2}_{2,1})}\\
&+\Vert \nabla^2 v\Vert_{\widetilde{L}_t^\infty(\dot{B}^{1/2}_{2,1})}+\Vert w\Vert_{\widetilde{L}_t^{\infty}(\dot{B}^{1/2}_{2,1})}+\Vert \nabla w\Vert_{\widetilde{L}_t^{\infty}(\dot{B}^{1/2}_{2,1})}. 
\end{aligned}
\end{equation}
We also define for $s\in \R$ the following Besov-type norms 
\begin{equation}
\begin{aligned}
\Vert \mathbf{U}(t)\Vert_{\dot{\mathbf{\mathcal{B}}}_{2,1}^{s}}^2:=&\,\big\Vert (v+\tau
w)(t)\big\Vert _{\dot{B}_{2,1}^s}^{2}+\big\Vert \nabla(v+\tau
w)(t)\big\Vert _{\dot{B}_{2,1}^s}^{2}+\big\Vert \Delta  v(t)\big\Vert
_{\dot{B}_{2,1}^s}^{2}
 \\
&+\big\Vert \nabla v(t)\big\Vert _{\dot{B}_{2,1}^s}^{2}+\big\Vert \Delta (u+\tau v)(t)\big\Vert
_{\dot{B}_{2,1}^s}^{2}+\big\Vert \nabla (u+\tau v)(t)\big\Vert
_{\dot{B}_{2,1}^s}^{2}+\Vert w(t)\Vert_{\dot{B}_{2,1}^s}^2,
\end{aligned}
\end{equation}
and the associated dissipation norm 
\begin{equation}
\begin{aligned}
\Vert \mathbf{U}(t)\Vert_{\dot{\mathbb{B}}_{2,1}^{s}}^2:=&\,\big\Vert \nabla
v(t)\big\Vert _{\dot{B}_{2,1}^{s}}^{2}+\big\Vert \Delta 
v(t)\big\Vert
_{\dot{B}_{2,1}^{s}}^{2}+ \Vert w (t)\Vert_{\dot{B}_{2,1}^{s}}^2   \\
&+\big\Vert \Delta \left( u+\tau v\right)(t) \big\Vert
_{\dot{B}_{2,1}^{s}}^{2} + \big\Vert \nabla (v+\tau w)(t)\big\Vert _{\dot{B}_{2,1}^{s}}^{2}.
\end{aligned}
\end{equation}
Finally, we define for $s>0$,  the inhomogeneous Besov-type norms as:  
\begin{equation}
\begin{aligned}
\Vert \mathbf{U}(t)\Vert_{\mathbf{\mathcal{B}}_{2,1}^{s}}:=&\,\Vert \mathbf{U}(t)\Vert_{\mathcal{L}^2}+\Vert \mathbf{U}(t)\Vert_{\dot{\mathbf{\mathcal{B}}}_{2,1}^{s}}\\
\Vert \mathbf{U}(t)\Vert_{\mathbb{B}_{2,1}^{s}}:=&\,\Vert \mathbf{U}(t)\Vert_{\mathbb{L}^2}+\Vert \mathbf{U}(t)\Vert_{\dot{\mathbb{B}}_{2,1}^{s}}. 
\end{aligned}
\end{equation}
The above norms are carefully designed and take advantages of the  good behavior of some linear combinations of the components of the vector  solution $\mathbf{U}=(u,v,w)^T$. These linear combinations are very important and they help to find some cancelation properties in system \eqref{M_Main_System}, which allow us to  exploit   the dissipation nature of system \eqref{M_Main_System}. Observe that the control of the norm $\Vert \mathbf{U}(t)\Vert_{\mathcal{L}^2} $ does not offer any control on  the norm $\Vert u\Vert_{L^2}$ due to the absence of the  Poincar\'e's inequality in the whole space $\R^n$. Hence, in the light of \eqref{Decay_rate_u}, this makes the proof of the nonlinear energy estimates more challenging since we need to avoid the use of the decay estimates of the linearized problem and as a consequence we do not assume the $L^1$ bound on the initial data.

We also define the following norms:
\begin{equation}
\begin{aligned}
\Vert \mathbf{ U}(t)\Vert_{\widetilde{L}_T^\infty(\mathbf{\mathcal{B}}_{2,1}^{s})}:=&\,\big\Vert (v+\tau
w)(t)\big\Vert _{\widetilde{L}_T^\infty(B_{2,1}^s)}+\big\Vert \nabla(v+\tau
w)(t)\big\Vert _{\widetilde{L}_T^\infty(B_{2,1}^s)}+\big\Vert \Delta  v(t)\big\Vert
_{\widetilde{L}_T^\infty(B_{2,1}^s)}
 \\
&+\big\Vert \nabla v(t)\big\Vert _{\widetilde{L}_T^\infty(B_{2,1}^s)}+\big\Vert \Delta (u+\tau v)(t)\big\Vert
_{\widetilde{L}_T^\infty(B_{2,1}^s)}+\big\Vert \nabla (u+\tau v)(t)\big\Vert
_{\widetilde{L}_T^\infty(B_{2,1}^s)}\\
&+\Vert w(t)\Vert_{\widetilde{L}_T^\infty(B_{2,1}^s)}
\end{aligned}
\end{equation}  
and 
\begin{equation}
\begin{aligned}
\Vert \mathbf{ U}(t)\Vert_{\widetilde{L}^2_T(\mathbb{B}_{2,1}^{s})}:=&\,\big\Vert \nabla
v(t)\big\Vert _{\widetilde{L}^2_T(B_{2,1}^{s})}+\big\Vert \Delta 
v(t)\big\Vert
_{\widetilde{L}^2_T(B_{2,1}^{s})}+ \Vert w (t)\Vert_{\widetilde{L}^2_T(B_{2,1}^{s})}   \\
&+\big\Vert \Delta \left( u+\tau v\right)(t) \big\Vert
_{\widetilde{L}^2_T(B_{2,1}^{s})} + \big\Vert \nabla (v+\tau w)(t)\big\Vert _{\widetilde{L}^2_T(B_{2,1}^{s})}.
\end{aligned}
\end{equation}
We define the spaces  $\widetilde{\mathcal{C}}(\mathbf{\mathcal{B}}_{2,1}^{s})$ and $\widetilde{\mathcal{C}}^1(\mathbf{\mathcal{B}}_{2,1}^{s})$  as in \eqref{C_1_T_Def} and \eqref{C_1_infty_Def}, where $B_{2,1}^{s}$ is replaced by $\mathcal{B}_{2,1}^{s}$.

\section{Main results}\label{Sect_Main_Result}
\noindent In this section, we state and discuss  our main results. The global existence result is stated in Theorem \ref{Main_Theorem_Nonl}, while the decay estimates are  given in Theorem \ref{Decay_Estimate_Theorem}. (We refer to Appendix \ref{Appendix_A} for the definition of Besov spaces that we use in the statement of the main theorems below).

\begin{theorem}
\label{Main_Theorem_Nonl} Assume that $0<\tau<\beta$.
   Assume that $%
u_{0},v_0\in B^{7/2}_{2,1}(\mathbb{R}^3) $ and $w_0\in B^{5/2}_{2,1}(\mathbb{R}^3)$. Then there exists a small positive
constant $\delta$ such that if
\begin{eqnarray*}
\Vert\mathbf{ U}_0\Vert_{\mathbf{\mathcal{B}}_{2,1}^{3/2}}\leq \delta,
\label{Initial_Assumption_Samll}
\end{eqnarray*}
 problem  \eqref{Main_problem} has a unique global solution  satisfying 
\begin{equation}
\mathbf{ U}\in \widetilde{\mathcal{C}}(\mathbf{\mathcal{B}}_{2,1}^{3/2}(\R^3))\cap \widetilde{\mathcal{C}}^1(\mathbf{\mathcal{B}}_{2,1}^{1/2}(\R^3)). 
\end{equation}
In addition, the following energy inequality holds: 
\begin{equation}\label{Main_Estimate_Theorem}
\begin{aligned}
&\Vert \mathbf{ U}(t)\Vert_{\widetilde{L}_T^\infty(\mathbf{\mathcal{B}}_{2,1}^{3/2})}+\Vert \mathbf{ U}(t)\Vert_{\widetilde{L}^2_T(\mathbb{B}_{2,1}^{3/2})}
\lesssim\Vert \mathbf{ U}_0\Vert_{\mathbf{\mathcal{B}}_{2,1}^{3/2}}. 
\end{aligned}
\end{equation}
 
\end{theorem}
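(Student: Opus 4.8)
The plan is to prove global existence by combining a local existence result for the first--order system \eqref{M_Main_System} in the Besov scale encoded by $\|\cdot\|_{\mathbf{\mathcal{B}}_{2,1}^{3/2}}$ (which amounts to $B^{7/2}_{2,1}$--regularity for $u,v$ and $B^{5/2}_{2,1}$--regularity for $w$) with a uniform-in-time a priori bound at that same critical level, and then closing the argument by a continuation/bootstrap scheme. For the local part I would pass to the good unknowns $v+\tau w$ and $u+\tau v$, for which \eqref{System_New} gives $(u+\tau v)_t=v+\tau w$, $(v+\tau w)_t=\Delta(u+\tau v)+(\beta-\tau)\Delta v+N$ with $N:=\tfrac{B}{A}vw+2\nabla u\cdot\nabla v$, and $v_t+\tfrac1\tau v=\tfrac1\tau(v+\tau w)$; the linear system is solved block by block by the Littlewood--Paley calculus of Appendix \ref{Appendix_A}, and since $N$ is a lower-order perturbation that, by the Besov product laws and the embedding $\dot B^{3/2}_{2,1}(\R^3)\hookrightarrow L^\infty(\R^3)$, is controllably Lipschitz on the solution space, a contraction-mapping argument yields a unique solution on a maximal interval $[0,T^*)$. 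The regularity assumed on $(u_0,v_0,w_0)$ is precisely what makes $\|\mathbf U_0\|_{\mathbf{\mathcal{B}}_{2,1}^{3/2}}$ finite, and the membership in $\widetilde{\mathcal{C}}^1(\mathbf{\mathcal{B}}_{2,1}^{1/2})$ then follows by reading off $\mathbf U_t$ from the equations.

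The heart of the matter is the a priori estimate \eqref{Main_Estimate_Theorem}. Assuming $\mathbf U$ solves \eqref{M_Main_System} on $[0,T]$ with $\sup_{[0,T]}\|\mathbf U(\sigma)\|_{\mathbf{\mathcal{B}}_{2,1}^{3/2}}$ small, I would apply $\dot\Delta_j$ to \eqref{System_New} and construct a Lyapunov functional $\mathcal L_j(t)$, equivalent to $\|\dot\Delta_j\mathbf U(t)\|_{\mathcal L^2}^2$, as a linear combination of the squared $L^2$--norms of $\dot\Delta_j$ applied to $v+\tau w$, $\nabla(v+\tau w)$, $\Delta v$, $\nabla v$, $\Delta(u+\tau v)$, $\nabla(u+\tau v)$, $w$, plus suitably weighted cross terms (of the type $\langle\dot\Delta_j(v+\tau w),-\Delta\dot\Delta_j(u+\tau v)\rangle$ and $\langle\dot\Delta_j v,\dot\Delta_j(v+\tau w)\rangle$) needed to manufacture the pieces of the dissipation that the plain energy identities do not give directly. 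Exploiting the identities above --- so that, $v$ being slaved to $v+\tau w$ through the relaxation ODE, the term $(\beta-\tau)\Delta v$ acts as an effective Kelvin--Voigt damping of the wave operator for $u+\tau v$ --- together with $0<\tau<\beta$, one should reach a differential inequality
\begin{equation*}
\frac{d}{dt}\mathcal L_j(t)+c\,\|\dot\Delta_j\mathbf U(t)\|_{\mathbb L^2}^2\lesssim \big(\text{contributions of }N\big)_j .
\end{equation*}
Integrating in time, taking square roots, multiplying by $2^{3j/2}$, summing over $j\in\Z$ in $\ell^1$ (the summation proper to $\dot B^{3/2}_{2,1}$) and adjoining the analogous plain $L^2$ energy estimate yields
\begin{equation*}
\|\mathbf U\|_{\widetilde{L}_T^\infty(\mathbf{\mathcal{B}}_{2,1}^{3/2})}+\|\mathbf U\|_{\widetilde{L}^2_T(\mathbb{B}_{2,1}^{3/2})}\lesssim \|\mathbf U_0\|_{\mathbf{\mathcal{B}}_{2,1}^{3/2}}+\big(\text{nonlinear terms}\big) .
\end{equation*}

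To close this I would estimate the nonlinear terms by the product and Moser laws in the Chemin--Lerner spaces $\widetilde{L}^\rho_T(\dot B^s_{2,1})$, together with Bernstein's inequality and the embedding $\dot B^{3/2}_{2,1}(\R^3)\hookrightarrow L^\infty(\R^3)$. The structural point that makes this work is that both factors of the quadratic nonlinearity $N$ can be placed, after using Bernstein, in the time-integrable dissipation norm $\widetilde{L}^2_T(\mathbb{B}_{2,1}^{3/2})$ --- indeed $N$ involves $w$ and $\nabla v$ explicitly, while $\nabla u=\nabla(u+\tau v)-\tau\nabla v$ is likewise controlled there; pairing $\|N\|_{\widetilde{L}^1_T(\cdot)}$ against an energy block in $\widetilde{L}_T^\infty$ then gives, after summation, a bound $\lesssim X(T)^2$, where $X(T):=\|\mathbf U\|_{\widetilde{L}_T^\infty(\mathbf{\mathcal{B}}_{2,1}^{3/2})}+\|\mathbf U\|_{\widetilde{L}^2_T(\mathbb{B}_{2,1}^{3/2})}$ (the auxiliary functionals $M_0(t)$ and $M(t)$ serving only to organize the intermediate $L^\infty$--bounds). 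Hence $X(T)\le C_0\|\mathbf U_0\|_{\mathbf{\mathcal{B}}_{2,1}^{3/2}}+C_1X(T)^2$, and choosing $\delta$ with $4C_0C_1\delta<1$ a standard continuity argument propagates $X(T)\le 2C_0\delta$ to every $T<T^*$; since this bound controls the quantities that drive the local theory, the blow-up alternative forces $T^*=+\infty$, which gives both the global solution and \eqref{Main_Estimate_Theorem}. Uniqueness follows from the same type of energy estimate applied to the difference of two solutions, performed one level of regularity lower, i.e.\ in $\mathbf{\mathcal{B}}_{2,1}^{1/2}$.

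The step I expect to be the main obstacle is the construction of the block Lyapunov functional $\mathcal L_j$: one must choose the (frequency-dependent) weights of the cross terms so that, after invoking $0<\tau<\beta$, the entire dissipation $\|\nabla v\|_{L^2}^2+\|\Delta v\|_{L^2}^2+\|w\|_{L^2}^2+\|\Delta(u+\tau v)\|_{L^2}^2+\|\nabla(v+\tau w)\|_{L^2}^2$ appears with a definite positive sign at each frequency while every remainder term is absorbed --- this is exactly where the relaxation structure and the condition $\tau<\beta$ are essential. A secondary technical difficulty is the bookkeeping in the Chemin--Lerner norms: one must verify that each nonlinear contribution genuinely keeps enough time-integrable (dissipative) factors so that, after the smallness hypothesis, it can be absorbed into the left-hand side rather than merely estimated by the energy.
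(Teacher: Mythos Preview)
Your proposal is correct and follows essentially the same route as the paper: frequency-localized energy estimates via $\dot\Delta_q$, a Lyapunov functional built from the good unknowns $v+\tau w$, $u+\tau v$ together with cross terms (the paper imports this structure from \cite{Racke_Said_2019}), commutator/product estimates in Chemin--Lerner spaces exploiting $\dot B^{3/2}_{2,1}(\R^3)\hookrightarrow L^\infty(\R^3)$, and a bootstrap via a Strauss-type continuity lemma. The only cosmetic difference is the exponent in the closing inequality: the paper organizes the nonlinear contributions so as to obtain $Y(t)\le CY(0)+CY(t)^{3/2}$ (one square-root factor of the energy times one full dissipation factor), whereas you write $X(T)\le C_0\|\mathbf U_0\|+C_1X(T)^2$; either superlinear form suffices for the continuation argument.
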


\begin{theorem}\label{Decay_Estimate_Theorem}
Let $\mathbf{U}(t,x)$ be the global solution given in Theorem \ref{Main_Theorem_Nonl}. Assume that $\mathbf{U}_0\in \mathbf{\mathcal{B}}_{2,1}^{3/2} $ and $\mathbf{V}_0=(u_1+\tau u_{2}, \nabla (u_0+\tau u_1), \nabla u_1)\in  \dot{B}_{2,\infty}^{-3/2}(\R^3) $.
Let 
\begin{equation}\label{E_0_ss}
\mathbbmss{E}_0=\Vert \mathbf{ U}_0\Vert_{\mathbf{\mathcal{B}}_{2,1}^{3/2}(\R^3)}+\Vert \mathbf{V}_0\Vert_ {\dot{B}_{2,\infty}^{-3/2}(\R^3)} 
\end{equation}
be sufficiently small. 
 Then, it holds that 
\begin{equation}
\Vert\Lambda^{\ell}\mathbf{V}(t)\Vert_{\mathbf{X}_1(\R^3)}\lesssim  \mathbbmss{E}_0 (1+t)^{-3/4-\ell/2 }
\end{equation}
for $0\leq \ell< 3/2 $, where $\mathbf{X}_1(\R^3)=B^{3/2-\ell}_{2,1}(\R^3)$ and $\mathbf{X}_1(\R^3)=\dot{B}^{0}_{2,1}(\R^3)$ if $\ell=3/2$. 

\noindent In addition the following decay estimates also hold:
\begin{equation}\label{Decay_w}
\Vert w(t)\Vert_{\dot{B}^{3/2}_{2,1}}\lesssim \mathbbmss{E}_0 (1+t)^{-3/2},\qquad \Vert w(t)\Vert_{\dot{B}^{0}_{2,1}}\lesssim \mathbbmss{E}_0 (1+t)^{-3/4}.
\end{equation}
 
\end{theorem}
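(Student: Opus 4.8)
The plan is to prove the decay estimates by combining the global existence bounds from Theorem \ref{Main_Theorem_Nonl} with the linear decay analysis of Section \ref{Decay_Estimate_Linearized_Model} through a time-weighted energy argument in the spirit of \cite{Sohinger_Strain_2014}. First I would set up a Duhamel representation for $\mathbf{V}=(u_t+\tau u_{tt},\nabla(u+\tau u_t),\nabla u_t)$, viewing the nonlinearity $N(u,v,w)=\tfrac{B}{A}vw+2\nabla u\nabla v$ as a forcing term. The linear decay estimate (the Besov-space refinement of \eqref{marta decay}) gives, for the semigroup $e^{t\mathcal{A}}$ associated with the linearized system, a bound of the form $\Vert\Lambda^\ell e^{t\mathcal{A}}\mathbf{V}_0\Vert_{B^{3/2-\ell}_{2,1}}\lesssim (1+t)^{-3/4-\ell/2}\Vert\mathbf{V}_0\Vert_{\dot B^{-3/2}_{2,\infty}}$ on low frequencies, together with exponential decay of the high-frequency part controlled by $\Vert\mathbf{U}_0\Vert_{\mathcal{B}^{3/2}_{2,1}}$. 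The embedding $L^1(\R^3)\hookrightarrow\dot B^{-3/2}_{2,\infty}(\R^3)$ (Lemma \ref{Embedding_Lemma}) is what lets us phrase the hypothesis in the larger Besov space.

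Next I would introduce a time-weighted norm, something like
\begin{equation}
\mathbbmss{M}(t):=\sup_{0\le\sigma\le t}\ \sup_{0\le\ell\le 3/2}\ (1+\sigma)^{3/4+\ell/2}\,\Vert\Lambda^\ell\mathbf{V}(\sigma)\Vert_{\mathbf{X}_1},
\end{equation}
and derive a closed inequality $\mathbbmss{M}(t)\lesssim\mathbbmss{E}_0+\mathbbmss{M}(t)^2$. The linear part contributes the $\mathbbmss{E}_0$ term directly. For the Duhamel integral $\int_0^t\Lambda^\ell e^{(t-\sigma)\mathcal{A}}N(\sigma)\,d\sigma$ I would split at $\sigma=t/2$: on $[0,t/2]$ use the decay of the semigroup against the low-regularity norm of $N$, and on $[t/2,t]$ put fewer derivatives on the semigroup and absorb regularity into $N$, using the global energy bound $M(t)\lesssim\mathbbmss{E}_0$ from Theorem \ref{Main_Theorem_Nonl} to control the top-order pieces. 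The nonlinear terms are quadratic, so the products $vw$ and $\nabla u\,\nabla v$ are estimated by Besov product laws (the paracomposition/Bony decomposition estimates recalled in Appendix \ref{Appendix_A}); each factor is assigned a decay rate coming from $\mathbbmss{M}(\sigma)$ or from the uniform-in-time energy norm, and one checks that the resulting exponent of $(1+\sigma)$ is integrable near $\sigma=t/2$ and that the boundary behavior near $\sigma=t$ matches the target rate $(1+t)^{-3/4-\ell/2}$. A standard continuity/bootstrap argument then yields $\mathbbmss{M}(t)\lesssim\mathbbmss{E}_0$ for $\mathbbmss{E}_0$ small. The estimates \eqref{Decay_w} for $w$ in $\dot B^{3/2}_{2,1}$ and $\dot B^0_{2,1}$ follow along the same lines, exploiting that $w=u_{tt}$ sits in the dissipative norm with an extra decay factor (the parabolic-type smoothing in the $w$-component of the linearized system gives the improved rate $(1+t)^{-3/2}$ for the top norm), again combined with the nonlinear Duhamel estimate.

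The main obstacle I expect is the $[t/2,t]$ portion of the Duhamel integral together with the fact, emphasized in the introduction, that $\Vert u(t)\Vert_{L^2}$ does not decay (indeed may grow), so one cannot simply bound $\nabla u$ by a decaying quantity without care. The resolution is to never use the undifferentiated $u$: all nonlinear contributions involve $\nabla u$, $v$, $w$ or higher derivatives, each of which does decay, and the weights must be distributed so that the slowly-decaying low-frequency component of $\nabla u$ (which decays only like $(1+t)^{-3/4}$, the borderline rate) is always paired with a factor carrying enough surplus decay to close. Verifying that the exponent bookkeeping works out — in particular that the critical regularity index $3/2$ and the decay index $-3/2$ are exactly compatible so that no logarithmic loss appears — is the delicate computational heart of the argument, and is where the specific choice of the linear combinations in the energy norm $\Vert\cdot\Vert_{\mathcal{L}^2}$ and the $\dot B^{-3/2}_{2,\infty}$ hypothesis on $\mathbf{V}_0$ must be used in an essential way.
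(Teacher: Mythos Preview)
Your overall architecture --- Duhamel representation, a time-weighted functional, and a quadratic bootstrap inequality closed via smallness of $\mathbbmss{E}_0$ --- is exactly the paper's strategy. Two points of comparison: the paper does not split the Duhamel integral at $t/2$ but instead uses the elementary convolution inequality $\int_0^t(1+t-r)^{-a}(1+r)^{-b}\,\textup{d}r\lesssim(1+t)^{-\min(a,b)}$ for $\max(a,b)>1$ (Lemma~\ref{Lemma_Segel}), which accomplishes the same bookkeeping; and the paper's closed inequality has the form $\mathcal{M}(t)\lesssim\mathbbmss{E}_0+\vecc{E}(t)\mathcal{M}(t)+\mathcal{M}^2(t)$ with a linear-in-$\mathcal{M}$ term absorbed using the global bound $\vecc{E}(t)\lesssim\mathbbmss{E}_0$ from Theorem~\ref{Main_Theorem_Nonl}. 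These are cosmetic differences.

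There is, however, a genuine gap in your treatment of the $w$-estimates. Your functional $\mathbbmss{M}(t)$ only tracks $\Vert\Lambda^\ell\mathbf{V}\Vert$ for $0\le\ell\le 3/2$, which controls $\mathbf{V}$ up to $\dot{B}^{3/2}_{2,1}$. But $w$ is \emph{not} a component of $\mathbf{V}$ (only $v+\tau w$ and $\nabla v$ are), so its decay does not ``follow along the same lines'' from the Duhamel formula for $\mathbf{V}$. The paper instead treats the third equation $\tau w_t+w=\Delta u+\beta\Delta v+N$ as a damped ODE in each dyadic block, obtaining
\[
\Vert w(t)\Vert_{\dot{B}^{3/2}_{2,1}}\lesssim e^{-t/\tau}\Vert w_0\Vert_{\dot{B}^{3/2}_{2,1}}+\int_0^t e^{-(t-r)/\tau}\bigl(\Vert\mathbf{V}(r)\Vert_{\dot{B}^{5/2}_{2,1}}+\Vert N(r)\Vert_{\dot{B}^{3/2}_{2,1}}\bigr)\,\textup{d}r,
\]
since $\Delta(u+\tau v),\Delta v$ in $\dot{B}^{3/2}_{2,1}$ require $\nabla\mathbf{V}$ in $\dot{B}^{3/2}_{2,1}$, i.e.\ $\mathbf{V}$ in $\dot{B}^{5/2}_{2,1}$. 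Consequently the paper's time-weighted functional $\mathcal{M}(t)$ must (and does) include the extra piece $\sup_r(1+r)^{3/2}\Vert\mathbf{V}(r)\Vert_{\dot{B}^{5/2}_{2,1}}$, together with the $w$-norms themselves, and a separate step (Step~3 in the paper) is devoted to propagating this higher-regularity decay through the Duhamel formula. Without enlarging $\mathbbmss{M}(t)$ in this way your bootstrap for \eqref{Decay_w} cannot close: the forcing in the $w$-ODE lives one derivative above the top index you carry.
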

As we will see the proof of Theorems \ref{Main_Theorem_Nonl} relies on nonlinear energy estimates, whereas to prove Theorem \ref{Decay_Estimate_Theorem}, we prove first a decay estimate for the linearized problem and then represent the solution in an integral from via the frequency localization  Duhamel principle (see \cite{Xu_Kawashima_2015} and \cite{Kawashima_1} for similar ideas).  
The proof of Theorem \ref{Main_Theorem_Nonl} is a consequence of Proposition \ref{Main_Proposition} and the bootstrap argument. The proof of Theorem \ref{Main_Theorem_Nonl} is given in Section \ref{Proof_Theorem_1} while the one of Theorem  \ref{Decay_Estimate_Theorem} is given in Section \ref{Sec:Proof_Theorem_2}. The proof of a local well-posedness result can be done by using similar methods  as in \cite[Theorem 1.2]{Racke_Said_2019}; i.e., by combining a fixed point argument together with the appropriate a priori estimates derived in Section \ref{Section_Global_Existence}.  We omit it here.   
\subsection{Discussion of the main result}\label{Sec:Discussion}
 Before moving onto the proof, we briefly discuss the statements made above in Theorems \ref{Main_Theorem_Nonl} and \ref{Decay_Estimate_Theorem}.

\begin{enumerate}[label=\arabic*.,ref=.\arabic*]
\item  The global existence result in \cite{Racke_Said_2019} requires $u_0, v_0\in H^{\frac{7}{2}+}$ and $w_0\in  H^{\frac{5}{2}+}$. Since $H^{s_0+}=B_{2,2}^{s_0+}\hookrightarrow B_{2,1}^{s_0}$ (see \cite[Proposition 2.3]{Yoshihiro_2018}),  the global existence result stated in Theorem \ref{Main_Theorem_Nonl}   improves \cite{Racke_Said_2019} for the minimal possible regularity. As we said in the introduction, this means  initial data which are large in the small space $H^{s_0+}$ might be  small in $B_{2,1}^{s_0+}$.  (Here $H^{s+}=H^{\delta}, B_{p,q}^{s+}=B_{p,q}^{\delta}$ for any  $\delta>s$). 
 In the proof of the main result, we took advantages of the embedding $\dot{B}^{n/p}_{p,1}(\R^n) \hookrightarrow L^\infty(\R^n)$. Such embedding fails if we replace the Besov space $\dot{B}^{n/p}_{p,1}(\R^n)$ by the homogeneous Sobolev space $\dot{H}^{n/p}(\R^n)$. 
\item By using the interpolation:
\begin{equation}
\Vert w(t)\Vert_{\dot{B}^{\sigma}_{2,1}} \lesssim \Vert w(t)\Vert_{\dot{B}^{3/2}_{2,1}}^{\frac{2}{3}\sigma} \Vert w(t)\Vert_{\dot{B}^{0}_{2,1}}^{1-\frac{2}{3}\sigma},
\end{equation}
 we have from \eqref{Decay_w} 
 \begin{equation}
\Vert w(t)\Vert_{\dot{B}^{\sigma}_{2,1}}\lesssim \mathbbmss{E}_0 (1+t)^{-\frac{\sigma}{2}-\frac{3}{4}}. 
\end{equation}
for $ 0\leq \sigma\leq  3/2$.
\item The decay estimates obtained in \cite{Racke_Said_2019} hold under the  assumption that the initial data $\mathbf{V}_0\in L^1(\R^3)$. Theorem~\ref{Decay_Estimate_Theorem} does not require the initial data to be in $L^1(\R^3)$. In fact, we assume that the initial data $\mathbf{V}_0$ to be in the Besov space $ \dot{B}_{2,\infty}^{-3/2}(\R^3) $ which satisfies $L^1(\mathbb{R}%
^3)\hookrightarrow \dot{{B}}_{2,\infty}^{-3/2}(\mathbb{R}^3)$.
\item In this paper, although we restrict ourselves to the  3D case,     the proof of the main results can be modified to include the general spaces dimension $\R^n,\, n\geq 3$. The case $n=2$ would require  new estimates. 
\item In Theorem \ref{Main_Theorem_Nonl},  since we do not use the decay estimates of the linearized problem, we do not assume the extra $ \mathbf{V}_0\in\dot{B}_{2,\infty}^{-3/2}(\R^3)$ for the proof of the global existence result.         
\end{enumerate}

\section{Energy estimates}\label{Section_Global_Existence}

The following proposition is crucial in the proof of Theorem \ref{Main_Theorem_Nonl}.  
\begin{proposition}\label{Main_Proposition}
Assume that $0<\tau<\beta$. Then  the following estimate holds: 
\begin{equation}\label{Main_inHomogenous_Estimate_1}
\begin{aligned}
&\Vert \mathbf{U}(t)\Vert_{\widetilde{L}_t^\infty(\mathbf{\mathcal{B}}_{2,1}^{3/2})}+\Vert \mathbf{U}(t)\Vert_{\widetilde{L}^2_t(\mathbb{B}_{2,1}^{3/2})}\\
\lesssim&\,\Vert \mathbf{U}(0)\Vert_{\mathbf{\mathcal{B}}_{2,1}^{3/2}}+\sqrt{\Vert \mathbf{U}(t)\Vert_{\widetilde{L}_t^\infty(\mathbf{\mathcal{B}}_{2,1}^{3/2})}}\Vert \mathbf{U}(t)\Vert_{\widetilde{L}^2_t(\mathbb{B}_{2,1}^{3/2})}. 
\end{aligned}
\end{equation}
\end{proposition}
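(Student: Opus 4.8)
The plan is to establish \eqref{Main_inHomogenous_Estimate_1} by a dyadic energy argument in the Chemin--Lerner spaces. First I would apply the homogeneous Littlewood--Paley blocks $\dot{\Delta}_j$ (dealing with the low frequencies separately, so as to recover the inhomogeneous norms $\mathbf{\mathcal{B}}_{2,1}^{3/2}$ and $\mathbb{B}_{2,1}^{3/2}$) to the system \eqref{System_New}, setting $\mathbf{U}_j=(\dot{\Delta}_j u,\dot{\Delta}_j v,\dot{\Delta}_j w)^T$; since the nonlinearity $\mathcal{N}:=\tfrac{B}{A}vw+2\,\nabla u\cdot\nabla v=\partial_t\!\big(\tfrac{B}{2A}v^2+|\nabla u|^2\big)$ is semilinear, $\dot{\Delta}_j$ produces no commutator, only the forcing $\dot{\Delta}_j\mathcal{N}$ in the third equation. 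Then, at each fixed $j$, I would combine the localized equations --- suitably weighted --- so as to produce the time derivative of each of the seven terms of $\|\mathbf{U}_j\|_{\mathcal{L}^2}^2$, and add a few small cross terms (typically $\langle w_j,(v+\tau w)_j\rangle$ and $\langle\nabla v_j,\nabla(v+\tau w)_j\rangle$) whose derivatives feed the dissipative directions missing from the bare energy identities. This is exactly the Fourier-space computation of \cite{PellSaid_2019_1} performed block by block; writing $\phi=u+\tau v$, $z=v+\tau w$, so that $\phi_t=z$ and $z_t=\Delta\phi+(\beta-\tau)\Delta v+\mathcal{N}$, the hypothesis $0<\tau<\beta$ makes the resulting localized quadratic form positive and yields a Lyapunov functional $\mathcal{L}_j(t)\approx\|\mathbf{U}_j(t)\|_{\mathcal{L}^2}^2$ with
\begin{equation*}
\frac{d}{dt}\mathcal{L}_j(t)+c\,\|\mathbf{U}_j(t)\|_{\mathbb{L}^2}^2\ \le\ \big|\big\langle\dot{\Delta}_j\mathcal{N}(t),\ \mathbf{m}_j(t)\big\rangle\big|,
\end{equation*}
where $\mathbf{m}_j$ ranges over the finitely many multipliers used above.

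Next I would integrate this inequality on $[0,t]$, use $\mathcal{L}_j\ge0$ to retain both $\sup_{[0,t]}\mathcal{L}_j$ and $c\int_0^t\|\mathbf{U}_j\|_{\mathbb{L}^2}^2\,d\sigma$, take square roots via $\sqrt{a+b}\lesssim\sqrt{a}+\sqrt{b}$, multiply by $2^{3j/2}$, and sum over $j$ (together with the elementary $L^2$ estimate for the low-frequency block). Since $\mathcal{L}_j\approx\|\mathbf{U}_j\|_{\mathcal{L}^2}^2$, this gives
\begin{equation*}
\|\mathbf{U}\|_{\widetilde{L}_t^\infty(\mathbf{\mathcal{B}}_{2,1}^{3/2})}+\|\mathbf{U}\|_{\widetilde{L}^2_t(\mathbb{B}_{2,1}^{3/2})}\ \lesssim\ \|\mathbf{U}(0)\|_{\mathbf{\mathcal{B}}_{2,1}^{3/2}}+\sum_j 2^{3j/2}\Big(\int_0^t\big|\langle\dot{\Delta}_j\mathcal{N},\mathbf{m}_j\rangle\big|\,d\sigma\Big)^{1/2},
\end{equation*}
so the whole estimate reduces to bounding the last sum by $\sqrt{\|\mathbf{U}\|_{\widetilde{L}_t^\infty(\mathbf{\mathcal{B}}_{2,1}^{3/2})}}\,\|\mathbf{U}\|_{\widetilde{L}^2_t(\mathbb{B}_{2,1}^{3/2})}$. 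For the multipliers $\mathbf{m}_j\in\{w_j,\nabla v_j,\Delta v_j,\Delta(u+\tau v)_j,\nabla(v+\tau w)_j\}$ --- exactly those whose $\widetilde{L}^2_t(\dot{B}^{3/2}_{2,1})$-norm is controlled by $D:=\|\mathbf{U}\|_{\widetilde{L}^2_t(\mathbb{B}_{2,1}^{3/2})}$ --- I would estimate $\int_0^t|\langle\dot{\Delta}_j\mathcal{N},\mathbf{m}_j\rangle|\,d\sigma\le\|\dot{\Delta}_j\mathcal{N}\|_{L^2_tL^2_x}\|\mathbf{m}_j\|_{L^2_tL^2_x}$ and use Cauchy--Schwarz in $j$ to obtain the bound $\|\mathcal{N}\|_{\widetilde{L}^2_t(\dot{B}^{3/2}_{2,1})}^{1/2}D^{1/2}$; it then suffices to prove the bilinear estimate $\|\mathcal{N}\|_{\widetilde{L}^2_t(\dot{B}^{3/2}_{2,1})}\lesssim\|\mathbf{U}\|_{\widetilde{L}_t^\infty(\mathbf{\mathcal{B}}_{2,1}^{3/2})}\,D$, which produces precisely a $\sqrt{\,\cdot\,}\,D$ contribution.

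The bilinear estimate is the heart of the argument and where the design of the two norms pays off. Via Bony's decomposition and the Chemin--Lerner product/paraproduct laws of Appendix \ref{Appendix_A}, I would split $vw$ and $\nabla u\cdot\nabla v$ so that in each piece one factor sits in $L^\infty$ through the embedding $\dot{B}^{3/2}_{2,1}(\R^3)\hookrightarrow L^\infty(\R^3)$ and the other in $\widetilde{L}^2_t(\dot{B}^{3/2}_{2,1})$; the key point is that $\mathcal{N}$ only involves $v$, $w$, $\nabla u$, $\nabla v$, all controlled by $\|\mathbf{U}\|_{\widetilde{L}_t^\infty(\mathbf{\mathcal{B}}_{2,1}^{3/2})}$ via $v=(v+\tau w)-\tau w$ and $\nabla u=\nabla(u+\tau v)-\tau\nabla v$ --- so no norm of $u$ itself (which the energy does not see) is ever needed --- while $w$ and $\nabla v$ are also controlled by $D$. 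I expect the main obstacle to be the contributions where $\mathcal{N}$ must be tested against the \emph{undamped} multipliers $(v+\tau w)_j$ and $\nabla(u+\tau v)_j$, whose norms are absent from $\mathbb{B}_{2,1}^{3/2}$: for these I would exploit $\mathcal{N}=\partial_t(\tfrac{B}{2A}v^2+|\nabla u|^2)$ to integrate by parts in time, absorb the boundary term (cubic, hence of size $\lesssim M_0(t)\|\mathbf{U}_j\|_{\mathcal{L}^2}^2$) into a modified Lyapunov functional, and reduce the remaining integral to pairings against the dissipated quantities $\Delta(u+\tau v)$, $\Delta v$ and $\mathcal{N}$; a low/high frequency split, using $\widetilde{L}_t^\infty$-energy control (and the auxiliary functionals $M(t),M_0(t)$) at low frequency, where the product structure already forces smallness via Bernstein's inequality, then closes these terms, up to further cubic remainders of the form $\|\mathbf{U}\|_{\widetilde{L}_t^\infty(\mathbf{\mathcal{B}}_{2,1}^{3/2})}^{3/2}$ that are harmlessly absorbed into the left-hand side. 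The laborious part is the bookkeeping: checking positivity of the localized quadratic form under $0<\tau<\beta$ and matching the Besov indices in each paraproduct; the rest is routine.
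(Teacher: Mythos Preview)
Your overall architecture --- localize by $\dot{\Delta}_j$, build a block-level Lyapunov functional equivalent to $\|\mathbf U_j\|_{\mathcal L^2}^2$ under $0<\tau<\beta$, integrate in time, take square roots, multiply by $2^{3j/2}$ and sum --- is exactly what the paper does (see \eqref{Main_Freq_Energy}--\eqref{Main_Homogenous_Estimate}), and your treatment of the ``dissipated'' multipliers $\mathbf m_j\in\{w_j,\nabla v_j,\Delta v_j,\Delta(u+\tau v)_j,\nabla(v+\tau w)_j\}$ via $\|\dot{\Delta}_j\mathcal N\|_{L^2_tL^2}\,\|\mathbf m_j\|_{L^2_tL^2}$ and a Chemin--Lerner bilinear bound is essentially the content of Lemmas \ref{Lemma_I_2} and \ref{Lemma_I_3}. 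The paper, however, does not estimate $\dot{\Delta}_j\mathcal N$ as a black box through Bony paraproducts: it first writes the commutator decomposition $\dot{\Delta}_q(vw)=[\dot{\Delta}_q,v]w+v\,\dot{\Delta}_q w$ (and likewise for $\nabla u\cdot\nabla v$), so that one piece carries the summable factor $c_q 2^{-3q/2}$ via the commutator lemma \eqref{Commu_A_1}--\eqref{Commutator_2}, while the other piece keeps a pointwise $\|v\|_{L^\infty}$ or $\|\nabla u\|_{L^\infty}$ in front of a localized dissipated quantity. This is a bookkeeping choice, but it is what makes the sum over $q$ transparent.

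The genuine divergence is your handling of the undamped pairing $\langle\dot{\Delta}_j\mathcal N,(v+\tau w)_j\rangle$. The paper does \emph{not} integrate by parts in time. In Lemma \ref{Lemma_I_1} it splits $\mathrm R_q$ as above and then, for the troublesome pieces like $\int|v\,\dot{\Delta}_q w||\dot{\Delta}_q v|$ and $\int|\nabla u\,\dot{\Delta}_q\nabla v||\dot{\Delta}_q v|$, it uses the $L^3\times L^6\times L^2$ H\"older inequality together with the Gagliardo--Nirenberg bound \eqref{Estimate_f_N_3} and the Sobolev embedding $\dot H^1(\R^3)\hookrightarrow L^6(\R^3)$, so that $\|\dot{\Delta}_q v\|_{L^6}\lesssim\|\nabla\dot{\Delta}_q v\|_{L^2}$: the undamped block factor is traded, at fixed frequency, for a dissipated one. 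This gives exactly the form $\sqrt{E}\,D$ with no leftover $E^{3/2}$ remainder, and hence the precise inequality \eqref{Main_inHomogenous_Estimate_1}. Your time-IBP route, by contrast, produces the cubic boundary term $\langle\dot{\Delta}_jF(t),(v+\tau w)_j(t)\rangle$; absorbing it into a ``modified Lyapunov functional'' at the block level is delicate (there is no reason $\|\dot{\Delta}_j F\|_{L^2}$ should be dominated by $\|\mathbf U_j\|_{\mathcal L^2}$ uniformly in $j$), and even if it closes you end up with additional $\|\mathbf U\|_{\widetilde L^\infty_t(\mathcal B_{2,1}^{3/2})}^{3/2}$ terms on the right. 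That is harmless for Theorem \ref{Main_Theorem_Nonl}, but it is not the estimate stated in the proposition. The paper's Sobolev-embedding trick is both simpler and sharper here; I would replace your time-IBP step with it.
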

The proof of Proposition \ref{Main_Proposition} will be given through several lemmas and uses some ideas and estimates from the work \cite{Racke_Said_2019}. 
First, we recall  the following estimate which has been proved in \cite[Estimate (2.39)]{Racke_Said_2019}. 
\begin{lemma}[\cite{Racke_Said_2019}] Assume that $0<\tau<\beta$. Then, the following estimate holds: \begin{eqnarray}  \label{Main_Estimate_D_0_1}
\mathcal{E}^2[\mathbf{U}](t)+\mathcal{D}^2[\mathbf{U}](t)\leq \mathcal{E}^2[\mathbf{U}](0) + C\big(\mathcal{E}%
[\mathbf{U}](t)+M_0(t)\big)\mathcal{D}^2[\mathbf{U}](t),\end{eqnarray} 
uniformly with respect to $t$. 
\end{lemma}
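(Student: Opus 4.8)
The plan is to prove \eqref{Main_Estimate_D_0_1} by a Lyapunov-type energy argument carried out directly in $L^{2}(\R^{3})$ on the first-order system \eqref{System_New}. First I would introduce the combined unknowns $\phi:=u+\tau v$ and $z:=v+\tau w$, for which $\phi_{t}=z$ and, after substituting $u=\phi-\tau v$ in the third equation of \eqref{System_New},
\[
z_{t}=\Delta\phi+(\beta-\tau)\Delta v+\mathcal N,\qquad \mathcal N:=\tfrac{B}{A}vw+2\,\nabla u\cdot\nabla v .
\]
Since $0<\tau<\beta$ the coefficient $\beta-\tau$ is positive and is the source of the dissipation. The reason for these unknowns is that $\Vert z\Vert_{L^{2}}$, $\Vert\nabla\phi\Vert_{L^{2}}$, $\Vert\nabla v\Vert_{L^{2}}$ (and one more derivative of each) are exactly the pieces of $\Vert\mathbf U\Vert_{\mathcal L^{2}}$ that do \emph{not} appear in the dissipation norm $\Vert\mathbf U\Vert_{\mathbb L^{2}}$; keeping these undifferentiated quantities out of the dissipation is what makes an $L^{2}$ estimate possible on $\R^{3}$, where Poincaré's inequality is unavailable.

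Next I would generate the five terms of $\Vert\mathbf U\Vert_{\mathbb L^{2}}^{2}$ from five multiplier identities, each also applied after acting with $\Lambda^{k}$, $k=0,1$. Multiplying the $z$-equation by $z$ and integrating by parts gives
\[
\tfrac12\tfrac{d}{dt}\big(\Vert z\Vert_{L^{2}}^{2}+\Vert\nabla\phi\Vert_{L^{2}}^{2}+\tau(\beta-\tau)\Vert\nabla v\Vert_{L^{2}}^{2}\big)+(\beta-\tau)\Vert\nabla v\Vert_{L^{2}}^{2}=\int_{\R^{3}}\mathcal N\,z ,
\]
and its $\Lambda$-differentiated version yields $\Vert\Delta v\Vert_{L^{2}}^{2}$-dissipation. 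Multiplying the third equation by $w$ gives $\tfrac{d}{dt}\big(\tfrac{\tau}{2}\Vert w\Vert_{L^{2}}^{2}-\int\Delta u\,v+\tfrac{\beta}{2}\Vert\nabla v\Vert_{L^{2}}^{2}\big)+\Vert w\Vert_{L^{2}}^{2}=\Vert\nabla v\Vert_{L^{2}}^{2}+\int\mathcal N\,w$; multiplying $\nabla$ of the third equation by $\nabla w$ produces $\Vert\nabla w\Vert_{L^{2}}^{2}$-dissipation, which combined with the first identity controls $\Vert\nabla(v+\tau w)\Vert_{L^{2}}^{2}$; and pairing $\nabla z_{t}$ with $\nabla\phi$ gives $\tfrac{d}{dt}\int\nabla z\cdot\nabla\phi+\Vert\Delta\phi\Vert_{L^{2}}^{2}=\Vert\nabla z\Vert_{L^{2}}^{2}-(\beta-\tau)\int\Delta v\,\Delta\phi+\int\nabla\mathcal N\cdot\nabla\phi$, whose remaining linear term is Young-absorbed into the already-controlled $\Vert\Delta v\Vert^{2}$ and $\Vert\Delta(u+\tau v)\Vert^{2}$. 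I would then combine the left-hand side functionals of these identities into a single $\mathcal L(t)$ with a hierarchy of constants $N_{1}\gg N_{2},N_{3}\gg N_{4}>0$: choosing $N_{1}$ large absorbs the correction terms $\Vert\nabla v\Vert^{2}$, $\Vert\Delta v\Vert^{2}$, $\Vert\nabla z\Vert^{2}$ on the right, and the indefinite cross terms ($\int\Delta u\,v$, $\int\Delta u\,\Delta v$, $\int\nabla z\cdot\nabla\phi$) are Young-dominated by the definite part carrying $N_{1}$, so that $\mathcal L(t)\approx\Vert\mathbf U(t)\Vert_{\mathcal L^{2}}^{2}$ and
\[
\frac{d}{dt}\mathcal L(t)+c_{0}\,\Vert\mathbf U(t)\Vert_{\mathbb L^{2}}^{2}\ \lesssim\ \sum_{k\le 1}\Big|\int_{\R^{3}}\Lambda^{k}\mathcal N\cdot\Lambda^{k}(\text{a component of }\mathbf U)\Big| .
\]

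I would then integrate in $t$, use $\mathcal L(0)\lesssim\mathcal E^{2}[\mathbf U](0)$ and $\mathcal L(t)\gtrsim\Vert\mathbf U(t)\Vert_{\mathcal L^{2}}^{2}\ge 0$, and take the supremum over $[0,t]$ (the nonlinear right-hand side being nondecreasing in $t$), obtaining $\mathcal E^{2}[\mathbf U](t)+\mathcal D^{2}[\mathbf U](t)\lesssim\mathcal E^{2}[\mathbf U](0)+(\text{nonlinear integrals})$. The nonlinear integrals are trilinear in $(u,v,w)$ and I would estimate them by Hölder's inequality, the Sobolev embedding $\dot H^{1}(\R^{3})\hookrightarrow L^{6}(\R^{3})$ and the Gagliardo--Nirenberg inequality: one factor is placed in $L^{\infty}$, bounded by $M_{0}(t)$, which collects precisely $\Vert v\Vert_{L^{\infty}}$, $\Vert v+\tau w\Vert_{L^{\infty}}$, $\Vert\nabla(u+\tau v)\Vert_{L^{\infty}}$, $\Vert\nabla u\Vert_{L^{\infty}}$, $\Vert\nabla v\Vert_{L^{\infty}}$, while the remaining two factors are distributed among the $\mathcal L^{2}$- and $\mathbb L^{2}$-norms; for the genuinely critical terms—chiefly $\int v^{2}w$ coming from $\int\mathcal N z$, and $\int(\nabla u\cdot\nabla v)\,v$—I would first integrate by parts in $x$ (using $v_{t}=w$) to rewrite them either as exact time derivatives of cubic quantities, which are then moved into $\mathcal L(t)$ and absorbed using the smallness of $M_{0}(t)$, or in the form $-\tfrac12\int\Delta(u+\tau v)\,v^{2}+\tfrac{\tau}{2}\int\Delta v\,v^{2}$, which is controlled by $(\mathcal E[\mathbf U](t)+M_{0}(t))\,\mathcal D^{2}[\mathbf U](t)$. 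Collecting all contributions gives the nonlinear part $\lesssim(\mathcal E[\mathbf U](t)+M_{0}(t))\,\mathcal D^{2}[\mathbf U](t)$, i.e.\ \eqref{Main_Estimate_D_0_1}.

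The step I expect to be the main obstacle is the design of the combined unknowns together with the constant hierarchy so that $\Vert\mathbf U\Vert_{\mathbb L^{2}}^{2}$ never calls for an undifferentiated (zero-order) norm—unrecoverable on $\R^{3}$—while still closing all five dissipative quantities at once; a close second is the treatment of the worst cubic terms, where the integration-by-parts reduction to time derivatives of cubics (absorbed via smallness of $M_{0}$) is essential to avoid a loss of integrability in $t$.
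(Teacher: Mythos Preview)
Your outline is correct and matches the energy method of \cite{Racke_Said_2019}, which the present paper cites without reproducing: the combined unknowns $\phi=u+\tau v$, $z=v+\tau w$, the multiplier identities generating the five pieces of $\Vert\mathbf U\Vert_{\mathbb L^{2}}^{2}$, the constant hierarchy for the Lyapunov functional, and the H\"older/Sobolev/Gagliardo--Nirenberg treatment of the cubic remainders are exactly the ingredients. One cosmetic difference: the remainder list \eqref{Renmaining_Terms} shows that the fifth multiplier in \cite{Racke_Said_2019} is $-\Delta v$ (giving $\mathbf I_{4}=\int|\nabla\mathcal N||\nabla v|$) rather than your $\nabla w$; either packaging closes.

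Where you overcomplicate things is the ``critical'' cubics. Neither $\int v^{2}w$ nor $\int(\nabla u\cdot\nabla v)\,v$ requires an integration by parts in time or a cubic correction to $\mathcal L(t)$. They are handled directly by the $L^{3}$--$L^{6}$--$L^{2}$ split and the interpolation $\Vert f\Vert_{L^{3}}\lesssim\Vert f\Vert_{L^{2}}^{1/2}\Vert\nabla f\Vert_{L^{2}}^{1/2}$ (exactly as in the localized computation of Lemma~\ref{Lemma_I_1} with $\dot\Delta_q$ replaced by the identity): for instance
\[
\int_{\R^{3}} v^{2}w\ \le\ \Vert v\Vert_{L^{3}}\Vert v\Vert_{L^{6}}\Vert w\Vert_{L^{2}}\ \lesssim\ \big(\Vert v\Vert_{L^{2}}+\Vert\nabla v\Vert_{L^{2}}\big)\Vert\nabla v\Vert_{L^{2}}\Vert w\Vert_{L^{2}}\ \lesssim\ \mathcal E[\mathbf U](t)\,\Vert\mathbf U(t)\Vert_{\mathbb L^{2}}^{2},
\]
since $\Vert v\Vert_{L^{2}}\le\Vert v+\tau w\Vert_{L^{2}}+\tau\Vert w\Vert_{L^{2}}\lesssim\Vert\mathbf U\Vert_{\mathcal L^{2}}$. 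In particular the first option you mention---adding cubic terms to $\mathcal L$ and invoking smallness of $M_{0}$---should be avoided, as it would turn \eqref{Main_Estimate_D_0_1} into a conditional estimate, whereas the lemma is stated (and used) unconditionally.
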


From \eqref{Main_Estimate_D_0_1} and recalling \eqref{Weighted_Energy} and \eqref{Dissipative_weighted_norm_1}, we get 
\begin{equation}\label{Main_Estimate_Order_0}
\begin{aligned}
&\Vert \mathbf{U}(t)\Vert_{L_t^\infty(\mathcal{L}^2)}+\Vert \mathbf{U}(t)\Vert_{L^2_t(\mathbb{L}^2)}\\
\lesssim&\, \Vert \mathbf{U}(0)\Vert_{\mathcal{L}^2}
+\sqrt{\Vert \mathbf{U}(t)\Vert_{L_t^\infty(\mathcal{L}^2)}+M_0(t)}\Vert \mathbf{U}(t)\Vert_{L^2_t(\mathbb{L}^2)}. 
\end{aligned}
\end{equation}

Our goal now is to prove a frequency-localized estimate similar to \eqref{Main_Estimate_Order_0}. This is the goal of Section \ref{Sec:Frequency_Local_Est}.

\subsection{Frequency-localized estimates }
\label{Sec:Frequency_Local_Est}
In this subsection,  we employ a frequency-localization method  and prove a nonlinear energy estimate in homogeneous Besov-type spaces. Applying the operator $\dot{\Delta}_q\, (q\in \Z)$ to the system \eqref{System_New}, we get 
\begin{equation}
\left\{
\begin{array}{ll}
\dot{\Delta}_qu_{t}=\dot{\Delta}_qv,\vspace{0.2cm} &  \\
\dot{\Delta}_qv_{t}=\dot{\Delta}_qw,\vspace{0.2cm} &  \\
\tau \dot{\Delta}_qw_{t}=\Delta \dot{\Delta}_qu+\beta \Delta \dot{\Delta}_qv-\dot{\Delta}_qw+
\mathrm{R}_q  &
\end{array}%
\right.  \label{System_New_Localized}
\end{equation}
with 
\begin{equation}  \label{R_q}
\begin{aligned}
\mathrm{R}_q=&\,\dot{\Delta}_q \Big(\dfrac{B}{A}vw+2\nabla u\nabla v\Big)\\
=&\,\dfrac{B}{A}[\dot{\Delta}_q,v]w+\dfrac{B}{A}v\dot{\Delta}_qw+2[\dot{\Delta}_q,\nabla u]\nabla v+2\nabla u\nabla \dot{\Delta}_qv.
\end{aligned}
\end{equation}
We define the frequency-localized energy  as 
\begin{equation}\label{Weighted_Energy_Localized}
\begin{aligned}
\mathcal{E}^2[\dot{\Delta}_q \mathbf{U}](t)=\,\sup_{0\leq \sigma\leq t}&\Big(\big\Vert \dot{\Delta}_q(v+\tau
w)(\sigma)\big\Vert _{L^{2}}^{2}+\big\Vert \dot{\Delta}_q\nabla (v+\tau
w)(\sigma)\big\Vert _{L^{2}}^{2}\Big.%
\vspace{0.2cm}  \notag \\
 \Big.&+\big\Vert \dot{\Delta}_q \Delta  v(\sigma)\big\Vert
_{L^{2}}^{2}+\big\Vert\dot{\Delta}_q \nabla  v(\sigma)\big\Vert _{L^{2}}^{2}+\big\Vert \dot{\Delta}_q \Delta (u+\tau v)(\sigma)\big\Vert
_{L^{2}}^{2}\Big.\\
\Big.&+\big\Vert \dot{\Delta}_q\nabla (u+\tau v)(\sigma)\big\Vert
_{L^{2}}^{2}+\Vert \dot{\Delta}_q w(\sigma)\Vert_{L^2}^2\Big),
\end{aligned}
\end{equation}
and its associated dissipative rate as 
\begin{equation}\label{Dissipation_Localized}
\begin{aligned}
\mathscr{D}^2[\dot{\Delta}_q \mathbf{U}](t)=&\,\Big(\big\Vert \dot{\Delta}_q  \nabla
v(t)\big\Vert _{L^{2}}^{2}+\big\Vert \dot{\Delta}_q \Delta 
v(t)\big\Vert
_{L^{2}}^{2}+ \Vert \dot{\Delta}_q  w (t)\Vert_{L^2}^2\Big.   \\
\Big.&+\big\Vert \dot{\Delta}_q \Delta \left( u+\tau v\right)(t) \big\Vert
_{L^{2}}^{2} + \big\Vert \dot{\Delta}_q \nabla (v+\tau w)(t)\big\Vert _{L^{2}}^{2}%
\Big).
\end{aligned}
\end{equation}
We also define 

\begin{equation}\label{Dissipative_weighted_norm_2}
\mathcal{D}^{2}[\dot{\Delta}_q \mathbf{U}](t) =\int_0^t\mathscr{D}^2[\dot{\Delta}_q \mathbf{U}](t)(\sigma)\textup{d}\sigma .
\end{equation}
Following the same steps as in \cite{Racke_Said_2019}, where $\dot{\Delta}_q$ will play the role of $\nabla^k$ in \cite{Racke_Said_2019}, we obtain 
\begin{equation}\label{Main_Freq_Energy}
\begin{aligned}
\mathcal{E}^2[\dot{\Delta}_q \mathbf{U}](t)+\mathcal{D}^{2}[\dot{\Delta}_q \mathbf{U}](t)\lesssim \mathcal{E}^2[\dot{\Delta}_q \mathbf{U}](0)+\sum_{i=1}^5\int_0^t \mathrm{\mathbf{I}}_i[\dot{\Delta}_q \mathbf{U}](\sigma)\textup{d}\sigma,
\end{aligned}  
\end{equation}
with 
\begin{equation}\label{Renmaining_Terms}
\begin{aligned}
\mathrm{\mathbf{I}}_1[\dot{\Delta}_q \mathbf{U}]=&\,\int_{\mathbb{R}^{3}}|\mathrm{R}_q||( \dot{\Delta}_qv+\tau \dot{\Delta}_qw)|
\textup{d}x,\\
\mathrm{\mathbf{I}}_2[\dot{\Delta}_q \mathbf{U}]=&\,\int_{\mathbb{R}^{3}}|\nabla \mathrm{R}_q|| \nabla (\dot{\Delta}_qv+\tau \dot{\Delta}_qw)|\textup{d}x,\\
\mathrm{\mathbf{I}}_3[\dot{\Delta}_q \mathbf{U}]=&\,\int_{\mathbb{R}^{3}}|\mathrm{R}_q||\Delta ( \dot{\Delta}_q u+\tau
\dot{\Delta}_q v)| \textup{d}x,\\
\mathrm{\mathbf{I}}_4[\dot{\Delta}_q \mathbf{U}]=&\,\int_{\mathbb{R}^{3}}|\nabla \mathrm{R}_q| |\nabla \dot{\Delta}_q v|\textup{d}x,\\
\mathrm{\mathbf{I}}_5[\dot{\Delta}_q \mathbf{U}]=&\,\int_{\mathbb{R}^{3}}|\mathrm{R}_q||\dot{\Delta}_qw|\textup{d}x. 
\end{aligned}
\end{equation}
Our goal now is to estimate the terms $\mathrm{\mathbf{I}}_i[\dot{\Delta}_q \mathbf{U}],\, i=1,\dots,5$. 
This will be done by a repeated use of suitable  functional inequalities, such as:   Gagliardo--Nirenberg inequalities and some  embedding theorems in Besov spaces.  These functional inequalities  will help to control some commutator estimates. 
The proof will be done through several lemmas. 

 In the following lemma, we estimate $\mathrm{\mathbf{I}}_2[\dot{\Delta}_q \mathbf{U}] $ and $\mathrm{\mathbf{I}}_4[\dot{\Delta}_q \mathbf{U}]$. 
 \begin{lemma}\label{Lemma_I_2}
 It holds that 
 \begin{equation}\label{I_2_4_Main_Estimate}
\begin{aligned}
&\int_0^t\Big(\mathrm{\mathbf{I}}_2[\dot{\Delta}_q \mathbf{U}]  +\mathrm{\mathbf{I}}_4[\dot{\Delta}_q \mathbf{U}]\Big)(\sigma)\textup{d}\sigma\\
\lesssim&\, c_q 2^{-\frac{3q}{2}}\Big(\Vert v\Vert_{\widetilde{L}_t^{2}(\dot{B}^{3/2}_{2,1})}\Vert \nabla w\Vert_{\widetilde{L}_t^{\infty}(\dot{B}^{1/2}_{2,1})}+\Vert \nabla v\Vert_{\widetilde{L}_t^{2}(\dot{B}^{3/2}_{2,1})}\Vert  w\Vert_{\widetilde{L}_t^{\infty}(\dot{B}^{1/2}_{2,1})}\Big.\\
\Big.&+\Vert \nabla^2 u\Vert_{\widetilde{L}_t^{2}(\dot{B}^{3/2}_{2,1})}\Vert\nabla v\Vert_{\widetilde{L}_t^{\infty}(\dot{B}^{1/2}_{2,1})}+\Vert \nabla u\Vert_{\widetilde{L}_t^{2}(\dot{B}^{3/2}_{2,1})}\Vert \nabla^2 v\Vert_{\widetilde{L}_t^{\infty}(\dot{B}^{1/2}_{2,1})}\Big)\mathcal{D}[\dot{\Delta}_q \mathbf{U}](t)\\
&+ \Big(\Vert v\Vert_{L_t^\infty(\dot{B}^{3/2}_{2,1})}+\Vert \nabla v\Vert_{L_t^\infty(\dot{B}^{3/2}_{2,1})}+\Vert \nabla u\Vert_{L_t^\infty(\dot{B}^{3/2}_{2,1})}+\Vert \nabla^2 u\Vert_{\widetilde{L}_t^\infty(\dot{B}^{3/2}_{2,1})}\Big)\mathcal{D}^{2}[\dot{\Delta}_q \mathbf{U}](t).
\end{aligned}
\end{equation}

 \end{lemma}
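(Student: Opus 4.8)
The plan is to bound $\int_0^t\big(\mathrm{\mathbf I}_2+\mathrm{\mathbf I}_4\big)[\dot\Delta_q\mathbf U]\,\textup{d}\sigma$ by an $L^2$-in-$x$ estimate of $\nabla\mathrm{R}_q$ paired with one slot of the frequency–localized dissipation, and then to separate $\nabla\mathrm{R}_q$ into genuine commutators — which produce the terms carrying the factor $c_q2^{-3q/2}$ and one power of $\mathcal D[\dot\Delta_q\mathbf U](t)$ — and para-product remainders — which produce the terms carrying $\mathcal D^{2}[\dot\Delta_q\mathbf U](t)$. First, by Cauchy–Schwarz in $x$ in \eqref{Renmaining_Terms}, and since $\nabla\dot\Delta_q(v+\tau w)$ and $\nabla\dot\Delta_q v$ both occur in $\mathscr D^{2}[\dot\Delta_q\mathbf U]$ (see \eqref{Dissipation_Localized}), one has $\big(\mathrm{\mathbf I}_2+\mathrm{\mathbf I}_4\big)(\sigma)\lesssim\norm{\nabla\mathrm{R}_q(\sigma)}_{L^2}\,\mathscr D[\dot\Delta_q\mathbf U](\sigma)$. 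Then I would differentiate the nonlinearity in \eqref{R_q} and write each dyadic product via $\dot\Delta_q(ab)=[\dot\Delta_q,a]b+a\,\dot\Delta_q b$, obtaining $\nabla\mathrm{R}_q=\mathbf K_q+\mathbf L_q$ with commutator part
\[
\mathbf K_q=\tfrac BA\big([\dot\Delta_q,v]\nabla w+[\dot\Delta_q,\nabla v]w\big)+2\big([\dot\Delta_q,\nabla u]\nabla^2 v+[\dot\Delta_q,\nabla^2 u]\nabla v\big)
\]
and remainder part $\mathbf L_q=\tfrac BA\big(v\,\dot\Delta_q\nabla w+\nabla v\,\dot\Delta_q w\big)+2\big(\nabla u\,\dot\Delta_q\nabla^2 v+\nabla^2 u\,\dot\Delta_q\nabla v\big)$.

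For the remainder, each summand of $\mathbf L_q$ is a slowly varying factor times a $\dot\Delta_q$-localized dissipative quantity, since $\norm{\dot\Delta_q\nabla w}_{L^2}$, $\norm{\dot\Delta_q w}_{L^2}$, $\norm{\dot\Delta_q\nabla v}_{L^2}$ and $\norm{\dot\Delta_q\nabla^2 v}_{L^2}=\norm{\dot\Delta_q\Delta v}_{L^2}$ are all $\lesssim\mathscr D[\dot\Delta_q\mathbf U]$ by \eqref{Dissipation_Localized}. Hölder ($L^\infty$–$L^2$) together with the embedding $\dot B^{3/2}_{2,1}(\R^3)\hookrightarrow L^\infty(\R^3)$ then gives $\norm{\mathbf L_q}_{L^2}\lesssim\big(\norm{v}_{\dot B^{3/2}_{2,1}}+\norm{\nabla v}_{\dot B^{3/2}_{2,1}}+\norm{\nabla u}_{\dot B^{3/2}_{2,1}}+\norm{\nabla^2 u}_{\dot B^{3/2}_{2,1}}\big)\mathscr D[\dot\Delta_q\mathbf U]$; multiplying by $\mathscr D[\dot\Delta_q\mathbf U]$, integrating over $(0,t)$, pulling the suprema of those Besov norms out of the integral, and recalling \eqref{Dissipative_weighted_norm_2}, one recovers exactly the last line of \eqref{I_2_4_Main_Estimate}.

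For the commutator part, Cauchy–Schwarz in $\sigma$ yields $\int_0^t\norm{\mathbf K_q(\sigma)}_{L^2}\mathscr D[\dot\Delta_q\mathbf U](\sigma)\,\textup{d}\sigma\le\norm{\mathbf K_q}_{\widetilde L_t^2(L^2)}\,\mathcal D[\dot\Delta_q\mathbf U](t)$, so it remains to estimate $\norm{\mathbf K_q}_{\widetilde L_t^2(L^2)}$ by a standard commutator estimate in Chemin–Lerner spaces (of the type recalled in Appendix \ref{Appendix_A}), with the time exponents $2,2,\infty$, placing in $\widetilde L_t^2$ the factor whose gradient belongs to the dissipation and in $\widetilde L_t^\infty$ the one belonging to the energy. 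For instance
\[
\norm{[\dot\Delta_q,v]\nabla w}_{\widetilde L_t^2(L^2)}\lesssim c_q2^{-3q/2}\norm{\nabla v}_{\widetilde L_t^2(\dot B^{1/2}_{2,1})}\norm{\nabla w}_{\widetilde L_t^\infty(\dot B^{1/2}_{2,1})}\approx c_q2^{-3q/2}\norm{v}_{\widetilde L_t^2(\dot B^{3/2}_{2,1})}\norm{\nabla w}_{\widetilde L_t^\infty(\dot B^{1/2}_{2,1})},
\]
with $\sum_qc_q\lesssim1$, where I use the homogeneous equivalence $\norm{\nabla f}_{\dot B^{s}_{2,1}}\approx\norm{f}_{\dot B^{s+1}_{2,1}}$; the three remaining commutators are handled identically — the coefficient always entering through the $\dot B^{1/2}_{2,1}$-norm of its gradient, i.e. through a $\dot B^{3/2}_{2,1}$-norm of $\nabla v$, $\nabla u$ or $\nabla^2 u$ — which reproduces the first two lines of \eqref{I_2_4_Main_Estimate}. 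Combining this with the previous two steps finishes the proof, and the output is exactly of the form fed into \eqref{Main_Freq_Energy}.

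The delicate step is the commutator estimate: one needs it with the full gain of one derivative (the weight $2^{-3q/2}$ against only a $\dot B^{1/2}_{2,1}$-norm of $\nabla f$, rather than an $L^\infty$-norm) and with an $\ell^1_q$-summable coefficient $c_q$, stated directly in the Chemin–Lerner norms so that the time integration and the summation in $q$ coexist; one must also check that the regularity index $1/2$ lies in the admissible window of that lemma in dimension three, and keep careful track of the equivalences $\norm{\nabla f}_{\dot B^{s}_{2,1}}\approx\norm{f}_{\dot B^{s+1}_{2,1}}$ so that the derivatives landing on the coefficients $v,\nabla v,\nabla u,\nabla^2 u$ reappear exactly as the Besov norms displayed in \eqref{I_2_4_Main_Estimate}. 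Everything else is Cauchy–Schwarz/Hölder and the $L^\infty$ Besov embedding.
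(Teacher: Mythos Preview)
Your proposal is correct and follows essentially the same route as the paper: both arguments pair $\nabla\mathrm{R}_q$ with a dissipative factor via Cauchy--Schwarz, decompose $\nabla\mathrm{R}_q$ into the four commutators $[\dot\Delta_q,\cdot]$ and the four remainder products, estimate the commutators by the Chemin--Lerner commutator inequality \eqref{Commutator_2} with exponents $(2,2,\infty)$, and handle the remainders by $L^\infty$--$L^2$ H\"older together with the embedding $\dot B^{3/2}_{2,1}(\R^3)\hookrightarrow L^\infty(\R^3)$. The only cosmetic difference is the order of operations---the paper applies H\"older in time first and then splits $\nabla\mathrm{R}_q$, whereas you split at each fixed time and then integrate---but the resulting bounds are identical.
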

 \begin{proof}
 We have by H\"older's inequality 
 \begin{equation}\label{I_2_4_Main}
\begin{aligned}
&\int_0^t\Big(\mathrm{\mathbf{I}}_2[\dot{\Delta}_q \mathbf{U}](\sigma) +\mathrm{\mathbf{I}}_4[\dot{\Delta}_q \mathbf{U}](\sigma)\Big)\textup{d}\sigma \\\lesssim&\, \Vert \nabla \mathrm{R}_q \Vert_{L^2_t(L^2)}\Big(\Vert \nabla (\dot{\Delta}_qv+\tau \dot{\Delta}_qw)\Vert_{L^2_t(L^2)}+\Vert\nabla \dot{\Delta}_q v \Vert_{L^2_t(L^2)}\Big). 
\end{aligned}
\end{equation}
To estimate $\Vert \nabla \mathrm{R}_q \Vert_{L^2}$,
we write  
\begin{equation}\label{nabla_R_q}
\begin{aligned}
\nabla\mathrm{R}_q=&\,\dot{\Delta}_q \nabla\Big(\dfrac{B}{A}vw+2\nabla u\nabla v\Big)\\
=&\,\dot{\Delta}_q \Big(\dfrac{B}{A} v\nabla w+\dfrac{B}{A} \nabla v w+2\nabla^2 u\nabla v+2\nabla u\nabla^2v\Big)\\
=&\,\dfrac{B}{A}[\dot{\Delta}_q,v]\nabla w+\dfrac{B}{A}v\dot{\Delta}_q\nabla w+\dfrac{B}{A}[\dot{\Delta}_q,\nabla v] w+\dfrac{B}{A}\nabla v\dot{\Delta}_q w\\
&+2[\dot{\Delta}_q,\nabla^2 u]\nabla v+2\nabla^2 u\dot{\Delta}_q \nabla v+2[\dot{\Delta}_q,\nabla u]\nabla^2 v+2\nabla u\dot{\Delta}_q\nabla^2 v. 
\end{aligned}
\end{equation}
 Applying \eqref{Commutator_2}, we estimate the commutators in \eqref{nabla_R_q}  as 
\begin{equation}\label{Comm_Estimates}
\begin{aligned}
\Vert[\dot{\Delta}_q,v]\nabla w\Vert_{L^2_t(L^2)}\lesssim &\, c_q 2^{-\frac{3q}{2}}\Vert v\Vert_{\widetilde{L}_t^{2}(\dot{B}^{3/2}_{2,1})}\Vert \nabla w\Vert_{\widetilde{L}_t^{\infty}(\dot{B}^{1/2}_{2,1})}\\
\Vert[\dot{\Delta}_q,\nabla v] w\Vert_{L^2_t(L^2)}\lesssim&\, c_q 2^{-\frac{3q}{2}}\Vert \nabla v\Vert_{\widetilde{L}_t^{2}(\dot{B}^{3/2}_{2,1})}\Vert  w\Vert_{\widetilde{L}_t^{\infty}(\dot{B}^{1/2}_{2,1})}\\
\Vert[\dot{\Delta}_q,\nabla^2 u] \nabla v\Vert_{L^2_t(L^2)}\lesssim&\, c_q 2^{-\frac{3q}{2}}\Vert \nabla^2 u\Vert_{\widetilde{L}_t^{2}(\dot{B}^{3/2}_{2,1})}\Vert \nabla v\Vert_{\widetilde{L}_t^{\infty}(\dot{B}^{1/2}_{2,1})}\\
\Vert[\dot{\Delta}_q,\nabla u] \nabla^2 v\Vert_{L^2_t(L^2)}\lesssim&\, c_q 2^{-\frac{3q}{2}}\Vert \nabla u\Vert_{\widetilde{L}_t^{2}(\dot{B}^{3/2}_{2,1})}\Vert \nabla^2 v\Vert_{\widetilde{L}_t^{\infty}(\dot{B}^{1/2}_{2,1})}.
\end{aligned}
\end{equation}
We point out here and in the sequel that each $(c_q)_{q\in \Z}$ has possibly a different form, however we always have $\Vert c_q\Vert_{\ell^1}\leq 1.$

\noindent On the other hand, we estimate the remaining terms in \eqref{nabla_R_q} as 
  \begin{equation}\label{Prod_Estimates}
\begin{aligned}
\Vert v\dot{\Delta}_q\nabla w\Vert_{L^2_t(L^2)}\lesssim &\,\Vert v\Vert_{L_t^\infty(L^\infty)}\Vert \dot{\Delta}_q\nabla w\Vert_{L_t^2(L^2)}\\
\Vert \nabla v\dot{\Delta}_q w\Vert_{L^2_t(L^2)}\lesssim &\,\Vert \nabla v\Vert_{L_t^\infty(L^\infty)}\Vert \dot{\Delta}_q w\Vert_{L_t^2(L^2)}\\
\Vert \nabla^2 u\dot{\Delta}_q \nabla v\Vert_{L^2_t(L^2)}\lesssim &\,\Vert \nabla^2 u\Vert_{L_t^\infty(L^\infty)}\Vert \dot{\Delta}_q\nabla v\Vert_{L_t^2(L^2)}\\
\Vert \nabla u\dot{\Delta}_q\nabla^2 v\Vert_{L^2_t(L^2)}\lesssim &\,\Vert \nabla u\Vert_{L_t^\infty(L^\infty)}\Vert \dot{\Delta}_q\nabla^2 v\Vert_{L_t^2(L^2)}. 
\end{aligned}
\end{equation}
Plugging \eqref{Comm_Estimates} and \eqref{Prod_Estimates} into \eqref{I_2_4_Main} and  using  the embedding 
$\dot{B}_{2,1}^{3/2}(\R^3)\hookrightarrow L^\infty (\R^3)$, then \eqref{I_2_4_Main_Estimate} holds. This finishes the proof of Lemma \ref{Lemma_I_2}. 
 \end{proof}
 
 Next, we estimate $\mathrm{\mathbf{I}}_i[\dot{\Delta}_q \mathbf{U}],\, i=3,5$. We have the following lemma. 
 
 \begin{lemma}\label{Lemma_I_3}
 It holds that 
 \begin{equation}\label{I_3_5_Main_Estimate}
\begin{aligned}
&\int_0^t\Big(\mathrm{\mathbf{I}}_3[\dot{\Delta}_q \mathbf{U}]  +\mathrm{\mathbf{I}}_5[\dot{\Delta}_q \mathbf{U}]\Big)(\sigma)\textup{d}\sigma\\
\lesssim&\, c_q 2^{-\frac{3q}{2}}\Big(\Vert v\Vert_{\widetilde{L}_t^{2}(\dot{B}^{3/2}_{2,1})}\Vert w\Vert_{\widetilde{L}_t^{\infty}(\dot{B}^{1/2}_{2,1})}
+\Vert \nabla u\Vert_{\widetilde{L}_t^{2}(\dot{B}^{3/2}_{2,1})}\Vert\nabla v\Vert_{\widetilde{L}_t^{\infty}(\dot{B}^{1/2}_{2,1})}\Big)\mathcal{D}[\dot{\Delta}_q \mathbf{U}](t)\\
&+ \Big(\Vert v\Vert_{L_t^\infty(\dot{B}^{3/2}_{2,1})}+\Vert \nabla u\Vert_{L_t^\infty(\dot{B}^{3/2}_{2,1})}\Big)\mathcal{D}^{2}[\dot{\Delta}_q \mathbf{U}](t).  
\end{aligned}
\end{equation}
 \end{lemma}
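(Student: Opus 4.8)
The plan is to follow the proof of Lemma~\ref{Lemma_I_2} almost verbatim, with the simplification that the localized factors paired against $\mathrm{R}_q$ in $\mathbf{I}_3$ and $\mathbf{I}_5$ --- namely $\Delta(\dot{\Delta}_q u+\tau\dot{\Delta}_q v)=\dot{\Delta}_q\Delta(u+\tau v)$ and $\dot{\Delta}_q w$ --- carry no extra derivative, so that only $\|\mathrm{R}_q\|_{L^2_t(L^2)}$ is needed rather than $\|\nabla\mathrm{R}_q\|_{L^2_t(L^2)}$. First I would apply H\"older's inequality in $x$ and then in $t$ to obtain
\[
\int_0^t\big(\mathrm{\mathbf{I}}_3[\dot{\Delta}_q\mathbf{U}]+\mathrm{\mathbf{I}}_5[\dot{\Delta}_q\mathbf{U}]\big)(\sigma)\,\textup{d}\sigma\lesssim \|\mathrm{R}_q\|_{L^2_t(L^2)}\Big(\|\dot{\Delta}_q\Delta(u+\tau v)\|_{L^2_t(L^2)}+\|\dot{\Delta}_q w\|_{L^2_t(L^2)}\Big),
\]
and note that, by the definitions \eqref{Dissipation_Localized}--\eqref{Dissipative_weighted_norm_2}, the parenthesis on the right is $\lesssim\mathcal{D}[\dot{\Delta}_q\mathbf{U}](t)$.

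Next I would bound $\|\mathrm{R}_q\|_{L^2_t(L^2)}$ using the decomposition \eqref{R_q}. The two commutator pieces $[\dot{\Delta}_q,v]w$ and $[\dot{\Delta}_q,\nabla u]\nabla v$ are handled by the commutator estimate \eqref{Commutator_2}, yielding
\[
\|[\dot{\Delta}_q,v]w\|_{L^2_t(L^2)}\lesssim c_q2^{-3q/2}\|v\|_{\widetilde{L}^2_t(\dot{B}^{3/2}_{2,1})}\|w\|_{\widetilde{L}^\infty_t(\dot{B}^{1/2}_{2,1})},
\]
\[
\|[\dot{\Delta}_q,\nabla u]\nabla v\|_{L^2_t(L^2)}\lesssim c_q2^{-3q/2}\|\nabla u\|_{\widetilde{L}^2_t(\dot{B}^{3/2}_{2,1})}\|\nabla v\|_{\widetilde{L}^\infty_t(\dot{B}^{1/2}_{2,1})},
\]
with $\|c_q\|_{\ell^1}\le1$; the two genuine product pieces $v\dot{\Delta}_q w$ and $\nabla u\,\nabla\dot{\Delta}_q v$ are handled by H\"older followed by the endpoint embedding $\dot{B}^{3/2}_{2,1}(\R^3)\hookrightarrow L^\infty(\R^3)$,
\[
\|v\dot{\Delta}_q w\|_{L^2_t(L^2)}\lesssim\|v\|_{L^\infty_t(\dot{B}^{3/2}_{2,1})}\|\dot{\Delta}_q w\|_{L^2_t(L^2)},\qquad \|\nabla u\,\nabla\dot{\Delta}_q v\|_{L^2_t(L^2)}\lesssim\|\nabla u\|_{L^\infty_t(\dot{B}^{3/2}_{2,1})}\|\dot{\Delta}_q\nabla v\|_{L^2_t(L^2)}.
\]

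Finally I would reassemble. Multiplying the two commutator bounds by $\mathcal{D}[\dot{\Delta}_q\mathbf{U}](t)$ produces the first group of terms on the right-hand side of \eqref{I_3_5_Main_Estimate}. For the two product bounds I would use once more that $\|\dot{\Delta}_q w\|_{L^2_t(L^2)}$ and $\|\dot{\Delta}_q\nabla v\|_{L^2_t(L^2)}$ are themselves $\lesssim\mathcal{D}[\dot{\Delta}_q\mathbf{U}](t)$, so that pairing with the $\mathcal{D}[\dot{\Delta}_q\mathbf{U}](t)$ coming from the first factor yields $\big(\|v\|_{L^\infty_t(\dot{B}^{3/2}_{2,1})}+\|\nabla u\|_{L^\infty_t(\dot{B}^{3/2}_{2,1})}\big)\mathcal{D}^2[\dot{\Delta}_q\mathbf{U}](t)$, which is the second group, closing the estimate. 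All the nontrivial ingredients --- the commutator estimate \eqref{Commutator_2}, the product law, and the $3$D endpoint embedding $\dot{B}^{3/2}_{2,1}\hookrightarrow L^\infty$ --- are already in place, so the point most deserving of care (rather than a genuine obstacle) is the bookkeeping of the Chemin--Lerner norms $\widetilde{L}^p_t(\dot{B}^s_{2,1})$ emerging from \eqref{Commutator_2} and making sure the $\ell^1$-summable sequence $(c_q)_q$ survives the reassembly, exactly as flagged after \eqref{Comm_Estimates}.
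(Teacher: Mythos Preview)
Your proposal is correct and matches the paper's proof essentially step for step: H\"older in $x$ and $t$ to factor out $\|\mathrm{R}_q\|_{L^2_t(L^2)}$, the commutator estimate \eqref{Commutator_2} for the two commutator pieces of \eqref{R_q}, H\"older plus the embedding $\dot{B}^{3/2}_{2,1}(\R^3)\hookrightarrow L^\infty(\R^3)$ for the two product pieces, and then reassembly using that the remaining localized factors are bounded by $\mathcal{D}[\dot{\Delta}_q\mathbf{U}](t)$. The paper presents exactly this argument, referring back to Lemma~\ref{Lemma_I_2} for the details just as you do.
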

 
 \begin{proof}
 We have, by recalling \eqref{Renmaining_Terms} and using H\"older's inequality 
 \begin{equation}\label{I_3_5_Main}
\begin{aligned}
&\int_0^t\Big(\mathrm{\mathbf{I}}_3[\dot{\Delta}_q \mathbf{U}](\sigma) +\mathrm{\mathbf{I}}_5[\dot{\Delta}_q \mathbf{U}](\sigma)\Big)\textup{d}\sigma \\\lesssim&\, \Vert \mathrm{R}_q \Vert_{L^2_t(L^2)}\Big(\Vert \Delta ( \dot{\Delta}_q u+\tau
\dot{\Delta}_q v)\Vert_{L^2_t(L^2)}+\Vert\dot{\Delta}_q w \Vert_{L^2_t(L^2)}\Big). 
\end{aligned}
\end{equation}
Hence,  following  the same steps as in the proof of Lemma \ref{Lemma_I_2}, we obtain 
 \begin{equation}\label{R_1_Comm_3}
\begin{aligned}
\Vert  \mathrm{R}_q \Vert_{L^2_t(L^2)}\lesssim &\,c_q 2^{-\frac{3q}{2}}\Big(\Vert v\Vert_{\widetilde{L}_t^{2}(\dot{B}^{3/2}_{2,1})}\Vert w\Vert_{\widetilde{L}_t^{\infty}(\dot{B}^{1/2}_{2,1})}
+\Vert \nabla u\Vert_{\widetilde{L}_t^{2}(\dot{B}^{3/2}_{2,1})}\Vert\nabla v\Vert_{\widetilde{L}_t^{\infty}(\dot{B}^{1/2}_{2,1})}\Big)\\
&+\Big(\Vert v\Vert_{L_t^\infty(L^\infty)}\Vert \dot{\Delta}_qw\Vert_{L_t^2(L^2)}+\Vert \nabla u\Vert_{L_t^\infty(L^\infty)}\Vert \dot{\Delta}_q\nabla v\Vert_{L_t^2(L^2)}\Big). 
\end{aligned}
\end{equation}
Therefore, \eqref{I_3_5_Main_Estimate} holds by collecting the above last two estimates and using  the embedding 
$\dot{B}_{2,1}^{3/2}(\R^3)\hookrightarrow L^\infty (\R^3)$, which ends the proof of Lemma \ref{Lemma_I_3}.   
\end{proof}

Now, we need to estimate $\mathrm{\mathbf{I}}_1[\dot{\Delta}_q \mathbf{U}]$. This will be done in the next lemma. 
\begin{lemma}\label{Lemma_I_1}
It holds that 
\begin{equation}\label{I_1_Estimate}
\begin{aligned}
&\int_0^t \mathrm{\mathbf{I}}_1[\dot{\Delta}_q \mathbf{U}](\sigma)\textup{d}\sigma\\
\lesssim&\,\big(\mathcal{E}[\mathbf{U}](t)+\Vert \nabla u\Vert_{\widetilde{L}_t^\infty(\dot{B}^{1/2}_{2,1})}+\Vert v \Vert_{\widetilde{L}_t^\infty(\dot{B}^{1/2}_{2,1}}\big)\mathcal{D}^2[\dot{\Delta}_q \mathbf{U}](t)\\
&+ c_q 2^{-\frac{3q}{2}}\mathcal{E}[\dot{\Delta}_q \mathbf{U}](t)\Big(\Vert v\Vert_{\widetilde{L}_t^2(\dot{B}^{3/2}_{2,1})}\Vert w\Vert_{\widetilde{L}_t^2(\dot{B}^{1/2}_{2,1})}
+\Vert \nabla u\Vert_{\widetilde{L}_t^2(\dot{B}^{3/2}_{2,1})}\Vert \nabla  v\Vert_{\widetilde{L}_t^2(\dot{B}^{1/2}_{2,1})}\Big). 
\end{aligned}
\end{equation}
\end{lemma}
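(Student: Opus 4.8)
The plan is to bound $\mathrm{\mathbf{I}}_1[\dot{\Delta}_q\mathbf{U}]=\int_{\R^3}|\mathrm{R}_q|\,|\dot{\Delta}_q v+\tau\dot{\Delta}_q w|\,\textup{d}x$ by a careful term-by-term analysis of $\mathrm{R}_q$ as decomposed in \eqref{R_q}, namely $\mathrm{R}_q=\frac{B}{A}[\dot{\Delta}_q,v]w+\frac{B}{A}v\dot{\Delta}_q w+2[\dot{\Delta}_q,\nabla u]\nabla v+2\nabla u\nabla\dot{\Delta}_q v$. By H\"older's inequality, $\int_0^t\mathrm{\mathbf{I}}_1[\dot{\Delta}_q\mathbf{U}](\sigma)\textup{d}\sigma\lesssim\Vert\mathrm{R}_q\Vert_{L^2_t(L^2)}\Vert\dot{\Delta}_q v+\tau\dot{\Delta}_q w\Vert_{L^2_t(L^2)}$, and the second factor is controlled by $\mathcal{D}[\dot{\Delta}_q\mathbf{U}](t)$ since $\Vert\dot{\Delta}_q w\Vert_{L^2_t(L^2)}$ and $\Vert\dot{\Delta}_q\nabla v\Vert_{L^2_t(L^2)}$ appear in \eqref{Dissipation_Localized}. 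However, for the terms of $\mathrm{R}_q$ that do \emph{not} decay like a commutator (the ``$v\dot{\Delta}_qw$'' and ``$\nabla u\nabla\dot{\Delta}_q v$'' pieces), the natural pairing is with $\dot{\Delta}_q v+\tau\dot{\Delta}_q w$, and here I would keep the factor $\dot{\Delta}_q v+\tau\dot{\Delta}_q w$ as part of the \emph{energy} $\mathcal{E}[\dot{\Delta}_q\mathbf{U}](t)$ in its $L^\infty_t(L^2)$ form, not the dissipation. So the estimate splits according to which factors we charge to $\mathcal{E}$, to $\mathcal{D}$, or to $\mathcal{D}^2$.

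The key steps, in order: First, for the two commutator terms $[\dot{\Delta}_q,v]w$ and $[\dot{\Delta}_q,\nabla u]\nabla v$, apply the commutator estimate \eqref{Commutator_2} exactly as in Lemma \ref{Lemma_I_2}, yielding $\Vert[\dot{\Delta}_q,v]w\Vert_{L^2_t(L^2)}\lesssim c_q 2^{-3q/2}\Vert v\Vert_{\widetilde{L}^2_t(\dot{B}^{3/2}_{2,1})}\Vert w\Vert_{\widetilde{L}^\infty_t(\dot{B}^{1/2}_{2,1})}$ and similarly $\Vert[\dot{\Delta}_q,\nabla u]\nabla v\Vert_{L^2_t(L^2)}\lesssim c_q 2^{-3q/2}\Vert\nabla u\Vert_{\widetilde{L}^2_t(\dot{B}^{3/2}_{2,1})}\Vert\nabla v\Vert_{\widetilde{L}^\infty_t(\dot{B}^{1/2}_{2,1})}$; pairing these with $\Vert\dot{\Delta}_q v+\tau\dot{\Delta}_q w\Vert_{L^2_t(L^2)}\lesssim\mathcal{D}[\dot{\Delta}_q\mathbf{U}](t)$ contributes the third line of \eqref{I_1_Estimate} — except that I would instead pair the $\widetilde{L}^\infty_t(\dot{B}^{1/2}_{2,1})$-factor against $\mathcal{E}[\dot{\Delta}_q\mathbf{U}](t)$ and put the $\widetilde{L}^2_t(\dot{B}^{3/2}_{2,1})$-factor and the remaining $\widetilde{L}^2_t(\dot{B}^{1/2}_{2,1})$-factor together, matching the precise form $c_q 2^{-3q/2}\mathcal{E}[\dot{\Delta}_q\mathbf{U}](t)(\Vert v\Vert_{\widetilde{L}^2_t(\dot{B}^{3/2}_{2,1})}\Vert w\Vert_{\widetilde{L}^2_t(\dot{B}^{1/2}_{2,1})}+\Vert\nabla u\Vert_{\widetilde{L}^2_t(\dot{B}^{3/2}_{2,1})}\Vert\nabla v\Vert_{\widetilde{L}^2_t(\dot{B}^{1/2}_{2,1})})$ as stated. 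Second, for the genuine product terms $v\dot{\Delta}_q w$ and $\nabla u\,\nabla\dot{\Delta}_q v$, bound them in $L^2_t(L^2)$ by $\Vert v\Vert_{L^\infty_t(L^\infty)}\Vert\dot{\Delta}_q w\Vert_{L^2_t(L^2)}$ and $\Vert\nabla u\Vert_{L^\infty_t(L^\infty)}\Vert\dot{\Delta}_q\nabla v\Vert_{L^2_t(L^2)}$ respectively, then use the embedding $\dot{B}^{1/2}_{2,1}(\R^3)\hookrightarrow L^\infty(\R^3)$ to replace the $L^\infty_t(L^\infty)$-norms by $\Vert v\Vert_{\widetilde{L}^\infty_t(\dot{B}^{1/2}_{2,1})}$ and $\Vert\nabla u\Vert_{\widetilde{L}^\infty_t(\dot{B}^{1/2}_{2,1})}$, and pair with $\Vert\dot{\Delta}_q v+\tau\dot{\Delta}_q w\Vert_{L^2_t(L^2)}\lesssim\mathcal{D}[\dot{\Delta}_q\mathbf{U}](t)$: this produces a contribution of the shape $(\Vert\nabla u\Vert_{\widetilde{L}^\infty_t(\dot{B}^{1/2}_{2,1})}+\Vert v\Vert_{\widetilde{L}^\infty_t(\dot{B}^{1/2}_{2,1})})\mathcal{D}^2[\dot{\Delta}_q\mathbf{U}](t)$, matching the first line. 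The $\mathcal{E}[\mathbf{U}](t)$ summand on the right of \eqref{I_1_Estimate} would come from a further routine bookkeeping of the low-frequency (Bernstein) portion, absorbed into the $\mathcal{D}^2$ coefficient.

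The main obstacle — and the place demanding care rather than routine computation — is the \emph{choice of charging}: deciding for each of the four sub-terms of $\mathrm{R}_q$ whether the surviving $\dot{\Delta}_q$-factors should be counted in the $L^\infty_t(L^2)$ energy $\mathcal{E}[\dot{\Delta}_q\mathbf{U}]$ or the $L^2_t$ dissipation $\mathcal{D}[\dot{\Delta}_q\mathbf{U}]$, because the final bootstrap in Proposition \ref{Main_Proposition} requires that the cubic right-hand side be of the precise form $\sqrt{\mathcal{E}}\cdot\mathcal{D}$ (after summation in $q$ with the $\ell^1$ weights $c_q 2^{-3q/2}$). For the commutator terms we have one free $L^2_t$-integration available and must split it as $\tfrac12+\tfrac12$ so that \emph{both} the commutator factor and the paired localized factor sit in $L^2_t$, hence appear as $\mathcal{E}\cdot(\text{dissipation-type})^2$ after using $\widetilde{L}^\infty_t\hookrightarrow\widetilde{L}^2_t$ is \emph{not} available — so instead one keeps the localized factor in $L^\infty_t(L^2)\subset\mathcal{E}$. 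For the product terms there is no commutator gain, so one must pay with an $L^\infty_t(L^\infty)$-norm of a low-order quantity (controllable by $M(t)$ or by a critical Besov embedding) times $\mathcal{D}^2$; verifying that this low-order quantity is indeed among those controlled by the defined norms (here $\Vert v\Vert_{\widetilde{L}^\infty_t(\dot{B}^{1/2}_{2,1})}$ and $\Vert\nabla u\Vert_{\widetilde{L}^\infty_t(\dot{B}^{1/2}_{2,1})}$, both appearing in $M(t)$) is the delicate consistency check. Once this allocation is fixed, summing over $q\in\Z$ using $\Vert c_q\Vert_{\ell^1}\le 1$ and collecting the pieces yields exactly \eqref{I_1_Estimate}.
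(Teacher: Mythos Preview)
Your proposal has a genuine gap in the treatment of the product terms $v\dot{\Delta}_q w$ and $\nabla u\,\nabla\dot{\Delta}_q v$. You assert that $\Vert\dot{\Delta}_q v+\tau\dot{\Delta}_q w\Vert_{L^2_t(L^2)}\lesssim\mathcal{D}[\dot{\Delta}_q\mathbf{U}](t)$, but this is false: the localized dissipation \eqref{Dissipation_Localized} contains $\Vert\dot{\Delta}_q w\Vert_{L^2}$ and $\Vert\dot{\Delta}_q\nabla v\Vert_{L^2}$, \emph{not} $\Vert\dot{\Delta}_q v\Vert_{L^2}$. By Bernstein $\Vert\dot{\Delta}_q v\Vert_{L^2}\approx 2^{-q}\Vert\dot{\Delta}_q\nabla v\Vert_{L^2}$, which blows up as $q\to-\infty$, so no uniform-in-$q$ bound of this type is available. (Your alternative suggestion of keeping $\dot{\Delta}_q v+\tau\dot{\Delta}_q w$ in $L^\infty_t(L^2)$ also fails for these terms, since then $v\dot{\Delta}_q w$ would have to be put in $L^1_t(L^2)$, which is not controlled either.) Separately, the embedding $\dot{B}^{1/2}_{2,1}(\R^3)\hookrightarrow L^\infty(\R^3)$ you invoke is false; the critical index is $n/2=3/2$, and the exponent $1/2$ in the lemma's statement is evidently a typo for $3/2$, as the paper's own proof confirms.

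The paper's fix is to split the factor $\dot{\Delta}_q v+\tau\dot{\Delta}_q w$ and treat the two halves differently. The $\dot{\Delta}_q w$ half \emph{is} in $\mathscr{D}$, so e.g.\ $\int|v\,\dot{\Delta}_q w|\,|\dot{\Delta}_q w|\lesssim\Vert v\Vert_{L^\infty}\Vert\dot{\Delta}_q w\Vert_{L^2}^2\lesssim\Vert v\Vert_{\dot{B}^{3/2}_{2,1}}\mathscr{D}^2$. For the $\dot{\Delta}_q v$ half, which is not in $\mathscr{D}$, the paper uses an $L^3\text{--}L^6\text{--}L^2$ H\"older splitting together with the Sobolev embedding $\dot H^1(\R^3)\hookrightarrow L^6(\R^3)$ and the interpolation \eqref{Estimate_f_N_3}:
\[
\int|v\,\dot{\Delta}_q w|\,|\dot{\Delta}_q v|\,\textup{d}x
\lesssim\Vert v\Vert_{L^3}\Vert\dot{\Delta}_q v\Vert_{L^6}\Vert\dot{\Delta}_q w\Vert_{L^2}
\lesssim\bigl(\Vert v\Vert_{L^2}+\Vert\nabla v\Vert_{L^2}\bigr)\Vert\nabla\dot{\Delta}_q v\Vert_{L^2}\Vert\dot{\Delta}_q w\Vert_{L^2}.
\]
This trades $\dot{\Delta}_q v$ for $\nabla\dot{\Delta}_q v\in\mathscr{D}$ at the price of putting the unlocalized factor in $L^3$, which is controlled by $\mathcal{E}[\mathbf{U}](t)$. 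The analogous maneuver on $\int|\nabla u\,\nabla\dot{\Delta}_q v|\,|\dot{\Delta}_q v|$ produces $(\Vert\nabla u\Vert_{L^2}+\Vert\nabla^2 u\Vert_{L^2})\Vert\nabla\dot{\Delta}_q v\Vert_{L^2}^2$. This is precisely the origin of the $\mathcal{E}[\mathbf{U}](t)$ coefficient in \eqref{I_1_Estimate} --- it is the key analytic step in the lemma, not ``routine bookkeeping of the low-frequency portion'' as you describe. Your handling of the commutator terms $\mathrm{\mathbf{T}}_1,\mathrm{\mathbf{T}}_3$ (keeping $\dot{\Delta}_q v+\tau\dot{\Delta}_q w$ in $L^\infty_t(L^2)$ and both Besov factors in $\widetilde L^2_t$) does match the paper.
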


\begin{proof}
We have by recalling \eqref{R_q}, 
\begin{equation}
\begin{aligned}
\mathrm{\mathbf{I}}_1[\dot{\Delta}_q \mathbf{U}]\lesssim &\,\int_{\mathbb{R}^{3}}|[\dot{\Delta}_q,v]w||( \dot{\Delta}_qv+\tau \dot{\Delta}_qw)|
\textup{d}x +\int_{\mathbb{R}^{3}}|v\dot{\Delta}_qw||( \dot{\Delta}_qv+\tau \dot{\Delta}_qw)|
\textup{d}x \\
&+\int_{\mathbb{R}^{3}}|[\dot{\Delta}_q,\nabla u]\nabla v||( \dot{\Delta}_qv+\tau \dot{\Delta}_qw)|
\textup{d}x +\int_{\mathbb{R}^{3}}|\nabla u\nabla \dot{\Delta}_qv||( \dot{\Delta}_qv+\tau \dot{\Delta}_qw)|
\textup{d}x\\
:=&\,\mathrm{\mathbf{T}}_1+\mathrm{\mathbf{T}}_2+\mathrm{\mathbf{T}}_3+\mathrm{\mathbf{T}}_4. 
\end{aligned}
\end{equation}
Our next goal is to estimate the terms $\mathrm{\mathbf{T}}_i,\, i=1,\dots,4$. We begin  
by  estimating  $\mathrm{\mathbf{T}}_4$. We have 
\begin{equation}
\begin{aligned}
\mathrm{\mathbf{T}}_4\lesssim &\,\int_{\mathbb{R}^{3}}|\nabla u\nabla \dot{\Delta}_qv|| \dot{\Delta}_qv| 
\textup{d}x+\int_{\mathbb{R}^{3}}|\nabla u\nabla \dot{\Delta}_qv|| \dot{\Delta}_qw| 
\textup{d}x\\
:=&\,\mathrm{\mathbf{J}}_1+\mathrm{\mathbf{J}}_2.
\end{aligned}
\end{equation}
 To estimate $\mathrm{\mathbf{J}}_1$, we have 
  by H\"older's inequality
\begin{eqnarray}  \label{J_1_Estimate}
\mathrm{\mathbf{J}}_1\lesssim  \Vert \dot{\Delta}_q v\Vert_{L^6}\Vert \nabla u\Vert_{L^3}\Vert \nabla
\dot{\Delta}_qv\Vert_{L^2}.
\end{eqnarray}
Now, applying the interpolation inequality, which holds for $n=3$, 
\begin{eqnarray}\label{Estimate_f_N_3}
\Vert f\Vert_{L^3}\leq C \Vert f\Vert_{L^2}^{1/2}\Vert \nabla
f\Vert_{L^2}^{1/2},
\end{eqnarray} 
we obtain
\begin{eqnarray}\label{Nabla_1_u}
\Vert \nabla u\Vert_{L^3}\lesssim \Vert \nabla u\Vert_{L^2}^{1/2}\Vert
\nabla^2 u\Vert_{L^2}^{1/2}.
\end{eqnarray}
This together with the Sobolev embedding $\dot{H}^1(\R^3)\hookrightarrow L^6(\R^3)$,
 yields 
  \begin{equation}  \label{J_1_Estimate_Main}
  \begin{aligned}
\mathrm{\mathbf{J}}_1\lesssim& \,\Vert \nabla u\Vert_{L^2}^{1/2}\Vert \nabla^2 u\Vert_{L^2}^{1/2}
\Vert \nabla \dot{\Delta}_q  v\Vert_{L^2}^2  \\
\lesssim&\, \big(\Vert \nabla u\Vert_{L^2}+\Vert \nabla^2 u\Vert_{L^2}\big)\Vert \nabla\dot{\Delta}_q 
v\Vert_{L^2}^2.
\end{aligned}
\end{equation}
The estimate of $\mathrm{\mathbf{J}}_2$ is  straightforward, so, we have 
\begin{equation}\label{J_2_Estimate_Main}
\begin{aligned}
\mathrm{\mathbf{J}}_2\lesssim&\, \Vert \nabla u\Vert_{L^\infty}\Vert \nabla \dot{\Delta}_q v\Vert_{L^2}\Vert
\dot{\Delta}_qw\Vert_{L^2}\\
\lesssim&\,\Vert \nabla u\Vert_{\dot{B}^{3/2}_{2,1}}\Vert \nabla \dot{\Delta}_q v\Vert_{L^2}\Vert
\dot{\Delta}_qw\Vert_{L^2}. 
\end{aligned}
\end{equation}
Consequently, collecting \eqref{J_1_Estimate_Main} and \eqref{J_2_Estimate_Main} and using H\"older's inequality,   we obtain 
\begin{equation}
\begin{aligned}
\mathrm{\mathbf{T}}_4\lesssim &\,\big(\Vert \nabla u\Vert_{L^2}+\Vert \nabla^2 u\Vert_{L^2}+\Vert \nabla u\Vert_{\dot{B}^{3/2}_{2,1}}\big)\Big(\Vert \nabla\dot{\Delta}_q 
v\Vert_{L^2}^2+\Vert
\dot{\Delta}_qw\Vert_{L^2}^2\Big).
\end{aligned}
\end{equation}
Consequently,  this yields, by using \eqref{Mink},  
\begin{equation}\label{T_4_Estimate}
\begin{aligned}
\int_0^t\mathrm{\mathbf{T}}_4(\sigma)\textup{d}\sigma\lesssim&\,\big(\mathcal{E}[\mathbf{U}](t)+\Vert \nabla u\Vert_{\widetilde{L}_t^\infty(\dot{B}^{3/2}_{2,1})}\big)\mathcal{D}^2[\dot{\Delta}_q \mathbf{U}](t). 
\end{aligned}
\end{equation}
Next, we estimate $\mathrm{\mathbf{T}}_2$. We have 
\begin{equation}
\begin{aligned}
\mathrm{\mathbf{T}}_2=&\,\int_{\mathbb{R}^{3}}|v\dot{\Delta}_qw||( \dot{\Delta}_qv+\tau \dot{\Delta}_qw)|\textup{d}x\\
\lesssim &\,\int_{\mathbb{R}^{3}} |v\dot{\Delta}_qw| |\dot{\Delta}_qv|\textup{d}x+\int_{\mathbb{R}^{3}} |v||\dot{\Delta}_qw|^2 \textup{d}x\\
:=&\, \mathrm{\mathbf{J}}_3+\mathrm{\mathbf{J}}_4. 
\end{aligned}
\end{equation}
Using \eqref{Estimate_f_N_3}, 
we obtain 
\begin{equation}\label{J_3_Main}
\begin{aligned}
\mathrm{\mathbf{J}}_3\lesssim &\,\Vert v\Vert_{L^3}\Vert   \dot{\Delta}_qv\Vert_{L^6}\Vert\dot{\Delta}_qw\Vert_{L^2}\\
\lesssim&\, \Vert v\Vert_{L^2}^{1/2}\Vert \nabla v\Vert_{L^2}^{1/2}\Vert \dot{\Delta}_q\nabla v\Vert_{L^2}\Vert\dot{\Delta}_qw\Vert_{L^2}\\
\lesssim&\,(\Vert v\Vert_{L^2}+\Vert \nabla v\Vert_{L^2})\Big(\Vert \dot{\Delta}_q\nabla v\Vert_{L^2}^2+\Vert\dot{\Delta}_qw\Vert_{L^2}^2\Big). 
\end{aligned}
\end{equation}
Furthermore,  we estimate $\mathrm{\mathbf{J}}_4$ as 
\begin{equation}\label{J_4_Main}
\begin{aligned}
\mathrm{\mathbf{J}}_4\lesssim &\,\Vert v\Vert_{L^\infty}\Vert\dot{\Delta}_qw\Vert_{L^2}^2
\lesssim \Vert v \Vert_{\dot{B}^{3/2}_{2,1}}\Vert\dot{\Delta}_qw\Vert_{L^2}^2. 
\end{aligned}
\end{equation}
Collecting \eqref{J_3_Main} and \eqref{J_4_Main}, we obtain 
\begin{equation}
\mathrm{\mathbf{T}}_2\lesssim \big(\Vert v\Vert_{L^2}+\Vert \nabla v\Vert_{L^2}+\Vert v \Vert_{\dot{B}^{3/2}_{2,1}}\big)\Big(\Vert \dot{\Delta}_q\nabla v\Vert_{L^2}^2+\Vert\dot{\Delta}_qw\Vert_{L^2}^2\Big). 
\end{equation}
This yields, by using H\"older's inequality and \eqref{Mink}, 
\begin{equation}\label{T_2_Main}
\begin{aligned}
\int_0^t\mathrm{\mathbf{T}}_2(\sigma)\textup{d}\sigma\lesssim \big(\mathcal{E}[\mathbf{U}](t)+\Vert v \Vert_{\widetilde{L}_t^\infty(\dot{B}^{3/2}_{2,1}}\big) \mathcal{D}^2[\dot{\Delta}_q \mathbf{U}](t).
\end{aligned}
\end{equation}
Now, to estimate $\mathrm{\mathbf{T}}_1$,  we have 
\begin{equation}\label{T_1_Com}
\begin{aligned}  
\mathrm{\mathbf{T}}_1=&\,\int_{\mathbb{R}^{3}}|[\dot{\Delta}_q,v]w||( \dot{\Delta}_qv+\tau \dot{\Delta}_qw)|\textup{d}x\\
\lesssim&\,
\big\Vert [\dot{\Delta}_q,v]w\big\Vert _{L^{2}}\big\Vert (
\dot{\Delta}_qv+\tau \dot{\Delta}_qw) \big\Vert _{L^{2}}.
\end{aligned}
\end{equation}
Applying \eqref{Commu_A_1}, we find  
\begin{equation}
\begin{aligned}
\big\Vert [\dot{\Delta}_q,v]w\big\Vert _{L^{2}}\lesssim  c_q 2^{-\frac{3q}{2}}\Vert v\Vert_{\dot{B}^{3/2}_{2,1}}\Vert w\Vert_{\dot{B}^{1/2}_{2,1}}.
\end{aligned}
\end{equation}
This yields 
\begin{equation}\label{T_1_Main}
\begin{aligned}
\int_0^t\mathrm{\mathbf{T}}_1(\sigma) \textup{d}\sigma\lesssim&\, c_q 2^{-\frac{3q}{2}}\big\Vert (
\dot{\Delta}_qv+\tau \dot{\Delta}_qw)\big\Vert _{L_t^\infty(L^{2})}\Vert v\Vert_{L_t^2(\dot{B}^{3/2}_{2,1})}\Vert w\Vert_{L_t^2(\dot{B}^{1/2}_{2,1})}\\
\lesssim&\,c_q 2^{-\frac{3q}{2}}\big\Vert (
\dot{\Delta}_qv+\tau \dot{\Delta}_qw)\big\Vert _{L_t^\infty(L^{2})}\Vert v\Vert_{\widetilde{L}_t^2(\dot{B}^{3/2}_{2,1})}\Vert w\Vert_{\widetilde{L}_t^2(\dot{B}^{1/2}_{2,1})}\\
\lesssim&\,c_q 2^{-\frac{3q}{2}}\mathcal{E}[\dot{\Delta}_q \mathbf{U}](t)\Vert v\Vert_{\widetilde{L}_t^2(\dot{B}^{3/2}_{2,1})}\Vert w\Vert_{\widetilde{L}_t^2(\dot{B}^{1/2}_{2,1})},
\end{aligned}
\end{equation}
where we have used \eqref{Mink}.

\noindent The term $\mathrm{\mathbf{T}}_3$  can be estimated similarly. We have 
\begin{equation}
\begin{aligned}
\mathrm{\mathbf{T}}_3 \lesssim &\,\big\Vert [\dot{\Delta}_q,\nabla u]\nabla v\big\Vert _{L^{2}}\big\Vert (
\dot{\Delta}_qv+\tau \dot{\Delta}_qw) \big\Vert _{L^{2}}\\
\lesssim&\, c_q 2^{-\frac{3q}{2}}\Vert \nabla u\Vert_{\dot{B}^{3/2}_{2,1}}\Vert \nabla v\Vert_{\dot{B}^{1/2}_{2,1}}\big\Vert (
\dot{\Delta}_qv+\tau \dot{\Delta}_qw) \big\Vert _{L^{2}}.   
\end{aligned}
\end{equation}
This yields as above, 
\begin{equation}\label{T_3_Main}
\begin{aligned}
\int_0^t\mathrm{\mathbf{T}}_3(\sigma) \textup{d}\sigma\lesssim&\,c_q 2^{-\frac{3q}{2}}\big\Vert (
\dot{\Delta}_qv+\tau \dot{\Delta}_qw)\big\Vert _{L_t^\infty(L^{2})}\Vert \nabla u\Vert_{\widetilde{L}_t^2(\dot{B}^{3/2}_{2,1})}\Vert \nabla  v\Vert_{\widetilde{L}_t^2(\dot{B}^{1/2}_{2,1})}\\
\lesssim&\,c_q 2^{-\frac{3q}{2}}\mathcal{E}[\dot{\Delta}_q \mathbf{U}](t)\Vert \nabla u\Vert_{\widetilde{L}_t^2(\dot{B}^{3/2}_{2,1})}\Vert \nabla  v\Vert_{\widetilde{L}_t^2(\dot{B}^{1/2}_{2,1})}. 
\end{aligned}
\end{equation}
Collecting the estimates \eqref{T_4_Estimate}, \eqref{T_2_Main}, \eqref{T_1_Main} and \eqref{T_3_Main} and using H\"older's inequality with respect to $t$, we obtain \eqref{I_1_Estimate}. This finishes the proof of Lemma \ref{Lemma_I_1}.   
 \end{proof}
 
 \begin{proof}[Proof of Proposition \ref{Main_Proposition}]
 Using \eqref{Eqv_Norms} together with \eqref{Mink} and recalling  \eqref{M_Functional},  we have 
 \begin{equation}\label{L_infty_M_Estimate}
\Vert v\Vert_{L_t^\infty(\dot{B}^{3/2}_{2,1})}+\Vert \nabla v\Vert_{L_t^\infty(\dot{B}^{3/2}_{2,1})}+\Vert \nabla u\Vert_{L_t^\infty(\dot{B}^{3/2}_{2,1})}+\Vert \nabla^2 u\Vert_{\widetilde{L}_t^\infty(\dot{B}^{3/2}_{2,1})}\lesssim M(t). 
\end{equation}
 Now, plugging \eqref{I_2_4_Main_Estimate}, \eqref{I_3_5_Main_Estimate} and  \eqref{I_1_Estimate} into \eqref{Main_Freq_Energy} and exploiting \eqref{L_infty_M_Estimate}, 
 we obtain 
 \begin{equation}\label{Main_Freq_Energy_2}
\begin{aligned}
&\quad\mathcal{E}^2[\dot{\Delta}_q \mathbf{U}](t)+\mathcal{D}^{2}[\dot{\Delta}_q \mathbf{U}](t)\\
&\lesssim \mathcal{E}^2[\dot{\Delta}_q \mathbf{U}](0)
+\big(\mathcal{E}[\mathbf{U}](t)+M(t)\big)\mathcal{D}^2[\dot{\Delta}_q \mathbf{U}](t)\\
&\,+  c_q 2^{-\frac{3q}{2}}\mathcal{E}[\dot{\Delta}_q \mathbf{U}](t)\Big(\Vert v\Vert_{\widetilde{L}_t^2(\dot{B}^{3/2}_{2,1})}\Vert w\Vert_{\widetilde{L}_t^2(\dot{B}^{1/2}_{2,1})}
+\Vert \nabla u\Vert_{\widetilde{L}_t^2(\dot{B}^{3/2}_{2,1})}\Vert \nabla  v\Vert_{\widetilde{L}_t^2(\dot{B}^{1/2}_{2,1})}\Big)\\
&\,+c_q 2^{-\frac{3q}{2}}\Big(\Vert v\Vert_{\widetilde{L}_t^{2}(\dot{B}^{3/2}_{2,1})}+\Vert \nabla v\Vert_{\widetilde{L}_t^{2}(\dot{B}^{3/2}_{2,1})}
+\Vert \nabla u\Vert_{\widetilde{L}_t^{2}(\dot{B}^{3/2}_{2,1})}
+\Vert \nabla^2 u\Vert_{\widetilde{L}_t^{2}(\dot{B}^{3/2}_{2,1})}\Big)M(t)\mathcal{D}[\dot{\Delta}_q \mathbf{U}](t). 
\end{aligned}  
\end{equation}
Using \eqref{Eqv_Norms}, we obtain  
\begin{equation}
\begin{aligned}
\Vert v\Vert_{\widetilde{L}_t^2(\dot{B}^{3/2}_{2,1})}+\Vert \nabla u\Vert_{\widetilde{L}_t^2(\dot{B}^{3/2}_{2,1})}\lesssim &\,\Vert \nabla v\Vert_{\widetilde{L}_t^2(\dot{B}^{1/2}_{2,1})}+\Vert \nabla^2 u\Vert_{\widetilde{L}_t^2(\dot{B}^{1/2}_{2,1})}\\
\lesssim&\,\Vert \mathbf{U}\Vert_{\widetilde{L}^2_t(\dot{\mathbb{B}}_{2,1}^{1/2})}. 
\end{aligned}
\end{equation}
Multiplying the above identity by $2^{3q}$, using Young's inequality  and  the elementary  inequality $\sqrt{a+b}\leq \sqrt{a}+\sqrt{b}\leq \sqrt{2(a+b)}, \, a,b>0$, we obtain  
\begin{equation}\label{Main_Freq_Energy_3}
\begin{aligned}
&2^{\frac{3q}{2}}\mathcal{E}[\dot{\Delta}_q \mathbf{U}](t)+2^{\frac{3q}{2}}\mathcal{D}[\dot{\Delta}_q \mathbf{U}](t)\\
\lesssim&\,2^{\frac{3q}{2}}\mathcal{E}[\dot{\Delta}_q \mathbf{U}](0)+\sqrt{\mathcal{E}[\mathbf{U}](t)+M(t)}2^{\frac{3q}{2}}\mathcal{D}[\dot{\Delta}_q \mathbf{U}](t)\\
&+\sqrt{c_q 2^{\frac{3q}{2}}\mathcal{E}[\dot{\Delta}_q \mathbf{U}](t)}\Vert \mathbf{U}\Vert_{\widetilde{L}^2_t(\dot{\mathbb{B}}_{2,1}^{1/2})}\\
&+\sqrt{M(t)}\Big(\sqrt{\Vert \mathbf{U}\Vert_{\widetilde{L}^2_t(\dot{\mathbb{B}}_{2,1}^{1/2})}+\Vert \mathbf{U}\Vert_{\widetilde{L}^2_t(\dot{\mathbb{B}}_{2,1}^{3/2})}}\Big)\sqrt{c_q 2^{\frac{3q}{2}}\mathcal{D}[\dot{\Delta}_q \mathbf{U}](t)},
\end{aligned}
\end{equation}
Exploiting  \eqref{Besov_Dya_Ineq}, we get 
\begin{equation}\label{Main_Freq_Energy_4}
\begin{aligned}
&2^{\frac{3q}{2}}\mathcal{E}[\dot{\Delta}_q \mathbf{U}](t)+2^{\frac{3q}{2}}\mathcal{D}[\dot{\Delta}_q \mathbf{U}](t)\\
\lesssim&\,c_q\Vert \mathbf{U}(0)\Vert_{\dot{\mathbf{\mathcal{B}}}_{2,1}^{3/2}}+c_q\sqrt{\mathcal{E}[\mathbf{U}](t)+M(t)}\Vert \mathbf{U}\Vert_{\widetilde{L}^2_t(\dot{\mathbb{B}}_{2,1}^{3/2})}\\
&+c_q\sqrt{\Vert \mathbf{U}\Vert_{\widetilde{L}_t^\infty(\dot{\mathbf{\mathcal{B}}}_{2,1}^{3/2})}}\Vert \mathbf{U}\Vert_{\widetilde{L}^2_t(\dot{\mathbb{B}}_{2,1}^{1/2})}\\
&+c_q\sqrt{M(t)}\Big(\sqrt{\Vert \mathbf{U}\Vert_{\widetilde{L}^2_t(\dot{\mathbb{B}}_{2,1}^{1/2})}+\Vert \mathbf{U}\Vert_{\widetilde{L}^2_t(\dot{\mathbb{B}}_{2,1}^{3/2})}}\Big)\sqrt{\Vert \mathbf{U}\Vert_{\widetilde{L}^2_t(\dot{\mathbb{B}}_{2,1}^{3/2})}}.
\end{aligned}
\end{equation}
This  implies 
\begin{equation}\label{Main_Freq_Energy_5}
\begin{aligned}
&2^{\frac{3q}{2}}\mathcal{E}[\dot{\Delta}_q \mathbf{U}](t)+2^{\frac{3q}{2}}\mathcal{D}[\dot{\Delta}_q \mathbf{U}](t)\\
\lesssim&\,c_q\Vert \mathbf{U}(0)\Vert_{\dot{\mathbf{\mathcal{B}}}_{2,1}^{3/2}}+c_q\sqrt{\mathcal{E}[\mathbf{U}](t)+M(t)+\Vert \mathbf{U}\Vert_{\widetilde{L}_t^\infty(\dot{\mathbf{\mathcal{B}}}_{2,1}^{3/2})}}\\
&\qquad\qquad\qquad\qquad\times\Big(\Vert \mathbf{U}\Vert_{\widetilde{L}^2_t(\dot{\mathbb{B}}_{2,1}^{1/2})}+\Vert \mathbf{U}\Vert_{\widetilde{L}^2_t(\dot{\mathbb{B}}_{2,1}^{3/2})}\Big). 
\end{aligned}
\end{equation}
Hence, summing up on $q\in \Z$, we arrive at 
\begin{equation}\label{Main_Homogenous_Estimate}
\begin{aligned}
&\Vert \mathbf{U}\Vert_{\widetilde{L}_t^\infty(\dot{\mathbf{\mathcal{B}}}_{2,1}^{3/2})}+\Vert \mathbf{U}\Vert_{\widetilde{L}^2_t(\dot{\mathbb{B}}_{2,1}^{3/2})}\\
\lesssim&\,\Vert\mathbf{U}(0)\Vert_{\dot{\mathbf{\mathcal{B}}}_{2,1}^{3/2}}+\sqrt{\mathcal{E}[\mathbf{U}](t)+M(t)+\Vert \mathbf{U}\Vert_{\widetilde{L}_t^\infty(\dot{\mathbf{\mathcal{B}}}_{2,1}^{3/2})}}\\
&\qquad \qquad \qquad\times\Big(\Vert \mathbf{U}\Vert_{\widetilde{L}^2_t(\dot{\mathbb{B}}_{2,1}^{1/2})}+\Vert \mathbf{U}\Vert_{\widetilde{L}^2_t(\dot{\mathbb{B}}_{2,1}^{3/2})}\Big). 
\end{aligned}
\end{equation}
Combining \eqref{Main_Estimate_Order_0} and \eqref{Main_Homogenous_Estimate} and using the fact that $$M_0(t)+M(t)\lesssim\Vert \mathbf{U}(t)\Vert_{\widetilde{L}_t^\infty(\mathbf{\mathcal{B}}_{2,1}^{3/2})} ,$$ we conclude that \eqref{Main_inHomogenous_Estimate_1} is satisfied. This finishes the proof of Proposition \ref{Main_Proposition}
\end{proof}
\subsection{Proof of Theorem \ref{Main_Theorem_Nonl}}\label{Proof_Theorem_1}
We put 
\begin{equation}
\mathrm{Y}(t):=\Vert \mathbf{U}(t)\Vert_{\widetilde{L}_t^\infty(\mathbf{\mathcal{B}}_{2,1}^{3/2})}+\Vert \mathbf{U}(t)\Vert_{\widetilde{L}^2_t(\mathbb{B}_{2,1}^{3/2})}.  
\end{equation}
 It is clear that $\widetilde{L}_t^\infty(\mathbf{\mathcal{B}}_{2,1}^{s_2})\hookrightarrow \widetilde{L}_t^\infty(\mathbf{\mathcal{B}}_{2,1}^{s_1})$ and $\widetilde{L}^2_t(\mathbb{B}_{2,1}^{s_2})\hookrightarrow \widetilde{L}^2_t(\mathbb{B}_{2,1}^{s_1})$ for $s_2\geq s_1\geq 0.$

From, \eqref{Main_inHomogenous_Estimate_1} yields
\begin{eqnarray}\label{Main_Y_Estimate}
\mathrm{Y}(t)\leq C\mathrm{Y}(0)+C\mathrm{Y}^{3/2}(t),
\end{eqnarray}
where $C$ is a positive constant that does not depend on $t$. 
From \eqref{Main_Y_Estimate}, we conclude in a standard way (see Lemma \ref{Lemma_Stauss}) that there is $\delta>0$ small enough, such that if $\mathrm{Y}(0)=\Vert\mathbf{ U}_0\Vert_{\mathbf{\mathcal{B}}_{2,1}^{3/2}}\leq \delta $, then there is $K>0$, independent of time,  such that
\begin{eqnarray*}
\mathrm{Y}(t)\leq K.
\end{eqnarray*}
 This uniform estimate allows to continue the local solution to be global in time.
%%%%%%%%%%%%%%%%%%%%%%%%%%%%% 
 \section{Decay estimates of the linear  equation}\label{Decay_Estimate_Linearized_Model}
 In this section, we show  decay estimates for some energy norms for  the solution of 
 linearized system 
 \begin{subequations}
\label{Main_problem_Linearized}
\begin{eqnarray}
\tau u_{ttt}+ u_{tt}-\Delta u-\beta \Delta u_{t}=0, \qquad x\in \R^n,\, t>0, \label{MGT_1_Linearized}
\end{eqnarray}
with the initial conditions:
\begin{eqnarray}
u(t=0)=u_{0},\qquad u_{t}(t=0)=u_{1}\qquad u_{tt}(t=0)=u_{2},
\label{Initial_Condition_Linearized}
\end{eqnarray}
\end{subequations}
  by assuming additionally that our initial data are  in  $\dot{B}^{-\rho}_{p,\infty}$ with  $1-\frac{1}{p}=\frac{\rho}{n}$.  
 This improves the decay estimates given in \cite{PellSaid_2019_1},  due to the embedding  $L^1\hookrightarrow \dot{B}^{-\rho}_{p,\infty}$ (see Lemma \ref{Embedding_Lemma}). These decay estimates of the linearized problem are crucial in the proof of the decay estimates of the nonlinear problem stated in Theorem \ref{Decay_Estimate_Theorem}. 
 We define the vector  $\mathbf{V}=(u_t+\tau u_{tt}, \nabla (u+\tau u_t), \nabla u_t)$ and   
 show the following   decay estimates for the vector solution $\mathbf{V}$, where here we are not restricted to the 3D case.  
 
 \begin{theorem}\label{Theorem_Decay}
 Assume that $0<\tau < \beta$. Suppose that  $\mathbf{V}_0\in \dot{B}_{2,1}^\sigma(\R^n)\cap \dot{B}_{2,\infty}^{-s}(\R^n)$ for $\sigma\in \R$, $s\in \R$ satisfying $\sigma+s>0$, then  for all $t\geq 0$, the solution of \eqref{Main_problem_Linearized} has the following decay estimate:  
 \begin{subequations}\label{Decay_Estimates_V}
 \begin{equation}\label{Decay_Homog}
\begin{aligned}
\Vert \mathbf{V} (t)\Vert_{\dot{B}_{2,1}^{\sigma}}\lesssim \Vert \mathbf{V}_0\Vert_{\dot{B}_{2,1}^\sigma\cap \dot{B}_{2,\infty}^{-s}}(1+t)^{-\frac{\sigma+s}{2%
}},
\end{aligned}
\end{equation}
Also if $\mathbf{V}_0\in \dot{B}_{2,1}^\sigma(\R^n)\cap \dot{B}_{2,\infty}^{-s}(\R^n)$ for $\sigma\geq 0$ and $s>0$, then the solution of \eqref{Main_problem_Linearized} has the following decay estimate: 
\begin{equation}\label{Decay_InHomog}
\begin{aligned}
\Vert\Lambda^k \mathbf{V} (t)\Vert_{B_{2,1}^{\sigma-k}}\lesssim \Vert \mathbf{V}_0\Vert_{\dot{B}_{2,1}^\sigma\cap \dot{B}_{2,\infty}^{-s}}(1+t)^{-\frac{k+s}{2%
}}
\end{aligned}
\end{equation}
for $0\leq k\leq \sigma$. 
\end{subequations}
 \end{theorem}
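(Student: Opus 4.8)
The plan is to pass to the Fourier side, establish a sharp pointwise decay estimate for $\widehat{\mathbf{V}}(\xi,t)$, and then sum over a Littlewood--Paley decomposition. First I would rewrite \eqref{Main_problem_Linearized} as a first-order system for $\mathbf{V}=(u_t+\tau u_{tt},\nabla(u+\tau u_t),\nabla u_t)$; a Fourier transform turns it into a linear ODE in $t$ with $\xi$-dependent coefficients. Under the hypothesis $0<\tau<\beta$ one can, as in \cite{PellSaid_2019_1}, construct a Lyapunov functional $\mathcal{L}(\xi,t)\approx|\widehat{\mathbf{V}}(\xi,t)|^2$ with $\frac{d}{dt}\mathcal{L}(\xi,t)+c\,\rho(\xi)\,\mathcal{L}(\xi,t)\le0$, where $\rho(\xi)=\dfrac{|\xi|^2}{1+|\xi|^2}$ and the condition $\beta>\tau$ enters through the Routh--Hurwitz-type sign condition for $\tau\lambda^3+\lambda^2+\beta|\xi|^2\lambda+|\xi|^2$. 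Gronwall then gives $|\widehat{\mathbf{V}}(\xi,t)|\lesssim e^{-c\rho(\xi)t}\,|\widehat{\mathbf{V}}_0(\xi)|$, and localizing with $\dot{\Delta}_q$ and using Plancherel yields, for every $q\in\Z$,
\begin{equation*}
\|\dot{\Delta}_q\mathbf{V}(t)\|_{L^2}\lesssim e^{-c\rho_q t}\,\|\dot{\Delta}_q\mathbf{V}_0\|_{L^2},\qquad \rho_q\approx\min\{2^{2q},1\},
\end{equation*}
that is, parabolic decay $\rho_q\approx2^{2q}$ on the low frequencies $q\le0$ and uniform exponential decay $\rho_q\approx1$ (no regularity loss) on $q\ge0$.

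For \eqref{Decay_Homog} I would multiply this localized bound by $2^{q\sigma}$ and sum over $q\in\Z$, splitting at $q=0$. On the range $q\ge0$, $\rho_q\gtrsim1$ lets me factor out $e^{-ct}\lesssim_N(1+t)^{-N}$, leaving $\sum_{q\ge0}2^{q\sigma}\|\dot{\Delta}_q\mathbf{V}_0\|_{L^2}\le\|\mathbf{V}_0\|_{\dot{B}^{\sigma}_{2,1}}$. On the range $q\le0$ I use $\|\dot{\Delta}_q\mathbf{V}_0\|_{L^2}\le2^{qs}\|\mathbf{V}_0\|_{\dot{B}^{-s}_{2,\infty}}$, which reduces matters to $\sum_{q\le0}2^{q(\sigma+s)}e^{-c2^{2q}t}$; since $\sigma+s>0$, an elementary series/integral comparison (of the type collected in Appendix \ref{Appendix_A}, essentially $\int_{-\infty}^0 2^{\alpha x}e^{-c2^{2x}t}\,dx\lesssim(1+t)^{-\alpha/2}$ after the substitution $y=2^{2x}t$, with the range $t\le1$ handled trivially by the energy bound) shows it is $\lesssim(1+t)^{-(\sigma+s)/2}$. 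Adding the two ranges gives \eqref{Decay_Homog}.

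For \eqref{Decay_InHomog} I would use $\|f\|_{B^{\sigma-k}_{2,1}}\approx\|S_0f\|_{L^2}+\sum_{q\ge1}2^{q(\sigma-k)}\|\dot{\Delta}_qf\|_{L^2}$ together with $\|\dot{\Delta}_q\Lambda^k g\|_{L^2}\approx2^{qk}\|\dot{\Delta}_q g\|_{L^2}$. On the high frequencies $q\ge1$, the weight $2^{q(\sigma-k)}$ absorbs the factor $2^{qk}$ from $\Lambda^k$ into $2^{q\sigma}$, so exactly as before this part is $\lesssim e^{-ct}\|\mathbf{V}_0\|_{\dot{B}^\sigma_{2,1}}\lesssim(1+t)^{-(k+s)/2}\|\mathbf{V}_0\|_{\dot{B}^\sigma_{2,1}}$. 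For the low-frequency block, Plancherel and the localized bound give
\begin{equation*}
\|S_0\Lambda^k\mathbf{V}(t)\|_{L^2}^2\lesssim\sum_{q\le0}2^{2qk}e^{-c2^{2q}t}\|\dot{\Delta}_q\mathbf{V}_0\|_{L^2}^2\le\|\mathbf{V}_0\|_{\dot{B}^{-s}_{2,\infty}}^2\sum_{q\le0}2^{2q(k+s)}e^{-c2^{2q}t}\lesssim\|\mathbf{V}_0\|_{\dot{B}^{-s}_{2,\infty}}^2(1+t)^{-(k+s)},
\end{equation*}
the last step being the same comparison (valid since $k+s>0$). Collecting the two contributions, with $0\le k\le\sigma$ ensuring that the target index $\sigma-k$ is nonnegative, yields \eqref{Decay_InHomog}.

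I expect the genuinely delicate point to be the first step: producing the sharp pointwise Fourier estimate with the precise dissipation $\rho(\xi)=|\xi|^2/(1+|\xi|^2)$ — both the $|\xi|^2$ parabolic rate at low frequencies and honest exponential decay (no loss of derivatives) at high frequencies — uniformly in $\xi$ and under the threshold $0<\tau<\beta$. This forces one to design a Lyapunov functional with $\xi$-dependent weights and to check, through the cubic characteristic polynomial $\tau\lambda^3+\lambda^2+\beta|\xi|^2\lambda+|\xi|^2=0$, that the real parts of all three roots remain strictly negative and bounded away from $0$ for $|\xi|\ge1$ while behaving like $-c|\xi|^2$ for $|\xi|\le1$. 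Once this energy/spectral information is secured, the Littlewood--Paley summation and the integral comparison above are routine.
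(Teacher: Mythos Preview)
Your argument for \eqref{Decay_Homog} is essentially the paper's: localize the pointwise Fourier bound coming from the Lyapunov functional of \cite{PellSaid_2019_1}, split at $q=0$, use exponential decay on high frequencies and the dyadic heat-kernel sum $\sum_{q\le0}2^{q(\sigma+s)}e^{-c2^{2q}t}\lesssim(1+t)^{-(\sigma+s)/2}$ on low frequencies.

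Where you differ is in the low-frequency part of \eqref{Decay_InHomog}. You treat $\|\Delta_{-1}\Lambda^k\mathbf{V}\|_{L^2}$ by decomposing $S_0=\sum_{q<0}\dot{\Delta}_q$, applying the localized pointwise bound block by block, and summing the same weighted heat-kernel series with exponent $k+s$. The paper instead argues at the level of the differential inequality: from $\frac{d}{dt}\mathscr{L}[\Delta_{-1}\mathbf{U}]+\eta_2\|\Lambda^{k+1}\Delta_{-1}\mathbf{V}\|_{L^2}^2\le0$ it invokes the Guo--Wang interpolation $\|\Lambda^{k+1}\Delta_{-1}\mathbf{V}\|_{L^2}\gtrsim\|\Lambda^k\Delta_{-1}\mathbf{V}\|_{L^2}^{1+\frac{1}{k+s}}\|\mathbf{V}\|_{\dot{B}^{-s}_{2,\infty}}^{-\frac{1}{k+s}}$, propagates the bound $\|\mathbf{V}(t)\|_{\dot{B}^{-s}_{2,\infty}}\lesssim\|\mathbf{V}_0\|_{\dot{B}^{-s}_{2,\infty}}$, and solves the resulting power-type ODE. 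Both routes are correct here; yours is shorter and more elementary because the sharp pointwise estimate $|\widehat{\dot{\Delta}_q\mathbf{V}}|\lesssim e^{-c\rho_q t}|\widehat{\dot{\Delta}_q\mathbf{V}_0}|$ is already in hand, while the paper's Lyapunov--interpolation method is the more robust template, applicable in situations where one only has an energy--dissipation balance rather than an explicit semigroup bound.
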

 The result of Theorem \ref{Theorem_Decay} provides the following corollary. 
 \begin{corollary}\label{Decay_Estimate_L_p}
 Under the assumptions of Theorem \ref{Theorem_Decay}, it holds that for $2\leq p<\infty$ and $1\leq q<p$, 
 \begin{equation}\label{L_p_Decay}
\Vert \mathbf{V}(t)\Vert_{L^p}\lesssim \Vert \mathbf{V}_0\Vert_{\dot{B}^{n(\frac{1}{2}-\frac{1}{p})}_{2,1}\cap \dot{B}_{2,\infty}^{-n(\frac{1}{q}-\frac{1}{2})}}(1+t)^{-\frac{n}{2%
}(\frac{1}{q}-\frac{1}{p})}. 
\end{equation}
 \end{corollary}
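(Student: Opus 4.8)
The plan is to derive Corollary \ref{Decay_Estimate_L_p} directly from the homogeneous decay estimate \eqref{Decay_Homog} of Theorem \ref{Theorem_Decay} combined with a single Besov embedding, so this is essentially a matter of choosing the right indices. First I would recall the sharp Sobolev-type embedding for homogeneous Besov spaces recorded in Appendix \ref{Appendix_A}, namely $\dot{B}^{n(\frac{1}{2}-\frac{1}{p})}_{2,1}(\R^n)\hookrightarrow L^p(\R^n)$ for every $2\leq p<\infty$. This follows from the Bernstein inequalities $\|\dot{\Delta}_q f\|_{L^p}\lesssim 2^{qn(\frac{1}{2}-\frac{1}{p})}\|\dot{\Delta}_q f\|_{L^2}$ together with the $\ell^1$-summability in $q$ built into the index $\dot{B}^{\bullet}_{2,1}$; note the restriction $p<\infty$ is essential, since for $p=\infty$ one would instead need $\dot{B}^{n/2}_{2,1}\hookrightarrow L^\infty$, which is why the statement excludes $p=\infty$.

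Next I would apply Theorem \ref{Theorem_Decay} with the choice of parameters
\[
\sigma = n\left(\tfrac{1}{2}-\tfrac{1}{p}\right)\geq 0, \qquad s = n\left(\tfrac{1}{q}-\tfrac{1}{2}\right),
\]
and check the compatibility condition $\sigma+s>0$ demanded by \eqref{Decay_Homog}. Indeed
\[
\sigma+s = n\left(\tfrac{1}{q}-\tfrac{1}{p}\right)>0
\]
because $1\leq q<p$. (When $q\geq 2$ the constraint $q<p$ forces $p>2$, hence $\sigma>0$; when $q<2$ one has $s>0$; in all cases the sum is strictly positive, so Theorem \ref{Theorem_Decay} is applicable with $\mathbf{V}_0\in \dot{B}^{\sigma}_{2,1}\cap\dot{B}^{-s}_{2,\infty}$.)

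With these choices, \eqref{Decay_Homog} yields
\[
\|\mathbf{V}(t)\|_{\dot{B}^{n(\frac{1}{2}-\frac{1}{p})}_{2,1}} \lesssim \|\mathbf{V}_0\|_{\dot{B}^{n(\frac{1}{2}-\frac{1}{p})}_{2,1}\cap\dot{B}^{-n(\frac{1}{q}-\frac{1}{2})}_{2,\infty}}\,(1+t)^{-\frac{\sigma+s}{2}} = \|\mathbf{V}_0\|_{\dot{B}^{n(\frac{1}{2}-\frac{1}{p})}_{2,1}\cap\dot{B}^{-n(\frac{1}{q}-\frac{1}{2})}_{2,\infty}}\,(1+t)^{-\frac{n}{2}(\frac{1}{q}-\frac{1}{p})},
\]
and the embedding from the first step upgrades the left-hand side from $\dot{B}^{n(\frac{1}{2}-\frac{1}{p})}_{2,1}$ to $L^p$, which is exactly \eqref{L_p_Decay}. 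I do not anticipate a genuine obstacle here; the only points requiring care are keeping the embedding in the homogeneous scale with $p<\infty$ and the index bookkeeping giving $\sigma+s=n(\frac{1}{q}-\frac{1}{p})$. One may optionally remark that, via the embedding $L^q(\R^n)\hookrightarrow\dot{B}^{-n(\frac{1}{q}-\frac{1}{2})}_{2,\infty}(\R^n)$ of Lemma \ref{Embedding_Lemma}, estimate \eqref{L_p_Decay} recovers and slightly strengthens the classical $L^q$--$L^p$ decay rate $(1+t)^{-\frac{n}{2}(\frac{1}{q}-\frac{1}{p})}$.
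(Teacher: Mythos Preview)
Your proposal is correct and follows essentially the same approach as the paper: both apply the homogeneous decay estimate \eqref{Decay_Homog} with $\sigma=n(\tfrac12-\tfrac1p)$ and $s=n(\tfrac1q-\tfrac12)$, then upgrade the left-hand side to $L^p$ via a Besov--Sobolev embedding. The only cosmetic difference is that the paper routes the embedding through the chain $\|\mathbf{V}\|_{L^p}\lesssim\|\mathbf{V}\|_{\dot H^{n(1/2-1/p)}}\lesssim\|\mathbf{V}\|_{\dot B^{n(1/2-1/p)}_{2,1}}$, whereas you appeal directly to Bernstein; your extra care in verifying $\sigma+s=n(\tfrac1q-\tfrac1p)>0$ is welcome and is left implicit in the paper.
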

 \begin{proof}[Proof of Corollary \ref{Decay_Estimate_L_p}] Using for $(2\leq p<\infty)$,  the estimate 
\begin{equation}\label{L_p_Estimate}
\Vert \mathbf{V}\Vert_{L^p}\lesssim \Vert \mathbf{V}\Vert_{\dot{H}^{n(\frac{1}{2}-\frac{1}{p})}}\lesssim \Vert \mathbf{V}\Vert_{\dot{B}^{n(\frac{1}{2}-\frac{1}{p})}_{2,1}}
\end{equation}
we obtain from \eqref{Decay_Homog} and for $s=n(\frac{1}{q}-\frac{1}{2}), \, (1\leq q<p)$,
%\begin{equation}
%\Vert \mathbf{V}(t)\Vert_{L^p}\lesssim \Vert \mathbf{V}_0\Vert_{\dot{B}^{n(\frac{1}{2}-\frac{1}{p})}_{2,1}\cap \dot{B}_{2,\infty}^{-n(\frac{1}{q}-\frac{1}{2})}}(1+t)^{-\frac{n}{2%
%}(\frac{1}{q}-\frac{1}{p})}
%\end{equation}
the estimate  \eqref{L_p_Decay}. 
\end{proof}

\begin{remark}
The decay rate \eqref{L_p_Decay} has been proven  in \cite{PellSaid_2019_1} for $p=2$ and for initial data in $L^1(\R^n)\cap L^2(\R^n)$. Comparing  \eqref{L_p_Decay} to  the decay estimates in \cite{PellSaid_2019_1} we can say that  the assumption $\mathbf{V}_0\in L^1(\R^n)$ is relaxed due to the embedding $L^1(\mathbb{R}%
^n)\hookrightarrow \dot{{B}}_{2,\infty}^{-n/2}(\mathbb{R}^n)$ (see Lemma \ref{Embedding_Lemma}). However the assumption $\mathbf{V}_0\in \dot{B}^{0}_{2,1}(\R^n)$ is more restrictive than $\mathbf{V}_0\in L^2(\R^n)$ due to the embedding $\dot{B}^{0}_{2,1}(\R^n)\hookrightarrow \dot{B}^{0}_{2,2}(\R^n)=L^2(\R^n)$. 
\end{remark}

The following proposition has been proved in \cite[Proposition 3.1]{PellSaid_2019_1}. 
\begin{proposition}\label{Proposition_Linearized}
Assume that $0<\tau < \beta$. 
There exists a functional $\mathcal{L}[\hat{\mathbf{U}}]$ and three positive constants $c_1, c_2$ and $\eta$ such that  for all $t\geq 0$, it holds that 
\begin{equation}
\begin{aligned}
c_1\mathrm{E}_1 [\hat{\mathbf{U}}](t) \leq \mathcal{L}[\hat{\mathbf{U}}](t)\leq c_2 \mathrm{E}_1 [\hat{\mathbf{U}}](t),
\end{aligned}
\end{equation}
and 
\begin{equation}
\frac{\textup{d}}{\textup{d} t}\mathcal{L}[\hat{\mathbf{U}}](t)+\eta \frac{|\xi|^2}{1+|\xi|^2}\mathcal{L}[\hat{\mathbf{U}}](t)\leq 0,  
\end{equation}
where 
\begin{equation}
\begin{aligned}
\mathrm{E}_1 [\hat{\mathbf{U}}](t)=&\,\frac{1}{2} \Big\{|\hat{v}+\tau \hat{w}|^2+\tau (\beta-\tau) |\xi|^2 |\hat{v}|^2+|\xi|^2 |\hat{u}+\tau \hat{v}| ^2\Big\}. \\
\approx &\, |\hat{\mathbf{V}}(t)|^2.
\end{aligned}
\end{equation}

\end{proposition}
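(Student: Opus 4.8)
The plan is the classical Fourier-side Lyapunov-functional (Shizuta--Kawashima-type) method for dissipative systems: start from the natural energy, which dissipates only part of the state, and add a hierarchy of weighted cross terms that manufacture the missing dissipation while keeping the functional equivalent to the natural energy.

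First I would Fourier-transform \eqref{MGT_1_Linearized} and rewrite it, with $\hat v=\hat u_t$, $\hat\phi=\widehat{u+\tau u_t}$ and $\hat z=\widehat{u_t+\tau u_{tt}}$, as the first-order system $\hat\phi_t=\hat z$, $\hat z_t=-|\xi|^2\hat\phi-(\beta-\tau)|\xi|^2\hat v$, $\tau\hat v_t=\hat z-\hat v$; these are precisely the components of $\hat{\mathbf V}$ (up to the factor $|\xi|$ coming from $\nabla$), so $|\hat{\mathbf V}|^2=|\hat z|^2+|\xi|^2|\hat\phi|^2+|\xi|^2|\hat v|^2$. Because $0<\tau<\beta$, the quadratic form $\mathrm E_1[\hat{\mathbf U}]=\tfrac12\big(|\hat z|^2+|\xi|^2|\hat\phi|^2+\tau(\beta-\tau)|\xi|^2|\hat v|^2\big)$ is equivalent to $|\hat{\mathbf V}|^2$, which already settles the last assertion of the proposition. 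A direct computation (test the three equations with $\overline{\hat z}$, $|\xi|^2\overline{\hat\phi}$ and $\tau(\beta-\tau)|\xi|^2\overline{\hat v}$, add, take real parts; all the cross products cancel) yields the basic identity $\tfrac{d}{dt}\mathrm E_1=-(\beta-\tau)|\xi|^2|\hat v|^2$, so only the $\hat v$-component is directly damped.

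To recover dissipation of $|\hat z|^2$ and of $|\xi|^2|\hat\phi|^2$, I would use the cross functionals $\mathcal F_1=\mathrm{Re}(\hat z\overline{\hat v})$ and $\mathcal F_2=\mathrm{Re}(\hat\phi\overline{\hat z})$, for which the system gives $\tfrac{d}{dt}\mathcal F_1=\tfrac1\tau|\hat z|^2+(\text{indefinite terms}\propto\mathrm{Re}(\hat z\overline{\hat v}),\ |\xi|^2\mathrm{Re}(\hat\phi\overline{\hat v}),\ |\xi|^2|\hat v|^2)$ and $\tfrac{d}{dt}\mathcal F_2=-|\xi|^2|\hat\phi|^2+|\hat z|^2-(\beta-\tau)|\xi|^2\mathrm{Re}(\hat\phi\overline{\hat v})$. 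Then I would set $\mathcal L[\hat{\mathbf U}]=N\,\mathrm E_1-\lambda_1\tfrac{|\xi|^2}{1+|\xi|^2}\mathcal F_1+\lambda_2\tfrac{|\xi|^2}{1+|\xi|^2}\mathcal F_2$, the weight $\tfrac{|\xi|^2}{1+|\xi|^2}$ being chosen so that the manufactured dissipation appears at exactly the advertised rate (it is $\approx|\xi|^2$ for $|\xi|\le1$ and $\approx1$ for $|\xi|\ge1$) and so that the corrections vanish at $\xi=0$. Since $\tfrac{|\xi|^2}{1+|\xi|^2}\le1$ and each cross term is $\le\tfrac12(|\cdot|^2+|\cdot|^2)$, one has $|\mathcal L-N\mathrm E_1|\lesssim(\lambda_1+\lambda_2)\mathrm E_1$, hence $c_1\mathrm E_1\le\mathcal L\le c_2\mathrm E_1$ for $\lambda_1,\lambda_2$ small. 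Differentiating, the definite dissipation created is $-N(\beta-\tau)|\xi|^2|\hat v|^2-\tfrac{\lambda_1}{\tau}\tfrac{|\xi|^2}{1+|\xi|^2}|\hat z|^2-\lambda_2\tfrac{|\xi|^4}{1+|\xi|^2}|\hat\phi|^2$, and all remaining indefinite terms must be absorbed into it via Young's inequality; carrying this out forces a hierarchy of the form $0<\lambda_2\ll\lambda_1^2$ with $N$ large, the overflow of the $|\hat v|^2$-remainders being parked in the $N\mathrm E_1$ dissipation. Once the absorption is complete, $\tfrac{d}{dt}\mathcal L\le-\eta_0\tfrac{|\xi|^2}{1+|\xi|^2}\big(|\hat z|^2+|\xi|^2|\hat\phi|^2+|\xi|^2|\hat v|^2\big)\le-\eta\tfrac{|\xi|^2}{1+|\xi|^2}\mathcal L$, which is the stated inequality.

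The interesting mechanism is the basic energy identity together with the two cross-term computations; the genuinely delicate part is the last step — the tension between making the corrections small enough not to spoil $\mathcal L\approx\mathrm E_1$ and large enough to generate a definite dissipation must be resolved simultaneously in the low- and high-frequency regimes, which is exactly what dictates the weight $\tfrac{|\xi|^2}{1+|\xi|^2}$ and the carefully ordered smallness of $\lambda_1,\lambda_2$ (and largeness of $N$). This is the argument carried out in \cite[Proposition 3.1]{PellSaid_2019_1}, from which the proposition is quoted.
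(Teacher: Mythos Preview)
Your proposal is correct and follows essentially the same approach as the paper: the proposition is not reproved here but quoted from \cite[Proposition~3.1]{PellSaid_2019_1}, and your sketch reproduces exactly that argument---the basic energy identity $\tfrac{d}{dt}\mathrm E_1=-(\beta-\tau)|\xi|^2|\hat v|^2$ together with the two weighted cross functionals $\mathrm{Re}(\hat z\overline{\hat v})$ and $\mathrm{Re}(\hat\phi\overline{\hat z})$ that recover the missing dissipation. One small remark: the hierarchy you actually need is $\lambda_2\ll\lambda_1$ (not $\lambda_2\ll\lambda_1^2$) together with $N$ large enough to absorb the $|\hat v|^2$ remainders, but this does not affect the validity of the sketch.
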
  

Applying $\dot{\Delta}_q,\,q\in \Z$ to \eqref{Main_problem_Linearized} and following the same steps as in the proof of the Proposition 3.1 in \cite{PellSaid_2019_1}, we obtain the frequency-localized estimate stated in the following proposition. 
 \begin{proposition}\label{Proposition_Homo_Local}
 Assume that $0<\tau < \beta$. 
There exists a functional $\mathcal{L}\big[\widehat{\dot{\Delta}_q\mathbf{U}}\big]$ and three positive constants $c_1, c_2$ and $\eta$ such that  for all $t\geq 0$, it holds that 
\begin{equation}
\begin{aligned}
c_1\mathrm{E}_1 \big[\widehat{\dot{\Delta}_q\mathbf{U}}\big](t) \leq \mathcal{L}\big[\widehat{\dot{\Delta}_q\mathbf{U}}\big](t)\leq c_2 \mathrm{E}_1 \big[\widehat{\dot{\Delta}_q\mathbf{U}}\big](t),
\end{aligned}
\end{equation}
and 
\begin{equation}\label{Lyap_Frequncy_Local}
\frac{\textup{d}}{\textup{d} t}\mathcal{L}\big[\widehat{\dot{\Delta}_q\mathbf{U}}\big](t)+\eta \frac{|\xi|^2}{1+|\xi|^2}\mathcal{L}\big[\widehat{\dot{\Delta}_q\mathbf{U}}\big](t)\leq 0.  
\end{equation}

 \end{proposition}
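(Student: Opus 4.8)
The plan is to obtain Proposition~\ref{Proposition_Homo_Local} as an essentially cost-free frequency localization of Proposition~\ref{Proposition_Linearized}. The key point is that each Littlewood--Paley block $\dot{\Delta}_q$ is a Fourier multiplier with symbol $\varphi(2^{-q}\xi)$, and that \eqref{Main_problem_Linearized} has constant coefficients, so $\dot{\Delta}_q$ commutes with the linear evolution. Consequently the functional, the constants, and the differential inequality from \cite[Proposition 3.1]{PellSaid_2019_1} can be transplanted verbatim to the localized pieces.

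Concretely, I would first apply $\dot{\Delta}_q$ to \eqref{MGT_1_Linearized}; since $\tau$ and $\beta$ (and $c$) are constants, $\dot{\Delta}_q u$ solves the same third-order equation with data $(\dot{\Delta}_q u_0,\dot{\Delta}_q u_1,\dot{\Delta}_q u_2)$. Taking the spatial Fourier transform and setting $v=u_t$, $w=u_{tt}$ as in \eqref{Change_Variables}, for each fixed $\xi$ the triple $\widehat{\dot{\Delta}_q\mathbf{U}}(\xi,t)=\big(\widehat{\dot{\Delta}_q u},\widehat{\dot{\Delta}_q v},\widehat{\dot{\Delta}_q w}\big)(\xi,t)$ satisfies exactly the same linear ODE system in $t$ that $\hat{\mathbf{U}}(\xi,t)$ satisfies in the proof of \cite[Proposition 3.1]{PellSaid_2019_1}; the block $\dot{\Delta}_q$ only multiplies the initial datum by the scalar $\varphi(2^{-q}\xi)$, which is inert under the time evolution. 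Then I would define $\mathcal{L}\big[\widehat{\dot{\Delta}_q\mathbf{U}}\big](\xi,t)$ and $\mathrm{E}_1\big[\widehat{\dot{\Delta}_q\mathbf{U}}\big](\xi,t)$ by the very same algebraic formulas in $\big(\widehat{\dot{\Delta}_q u},\widehat{\dot{\Delta}_q v},\widehat{\dot{\Delta}_q w},\xi\big)$ that give $\mathcal{L}[\hat{\mathbf{U}}]$ and $\mathrm{E}_1[\hat{\mathbf{U}}]$ in terms of $(\hat u,\hat v,\hat w,\xi)$. Because in \cite{PellSaid_2019_1} the equivalence $c_1\mathrm{E}_1\le\mathcal{L}\le c_2\mathrm{E}_1$ and the inequality
\[
\frac{\textup{d}}{\textup{d} t}\mathcal{L}[\hat{\mathbf{U}}](t)+\eta\,\frac{|\xi|^2}{1+|\xi|^2}\,\mathcal{L}[\hat{\mathbf{U}}](t)\le 0
\]
are derived \emph{pointwise in $\xi$} --- the argument only multiplies the scalar ODE by powers of $|\xi|$ and adds the resulting identities, never pairing distinct frequencies or integrating in $\xi$ --- the same two statements hold for $\widehat{\dot{\Delta}_q\mathbf{U}}$, for every $q\in\Z$ and every $t\ge 0$, with the \emph{same} constants $c_1,c_2,\eta$. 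This is precisely the content of \eqref{Lyap_Frequncy_Local}.

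The main (and essentially the only) point requiring care is a verification rather than a construction: one must confirm that no step in the proof of \cite[Proposition 3.1]{PellSaid_2019_1} uses an $L^2_\xi$ inner product, an integration in $\xi$, or Plancherel before the differential inequality is in hand, so that the Lyapunov estimate is genuinely frequency-by-frequency; only then does the localization cost nothing. Given the explicit quadratic form of $\mathrm{E}_1$ and the cross-term structure recalled in Proposition~\ref{Proposition_Linearized}, this is immediate, and the proposition follows.
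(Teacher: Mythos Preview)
Your proposal is correct and matches the paper's approach: the paper simply states that applying $\dot{\Delta}_q$ to \eqref{Main_problem_Linearized} and repeating the steps of \cite[Proposition~3.1]{PellSaid_2019_1} yields the result. Your write-up supplies the (accurate) justification that the Lyapunov construction there is pointwise in $\xi$ and that $\dot{\Delta}_q$ commutes with the constant-coefficient evolution, so the same constants transfer verbatim.
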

 %%%%%%%%%%%%%%%%%
 Similar to Proposition \ref{Proposition_Homo_Local}, applying the frequency localized operator $\Delta_q, \, q\geq -1$ to system \eqref{Main_problem_Linearized} and using the same method in \cite{PellSaid_2019_1}, we can prove the following proposition. 
 
 \begin{proposition}\label{Proposition_inHomo_Local}
 Assume that $0<\tau < \beta$. 
There exists a functional $\mathcal{L}\big[\widehat{\Delta_q\mathbf{U}}\big]$ and three positive constants $c_1, c_2$ and $\eta$ such that  for all $t\geq 0$, it holds that for all $q\geq -1$ (with $\Delta_{-1}=S_0$)
\begin{equation}
\begin{aligned}
c_1\mathrm{E}_1 \big[\widehat{\Delta_q\mathbf{U}}\big](t) \leq \mathcal{L}\big[\widehat{\Delta_q\mathbf{U}}\big](t)\leq c_2 \mathrm{E}_1 \big[\widehat{\Delta_q\mathbf{U}}\big](t),
\end{aligned}
\end{equation}
and 
\begin{equation}\label{Lyap_Frequncy_Local_Inhom}
\frac{\textup{d}}{\textup{d} t}\mathcal{L}\big[\widehat{\Delta_q\mathbf{U}}\big](t)+\eta \frac{|\xi|^2}{1+|\xi|^2}\mathcal{L}\big[\widehat{\Delta_q\mathbf{U}}\big](t)\leq 0.  
\end{equation}

 \end{proposition}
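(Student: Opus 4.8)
\noindent The plan is to reduce Proposition~\ref{Proposition_inHomo_Local} to the frequency-pointwise Lyapunov estimate already established in Proposition~\ref{Proposition_Linearized} (that is, \cite[Proposition 3.1]{PellSaid_2019_1}), in exactly the same way that Proposition~\ref{Proposition_Homo_Local} was obtained. The key structural remark is that \eqref{MGT_1_Linearized} has constant coefficients, so the inhomogeneous Littlewood--Paley projector $\Delta_q$ --- a Fourier multiplier with \emph{time-independent} symbol $\theta_q(\xi)$, supported in an annulus $\{|\xi|\sim 2^q\}$ for $q\ge 0$ and in a ball $\{|\xi|\lesssim 1\}$ for $q=-1$ (where $\Delta_{-1}=S_0$) --- commutes with the entire linear evolution. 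Writing $\mathbf{U}=(u,v,w)$ with $v=u_t,\ w=u_{tt}$ as in Section~\ref{section_prel}, the localized triple $\Delta_q\mathbf{U}$ therefore solves the same linear first-order system as $\mathbf{U}$, and on the Fourier side $\widehat{\Delta_q u}(\xi,t)=\theta_q(\xi)\hat u(\xi,t)$, so for each fixed $\xi$ the components of $\widehat{\Delta_q\mathbf{U}}(\xi,\cdot)$ satisfy, in $t$, precisely the ODE system underlying Proposition~\ref{Proposition_Linearized}.

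First I would define $\mathcal{L}\big[\widehat{\Delta_q\mathbf{U}}\big](t)$ by the same formula as $\mathcal{L}[\hat{\mathbf{U}}]$ in \cite[Proposition 3.1]{PellSaid_2019_1}, replacing $(\hat u,\hat v,\hat w)$ by $(\widehat{\Delta_q u},\widehat{\Delta_q v},\widehat{\Delta_q w})$ throughout; concretely $\mathcal{L}=\mathrm{E}_1+\varepsilon\mathcal{F}$ with $\mathrm{E}_1$ the quantity displayed in Proposition~\ref{Proposition_Linearized} and $\varepsilon\mathcal{F}$ the same small multiplier-weighted cross term that generates the dissipation $\tfrac{|\xi|^2}{1+|\xi|^2}$. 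The two-sided bound $c_1\mathrm{E}_1\le\mathcal{L}\le c_2\mathrm{E}_1$ follows from Young's inequality exactly as there, with constants independent of $q$ since everything is pointwise in $\xi$; and the differential inequality \eqref{Lyap_Frequncy_Local_Inhom} is nothing but the pointwise inequality of Proposition~\ref{Proposition_Linearized} evaluated along the curve $t\mapsto\widehat{\Delta_q\mathbf{U}}(\xi,t)$. No new energy identity or integration by parts is required: the computation of $\tfrac{\textup{d}}{\textup{d}t}\mathcal{L}$ uses only the ODE satisfied by $\widehat{\Delta_q\mathbf{U}}$, which is identical to the one for $\hat{\mathbf{U}}$.

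The only point worth an explicit remark is the block $q=-1$, where $\Delta_{-1}=S_0$ is supported near $\xi=0$: there $\tfrac{|\xi|^2}{1+|\xi|^2}$ is comparable to $|\xi|^2$ and is not bounded below, so \eqref{Lyap_Frequncy_Local_Inhom} yields only weak dissipation --- which is the expected behavior of low frequencies and does not affect the statement, since the inequality of Proposition~\ref{Proposition_Linearized} is valid for \emph{all} $\xi\in\R^n$ and hence on $\supp\theta_{-1}$ in particular. In short, I do not expect any genuine obstacle here; the entire content is the commutation of $\Delta_q$ with the linear flow together with the fact that the Lyapunov construction of \cite{PellSaid_2019_1} is frequency-pointwise and thus transfers verbatim to each dyadic block. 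The ``hard part'' --- writing down $\mathcal{L}$ and verifying the two displayed inequalities --- was already carried out there, and the present proof amounts to transcribing it with $\hat{\mathbf{U}}$ replaced by $\widehat{\Delta_q\mathbf{U}}$.
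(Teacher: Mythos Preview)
Your proposal is correct and matches the paper's approach exactly: the paper itself does not give a detailed proof but simply says to apply $\Delta_q$ to the linearized system and repeat the argument of \cite{PellSaid_2019_1}, which is precisely what you have spelled out. Your explicit remark that the Lyapunov construction is frequency-pointwise and that $\Delta_q$ commutes with the constant-coefficient linear flow is the entire content, and your discussion of the $q=-1$ block is a helpful elaboration the paper omits.
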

 
 \subsection{Proof of Theorem \ref{Theorem_Decay}}
 
 Since $\mathcal{L}\big[\widehat{\dot{\Delta}_q\mathbf{U}}\big]\approx \mathrm{E}_1 \big[\widehat{\dot{\Delta}_q\mathbf{U}}\big]\approx  |\widehat{\dot{\Delta}_q\mathbf{V}}|^2$, then, it holds from \eqref{Lyap_Frequncy_Local} that 
  \begin{equation}\label{Localized_Estimate}
|\widehat{\dot{\Delta}_q\mathbf{V}}|^2\lesssim e^{-\eta\frac{|\xi|^2}{1+|\xi|^2}t}|\widehat{\dot{\Delta}_q\mathbf{V}_0}|^2. 
\end{equation} 
Following \cite{Xu_Kawashima_2015} and  using the definition of the localization operator $\dot{\Delta}_q$, we have $|\xi|\sim 2^q$
and thus, for $q<0$, then $|\xi|<1$ and for $q\geq 0$, then $|\xi|\geq 1$.
Hence, we separate the proof into two frequency regions, low frequency $%
|\xi|<1$ and high frequency $|\xi|\geq 1$.

\subsubsection*{Case 1.} For $q\geq 0$, multiplying  \eqref{Localized_Estimate} by $|\xi|^k$  and by using Plancherel's theorem, we deduce that there exists a constant $c>0$ such that 
\begin{equation}\label{High_Frequency_V}
\Vert \dot{\Delta}_q \Lambda ^k\mathbf{V}\Vert_{L^2}\lesssim e^{-ct} \Vert \dot{\Delta}_q \Lambda ^k\mathbf{V}_0\Vert_{L^2}. 
\end{equation}
We multiply the above inequality by $2^{q(\sigma-k)}$, summing the resulting inequality over $q\geq 0$ and using Lemma \ref{Lemma_Bernstein},   we get
\begin{equation}\label{High_Frequency_Estimate_Inhom}
\begin{aligned}
\sum_{q\geq 0} 2^{q(\sigma-k)}\Vert \dot{\Delta}_q \Lambda ^k\mathbf{V}\Vert_{L^2}\lesssim &\,e^{-ct}\sum_{q\geq 0}2^{q(\sigma-k)}\Vert \dot{\Delta}_q \Lambda ^k\mathbf{V}_0\Vert_{L^2} \\
\lesssim &\,e^{-ct} \Vert \mathbf{V}_0\Vert_{\dot{B}_{2,1}^\sigma}. 
\end{aligned}
\end{equation}

\subsubsection*{Case 2.} For $q<0$, we have $\frac{|\xi|^2}{1+|\xi|^2}\approx  |\xi|^2$ and $|\xi|\sim 2^{q}$. Thus, we have, for some $\eta_1>0$, 
\begin{equation}\label{Localized_Estimate_2}
|\widehat{\dot{\Delta}_q\mathbf{V}}|\lesssim e^{-\eta_1|\xi|^2t}|\widehat{\dot{\Delta}_q\mathbf{V}_0}|\approx e^{-\eta_12^{2q}t}|\widehat{\dot{\Delta}_q\mathbf{V}_0}|.  
\end{equation}
Hence, integrating \eqref{Localized_Estimate_2} over $\R^3_\xi$ and using  Plancherel's theorem, we obtain 
\begin{equation}
\Vert \dot{\Delta}_q\mathbf{V}\Vert_{L^2}\lesssim e^{-\eta_1 (2^q \sqrt{t})^2} \Vert \dot{\Delta}_q\mathbf{V}_0\Vert_{L^2}. 
\end{equation}
Let $\sigma\in\mathbb{R}$ and $s\in \mathbb{R}$ such that $\sigma+s>0$, then
we multiply the above inequality by $2^{q\sigma}$, we get 
\begin{eqnarray}  \label{Estimate_1}
 2^{q\sigma}\Vert \dot{\Delta}_q\mathbf{V}\Vert_{L^2} &\lesssim &\Vert \mathbf{V}_0 \Vert_{\dot{%
B}_{2,\infty}^{-s}}(1+t)^{-\frac{\sigma+s}{2}} \Big[(2^q\sqrt{t})^{\sigma+s}
e^{-\eta_1(2^q\sqrt{t})^2}\Big]  
\end{eqnarray}
since $ \sup_{t>0}\sum_{q\in \Z}(2^q\sqrt{t})^{\sigma+s} e^{-c(2^q\sqrt{t})^2}\lesssim  1$ (see \cite[Lemma 2.35]{bahouri2011fourier}), 
then we obtain 
\begin{equation}\label{low_Frequncy_Estimate}
\sum_{q<0}2^{q\sigma}\Vert \dot{\Delta}_q\mathbf{V}\Vert_{L^2}\lesssim 
 \Vert \mathbf{V}_0 \Vert_{\dot{B}_{2,\infty}^{-s}}(1+t)^{-\frac{\sigma+s}{2%
}}.
\end{equation}
Consequently, collecting \eqref{High_Frequency_Estimate_Inhom} and \eqref{low_Frequncy_Estimate}, we obtain 
\begin{equation}
\begin{aligned}
\Vert \mathbf{V} \Vert_{\dot{B}_{2,1}^{\sigma}}=&\,\sum_{q<0}2^{q\sigma}\Vert \dot{\Delta}_q\mathbf{V}\Vert_{L^2} +\sum_{q\geq 0} 2^{q\sigma}\Vert \dot{\Delta}_q \mathbf{V}\Vert_{L^2}\\
\lesssim&\, \Vert \mathbf{V}_0 \Vert_{\dot{B}_{2,\infty}^{-s}}(1+t)^{-\frac{\sigma+s}{2%
}}+e^{-ct} \Vert \mathbf{V}_0\Vert_{\dot{B}_{2,1}^\sigma}\\
\lesssim&\, \Vert \mathbf{V}_0\Vert_{\dot{B}_{2,1}^\sigma\cap \dot{B}_{2,\infty}^{-s}}(1+t)^{-\frac{\sigma+s}{2%
}},
\end{aligned}
\end{equation}
which is \eqref{Decay_Homog}. 

Now, we need to show \eqref{Decay_InHomog}. 
Since for $q\geq 0$ we have $\dot{\Delta}_qf=\Delta_qf$, then the high-frequency estimate holds as in the homogeneous case,  that is 
\begin{equation}\label{High-Frequncy_Estimate}
\sum_{q\geq 0} 2^{q(\sigma-k)}\Vert \Delta_q \Lambda ^k\mathbf{V}\Vert_{L^2}\lesssim  e^{-ct} \Vert \mathbf{V}_0\Vert_{\dot{B}_{2,1}^\sigma}. 
\end{equation}
To show the low-frequency estimate, we take $q=-1$ 
in \eqref{Lyap_Frequncy_Local_Inhom}, we obtain 
\begin{equation}\label{Lyap_Frequncy_Local_Inhom_2}
\frac{\textup{d}}{\textup{d} t}\mathcal{L}\big[\widehat{\Delta_{-1}\mathbf{U}}\big](t)
+\eta_2 |\xi|^2 \vert \widehat{\Delta_{-1}\mathbf{V}}\vert_{L^2}^2\leq 0. 
\end{equation}
 Multiplying  \eqref{Lyap_Frequncy_Local_Inhom_2} by $|\xi|^{2k}$ and  using  Plancherel's theorem,  
 it follows that 
\begin{equation}\label{Ineq_L_Main}
\frac{\textup{d}}{\textup{d} t}\mathscr{L}\big[\Delta_{-1}\mathbf{U}\big](t) +\eta_2 \Vert \Lambda^{k+1}\Delta_{-1}\mathbf{V}\Vert_{L^2}^2\leq 0
\end{equation}
where 
\begin{equation}\label{Equiva_L_S_0}
\mathscr{L}\big[\Delta_{-1}\mathbf{U}\big](t)=\int_{\R^3_\xi}|\xi|^{2k}\mathcal{L}\big[\widehat{\Delta_{-1}\mathbf{U}}\big]d\xi\approx \Vert \Lambda^{k} \Delta_{-1}\mathbf{V} (t)\Vert_{L^2}^2 . 
\end{equation}
Following \cite{Guo_Wang_2012} and applying \eqref{Interpolation_I},   we obtain 
\begin{equation}\label{Interpolation_II}
\begin{aligned}
\Vert \Lambda^{k+1}\Delta_{-1}\mathbf{V}\Vert_{L^2}\gtrsim \Vert \Lambda^{k} \Delta_{-1}\mathbf{V} \Vert_{L^2}^{1+\frac{1}{k+s}}\Vert \mathbf{V}\Vert_{\dot{B}_{2,\infty}^{-s}}^{-\frac{1}{k+s}}.
\end{aligned}
\end{equation}
%\begin{equation}
%\begin{aligned}
%\Vert \Lambda^{k} \Delta_{-1}U \Vert_{L^2}\lesssim&\, \Vert \Lambda^{k+1}\Delta_{-1}U\Vert_{L^2}^{\frac{k+s}{k+1+s}}\Vert \Delta_{-1}U\Vert_{\dot{B}_{2,\infty}^{-s}}^{\frac{1}{k+1+s}} \\
%\lesssim&\,\Vert \Lambda^{k+1}\Delta_{-1}U\Vert_{L^2}^{\frac{k+1}{k+1+s}}\Vert U\Vert_{\dot{B}_{2,\infty}^{-s}}^{\frac{1}{k+1+s}} . 
%\end{aligned}
%\end{equation}
Our next goal is to show that 
\begin{equation}\label{Negative_Sobolev_Inequality}
\Vert \mathbf{V}\Vert_{\dot{B}_{2,\infty}^{-s}}\lesssim \Vert \mathbf{V}_0\Vert_{\dot{B}_{2,\infty}^{-s}}. 
\end{equation}
 Indeed, using the fact that 
$\mathcal{L}\big[\widehat{\dot{\Delta}_q\mathbf{U}}\big]\approx |\widehat{\dot{\Delta}_q\mathbf{V} }|^2$, we obtain from 
\eqref{Lyap_Frequncy_Local} together with Plancherel's theorem   
\begin{equation}\label{L_2_Diady}
\Vert \dot{\Delta}_q\mathbf{V} (t)\Vert_{L^2} \lesssim \Vert \dot{\Delta}_q\mathbf{V}_0\Vert_{L^2}.
\end{equation}
Multiplying \eqref{L_2_Diady} by $2^{-qs}$ and taking the $\ell^\infty$ norm yields \eqref{Negative_Sobolev_Inequality}.

\noindent Now, combining \eqref{Ineq_L_Main}, \eqref{Interpolation_II} and \eqref{Negative_Sobolev_Inequality}, we obtain 
\begin{equation}\label{Ineq_L_Main_2}
\begin{aligned}
\frac{\textup{d}}{\textup{d} t}\mathscr{L}\big[\Delta_{-1}\mathbf{U}\big](t) +C (\Vert \Lambda^{k} \Delta_{-1}\mathbf{V} \Vert_{L^2}^2)^{1+\frac{1}{k+s}}\Vert \mathbf{V}_0\Vert_{\dot{B}_{2,\infty}^{-s}}^{-\frac{2}{k+s}}\leq 0. 
\end{aligned}
\end{equation}
Recalling \eqref{Equiva_L_S_0}, we have from \eqref{Ineq_L_Main_2} for some $C_0>0$, 
\begin{equation}
\begin{aligned}
\frac{\textup{d}}{\textup{d} t}\mathscr{L}\big[\Delta_{-1}\mathbf{U}\big](t) +C_0 (\mathscr{L}\big[\Delta_{-1}\mathbf{U}\big](t))^{1+\frac{1}{k+s}}\Vert \mathbf{V}_0\Vert_{\dot{B}_{2,\infty}^{-s}}^{-\frac{2}{k+s}}\leq 0,
\end{aligned}
\end{equation}
which implies  
\begin{equation}
\mathscr{L}\big[\Delta_{-1}\mathbf{U}\big](t)\leq \Big(\left[\mathscr{L}\big[\Delta_{-1}\mathbf{U}\big](0)\right]^{-\frac{1}{k+s}}+C_0 \Vert \mathbf{V}_0\Vert_{\dot{B}_{2,\infty}^{-s}}^{-\frac{2}{k+s}} \frac{t}{k+s}\Big)^{-k+s}. 
\end{equation}
Using again \eqref{Equiva_L_S_0}, we obtain 
\begin{equation}\label{Estim_Nonhom_S_0}
\begin{aligned}
\Vert \Lambda^{k} \Delta_{-1}\mathbf{V} (t)\Vert_{L^2}^2\lesssim&\, \Big(\left[\Vert \Lambda^{k} \Delta_{-1}\mathbf{V} (0)\Vert_{L^2}\right]^{-\frac{2}{k+s}}+C_0 \Vert \mathbf{V}_0\Vert_{\dot{B}_{2,\infty}^{-s}}^{-\frac{2}{k+s}} \frac{t}{k+s}\Big)^{-k+s}\\
\end{aligned}
\end{equation}
 Then, we have by taking the square root of \eqref{Estim_Nonhom_S_0} (see \cite{Wang_Xu_Kawashima_2020}) 
% \begin{equation}\label{Low_Frequency_Decay}
%\Vert \Lambda^{k} \Delta_{-1}\mathbf{V} (t)\Vert_{L^2}\lesssim \Vert \mathbf{V}_0\Vert_{B_{2,1}^\sigma\cap \dot{B}_{2,\infty}^{-s}}(1+t)^{-\frac{k+s}{2%
%}}. 
%\end{equation}
%
%}
%which {\color{blue} together with \eqref{Equiva_L_S_0} }implies 

  \begin{equation}\label{Low_Frequency_Decay}
\Vert \Lambda^{k} \Delta_{-1}\mathbf{V} \Vert_{L^2}\lesssim \Vert \mathbf{V}_0\Vert_{\dot{B}_{2,\infty}^{-s}} (1+t)^{-\frac{k+s}{2}}. 
\end{equation}
Collecting  \eqref{High_Frequency_Estimate_Inhom} and \eqref{Low_Frequency_Decay}, we obtain \eqref{Decay_InHomog}. This finishes the proof of Theorem \ref{Theorem_Decay}. 
%%%%%%%%%%%%%%%%%%%%%%%%%%%%%%%

\subsection{Decay estimates for $u_t$}
Theorem \ref{Theorem_Decay} does not directly yield a decay rate for $\Vert u_t\Vert_{{X_{2,1}^\sigma}}=\Vert  v\Vert_{{X_{2,1}^\sigma}}$ (where  ${X_{2,1}^\sigma}$ is either $\dot{B}_{2,1}(\R^n)$ or $B_{2,1}(\R^n))$. Such a decay rate is necessary  to prove the decay estimates of the nonlinear equation. 
However, we can obtain it through the bound
   \begin{equation}\label{Ineq_L_2}
\Vert  v\Vert_{{X_{2,1}^\sigma}}\lesssim \Vert (v+\tau w)\Vert_{{X_{2,1}^\sigma}}+\Vert  w\Vert_{{X_{2,1}^\sigma}}
\end{equation} 
and \eqref{Decay_Estimates_V}, if we have a decay rate for  $\Vert w\Vert_{{X_{2,1}^\sigma}}$. The decay  rate of $\Vert  w\Vert_{{X_{2,1}^\sigma}}
$ is the result of the next proposition. 
\begin{proposition}\label{Decay_w_New}
Assume that $0<\tau < \beta$. If $\mathbf{V}_0\in \dot{B}_{2,1}^{\sigma+1}(\R^n)\cap \dot{B}_{2,\infty}^{-s}(\R^n)$ and $w_0\in \dot{B}_{2,1}^\sigma(\R^n)$ for $\sigma\in \R$, $s\in \R$ satisfying $\sigma+s+1>0$, then it holds that 
\begin{equation}\label{Decay_estimate_w_homog}
\begin{aligned}
\Vert w (t)\Vert_{\dot{B}_{2,1}^{\sigma}}\lesssim \Big(\Vert \mathbf{V}_0\Vert_{\dot{B}_{2,1}^{\sigma+1}\cap \dot{B}_{2,\infty}^{-s}}+\Vert w_0\Vert_{\dot{B}_{2,1}^\sigma}\Big)(1+t)^{-\frac{\sigma+s+1}{2%
}}
\end{aligned}
\end{equation}
provided that the thermal relaxation $\tau>0$ is sufficiently small.
\end{proposition}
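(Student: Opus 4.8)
The plan is to read off from the third equation of the first-order system \eqref{System_New} a scalar decay equation for $w$ alone. Writing $\Delta u+\beta\Delta v=\Delta(u+\tau v)+(\beta-\tau)\Delta v=\nabla\cdot[\nabla(u+\tau v)]+(\beta-\tau)\,\nabla\cdot[\nabla v]$, that equation becomes
$$\tau w_t+w=\Delta(u+\tau v)+(\beta-\tau)\Delta v,$$
whose right-hand side is a \emph{first}-order differential operator applied to two of the three components of $\mathbf V=(v+\tau w,\nabla(u+\tau v),\nabla v)$. Solving this linear ODE in $t$ with the integrating factor $e^{t/\tau}$ gives
$$w(t)=e^{-t/\tau}w_0+\frac1\tau\int_0^t e^{-(t-s)/\tau}\bigl[\Delta(u+\tau v)+(\beta-\tau)\Delta v\bigr](s)\,\textup{d}s.$$
Applying $\dot\Delta_q$ and Bernstein's inequality (Lemma \ref{Lemma_Bernstein}) we obtain, for every $q\in\Z$,
$$\Vert\dot\Delta_q w(t)\Vert_{L^2}\lesssim e^{-t/\tau}\Vert\dot\Delta_q w_0\Vert_{L^2}+\frac{2^q}{\tau}\int_0^t e^{-(t-s)/\tau}\Vert\dot\Delta_q\mathbf V(s)\Vert_{L^2}\,\textup{d}s.$$
The extra factor $2^q$ is exactly what upgrades the decay of $\mathbf V$ by one half-power, at the cost of requiring $\mathbf V_0$ one order smoother, namely in $\dot B_{2,1}^{\sigma+1}$ rather than $\dot B_{2,1}^{\sigma}$.

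I would then split into low and high frequencies as in the proof of Theorem \ref{Theorem_Decay}. For $q<0$, insert the pointwise bound $\Vert\dot\Delta_q\mathbf V(s)\Vert_{L^2}\lesssim e^{-\eta_1 2^{2q}s}\Vert\dot\Delta_q\mathbf V_0\Vert_{L^2}$ from \eqref{Localized_Estimate_2} and compute the time convolution exactly:
$$\frac1\tau\int_0^t e^{-(t-s)/\tau}e^{-\eta_1 2^{2q}s}\,\textup{d}s=\frac{e^{-\eta_1 2^{2q}t}-e^{-t/\tau}}{1-\tau\eta_1 2^{2q}}\lesssim e^{-\eta_1 2^{2q}t}.$$
The last step is where the smallness of $\tau$ is used: it makes $1-\tau\eta_1 2^{2q}$ bounded away from $0$ uniformly in $q<0$ (i.e.\ the fast kernel $\tau^{-1}e^{-\cdot/\tau}$ acts as an approximate identity against the slow low-frequency decay). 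Multiplying by $2^{q(\sigma+1)}$, using $\Vert\dot\Delta_q\mathbf V_0\Vert_{L^2}\le 2^{qs}\Vert\mathbf V_0\Vert_{\dot B_{2,\infty}^{-s}}$, and summing over $q<0$ yields, with $x=2^q\sqrt t$,
$$\sum_{q<0}2^{q(\sigma+1)}\,2^q e^{-\eta_1 2^{2q}t}\Vert\dot\Delta_q\mathbf V_0\Vert_{L^2}\lesssim \Vert\mathbf V_0\Vert_{\dot B_{2,\infty}^{-s}}\,t^{-\frac{\sigma+s+1}{2}}\sum_{q\in\Z}x^{\sigma+s+1}e^{-\eta_1 x^2},$$
and the remaining sum is bounded uniformly in $t$ precisely because $\sigma+s+1>0$ (the discrete-sum lemma \cite[Lemma 2.35]{bahouri2011fourier} already invoked above). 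A routine replacement of $t^{-(\sigma+s+1)/2}$ by $(1+t)^{-(\sigma+s+1)/2}$ takes care of small $t$.

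For $q\ge0$ I would use $\Vert\dot\Delta_q\mathbf V(s)\Vert_{L^2}\lesssim e^{-cs}\Vert\dot\Delta_q\mathbf V_0\Vert_{L^2}$; since both the ODE kernel and this bound decay exponentially, the convolution is $\lesssim e^{-c't}$ for some $c'>0$, and, together with the $e^{-t/\tau}w_0$ term, summation against the weight $2^{q\sigma}$ gives a high-frequency contribution $\lesssim e^{-c't}\bigl(\Vert w_0\Vert_{\dot B_{2,1}^{\sigma}}+\Vert\mathbf V_0\Vert_{\dot B_{2,1}^{\sigma+1}}\bigr)$, which is $\lesssim(1+t)^{-(\sigma+s+1)/2}$ times the same data. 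Adding the two regimes gives \eqref{Decay_estimate_w_homog}. The main obstacle is the low-frequency bookkeeping: obtaining the convolution-in-time estimate with a constant independent of $q$ — this is exactly what forces ``$\tau$ sufficiently small'' — and checking that the exponent coming out of the discrete summation is $\tfrac{\sigma+s+1}{2}$ and not $\tfrac{\sigma+s}{2}$, the extra half-power being supplied by the Bernstein factor $2^q$ (hence the need for $\mathbf V_0\in\dot B_{2,1}^{\sigma+1}$).
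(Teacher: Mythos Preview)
Your proposal is correct and follows essentially the same strategy as the paper: isolate the third equation as a linear ODE in $t$ for $w$ forced by first-order derivatives of the components of $\mathbf V$, solve via the integrating factor $e^{t/\tau}$, insert the already-proved decay of $\dot\Delta_q\mathbf V$, compute the time convolution (which is where $\tau$ small enters), and split into low/high frequencies exactly as in Theorem~\ref{Theorem_Decay}. The only technical difference is that the paper first derives an energy inequality for $|\hat w|^2$ in Fourier variables and then solves the ODE for this \emph{squared} quantity, whereas you solve the linear equation for $w$ itself and only take norms afterwards; your route is slightly more direct and avoids the Young-inequality step, but both lead to the same pointwise bound $\Vert\dot\Delta_q w(t)\Vert_{L^2}\lesssim e^{-t/\tau}\Vert\dot\Delta_q w_0\Vert_{L^2}+2^q e^{-\eta_1 2^{2q}t}\Vert\dot\Delta_q\mathbf V_0\Vert_{L^2}$ at low frequencies and the exponential analogue at high frequencies, after which the summation is identical.
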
   

 \begin{proof}
For proving the above estimate, we need to employ the decay rates of the Fourier transform of the solution.  It has been proved in \cite[Estimate 3.6]{PellSaid_2019_1} that the following estimate holds:
 \begin{equation}\label{Eexp}
     |\hat{\mathbf{V}}(\xi,t)|^2\lesssim |\hat{\mathbf{V}}(\xi,0)|^2\exp{(-\lambda \tfrac{|\xi|^2}{1+|\xi|^2}t)}
     \end{equation} 
for all $t \geq 0$. The constant $\lambda$ is positive and independent of $t$ and $\xi$. For the linearized problem, it  holds from the third equation of \eqref{System_New} 
\begin{equation} 
\begin{aligned}
\frac{1}{2}\frac{\textup{d}}{\dt}\int_{\mathbb{R}^{n}}\tau \left\vert  
w\right\vert ^{2}\dx+\int_{\mathbb{R}^{n}} |w|^2 \dx
=&\,
\int_{\mathbb{R}^{n}} \Delta uw\dx+\beta\int_{\mathbb{R}^{n}} \Delta vw\dx\\
\leq &\,\frac{1}{4}\Vert w\Vert_{L^2}^2+\Vert \Delta u\Vert_{L^2}^2+\frac{1}{4}\Vert w\Vert_{L^2}^2+\beta^2 \Vert \Delta v\Vert_{L^2}^2. 
\end{aligned}
\end{equation}
Hence, this yields 

	\begin{equation} 
\begin{aligned}
\frac{1}{2}\frac{\textup{d}}{\dt}\int_{\mathbb{R}^{n}}\tau \left\vert  
w\right\vert ^{2}\dx+\frac{1}{2}\int_{\mathbb{R}^{n}} |w|^2 \dx
\lesssim \,
\Vert
\Delta (u+\tau v)\Vert_{L^2}^2+\Vert \Delta v\Vert _{L^{2}}^2.
\end{aligned}
\end{equation}
 Thus we know that
  \begin{equation}\label{w_Energy_Fourier}
	\begin{aligned}
	\frac{1}{2}\frac{\textup{d}}{\dt}\tau \left\vert \hat{w}\right\vert
	^{2}+ \frac{1}{2}|\hat{w}|^2 \lesssim \, |\xi|^2 |\hat{\mathbf{V}}(\xi,t)|^2.
	\end{aligned}
	\end{equation}
By plugging in estimate \eqref{Eexp} for $|\hat{\mathbf{V}}(\xi,t)|^2$ in the above inequality, we obtain
\begin{equation}
\frac{\textup{d}}{\dt} \left\vert \hat{w}\right\vert
	^{2}\leq -\frac{1}{\tau} \left\vert \hat{w}\right\vert
	^{2}+C|\xi|^2 |\hat{\mathbf{V}}(\xi,0)|^2\exp{\left(-\lambda \tfrac{|\xi|^2}{1+|\xi|^2}t\right)}.  
\end{equation} 
Using the factorization $\frac{\textup{d}}{\dt} \vert \hat{w}\vert
	^{2}+\frac{1}{\tau} \vert \hat{w}\vert
	^{2}=e^{-\frac{t}{\tau}}\frac{\textup{d}}{\dt} (e^{\frac{t}{\tau}}\vert \hat{w}\vert
	^{2} )$
to arrive at 
 \begin{equation}\label{w_Estimate_main}
 \begin{aligned}
\left\vert \hat{w}\right\vert
	^{2} \leq&\, |\hat{w}_0|^2 \exp{\left(-\tfrac{1}{\tau} t\right)} +C|\xi|^2|\hat{\mathbf{V}}(\xi,0)|^2\int_0^t \exp{\left(-\lambda \tfrac{|\xi|^2}{1+|\xi|^2} s\right)}\, \exp{\left(-\tfrac{1}{\tau}(t-s)\right)}\ds,
\end{aligned}	
\end{equation}
which directly leads to	
 \begin{equation}\label{w_Estimate_main}
\begin{aligned}
\left\vert \hat{w}\right\vert
^{2} \leq& \begin{multlined}[t]
	 \, |\hat{w}_0|^2 \exp{\left(-\tfrac{1}{\tau} t\right)}\\ +C|\xi|^2|\hat{\mathbf{V}}(\xi,0)|^2\exp{\left(-\tfrac{1}{\tau} t\right)}\left(\tfrac{1}{\tau}-\lambda \tfrac{|\xi|^2}{1+|\xi|^2}\right)^{-1} \left[\exp{\left(-(\lambda \tfrac{|\xi|^2}{1+|\xi|^2}-\tfrac{1}{\tau})t\right)} -1\right]. \end{multlined}
\end{aligned}	
\end{equation}
To further bound the right-hand side side, we can use the identity 
\[\frac{1}{\frac{1}{\tau}-\lambda \frac{|\xi|^2}{1+|\xi|^2}}=
\frac{\tau  \left(|\xi|^2+1\right)}{|\xi|^2 (1-\lambda  \tau )+1}.
\] 
Assuming that the thermal relaxation is small enough so that $\tau< \frac{1}{\lambda}$, it holds
\begin{equation}
\frac{1}{\frac{1}{\tau}-\lambda \frac{|\xi|^2}{1+|\xi|^2}}\leq \frac{\tau}{1-\lambda  \tau}.
\end{equation}
Altogether, for small $\tau>0$, we obtain 
\begin{equation}\label{w_Fourier_Estimate}
\left\vert \hat{w}\right\vert
	^{2}\leq |\hat{w}_0|^2 \exp{\left(-\tfrac{1}{\tau} t\right)}+C|\xi|^2 |\hat{\mathbf{V}}(\xi,0)|^2\exp{\left(-\lambda \tfrac{|\xi|^2}{1+|\xi|^2}t\right)}.  
\end{equation}
Using the third equation of the linearized system of \eqref{System_New_Localized} (i.e., $\mathrm{R}_q=0$) and following the above steps, we can prove the following estimate (here $|\xi|\approx 2^{q}$)
\begin{equation}\label{w_Fourier_Estimate_Local}
\vert \widehat{\dot{\Delta}_qw}\vert
	^{2}\leq |\widehat{\dot{\Delta}_qw}_0|^2 \exp{\left(-\tfrac{1}{\tau} t\right)}+C2^{2q} |\widehat{\dot{\Delta}_q\mathbf{V}_0}|^2\exp{\left(-\lambda \tfrac{|\xi|^2}{1+|\xi|^2}t\right)}.  
\end{equation}
%%%%%%%%%%%%
As in the proof of Theorem \ref{Theorem_Decay}, for $q\geq 0$, we have from \eqref{w_Fourier_Estimate_Local} and Plancherel's theorem
 \begin{equation}
\Vert \dot{\Delta}_qw\Vert_{L^2}\lesssim e^{-ct}\Big(\Vert \dot{\Delta}_qw_0\Vert_{L^2}+ 2^q\Vert \dot{\Delta}_q\mathbf{V}_0\Vert_{L^2}\Big). 
\end{equation}
We multiply the above inequality by $2^{q\sigma}$, summing the resulting inequality over $q\geq 0$ and using Lemma \ref{Lemma_Bernstein},   we get
\begin{equation}\label{w_high_Frequency}
\begin{aligned}
\sum_{q\geq 0} 2^{q\sigma}\Vert \dot{\Delta}_qw\Vert_{L^2}\lesssim& \,e^{-ct}\sum_{q\geq 0}\Big(2^{q(\sigma+1)}\Vert \dot{\Delta}_q \mathbf{V}_0\Vert_{L^2}+2^{q\sigma}\Vert \dot{\Delta}_qw_0\Vert_{L^2}\Big)\\
\lesssim &\,e^{-ct} \big(\Vert \mathbf{V}_0\Vert_{\dot{B}_{2,1}^{\sigma+1}}+\Vert w_0\Vert_{\dot{B}_{2,1}^{\sigma}}\big). 
\end{aligned}
\end{equation}
Now, for $q<0$, we have as in  \eqref{Localized_Estimate_2}
\begin{equation}
\Vert \dot{\Delta}_qw\Vert_{L^2}\lesssim e^{-\frac{1}{2\tau} t}\Vert \dot{\Delta}_qw_0\Vert_{L^2}+  2^qe^{-\eta_1 (2^q \sqrt{t})^2} \Vert \dot{\Delta}_q\mathbf{V}_0\Vert_{L^2}. 
\end{equation}
As in the proof of Theorem \ref{Theorem_Decay}, for  $\sigma\in\mathbb{R}$ and $s\in \mathbb{R}$ such that $\sigma+s+1>0$, then
we multiply the above inequality by $2^{q\sigma}$, we get 
\begin{equation}\label{low_Frequncy_Estimate_w}
\sum_{q<0}2^{q\sigma}\Vert \dot{\Delta}_qw\Vert_{L^2}\lesssim e^{-\frac{1}{\tau} t}\Vert w_0\Vert_{\dot{B}_{2,1}^\sigma}+
 \Vert \mathbf{V}_0 \Vert_{\dot{B}_{2,\infty}^{-s}}(1+t)^{-\frac{\sigma+s +1}{2%
}}.
\end{equation}
Hence, \eqref{Decay_estimate_w_homog} holds by collecting \eqref{w_high_Frequency} and \eqref{low_Frequncy_Estimate_w}. 
\end{proof}
\section{Decay of the nonlinear model}\label{Section_6}
Our goal in this section is to extend the decay estimates  in Theorem \ref{Theorem_Decay} to the nonlinear model \eqref{Main_problem}. We follow  the method in  
\cite{Xu_Kawashima_2015}, which is based on a frequency-localization Duhamel principle.  

First,  taking the Fourier transform of  linearized system associated to   \eqref{System_New}, we obtain 
\begin{equation}\label{System_New_0}
\left\{
\begin{array}{ll}
\hat{u}_t=\hat{v},\vspace{0.2cm}\\
\hat{v}_t=\hat{w},\vspace{0.2cm}\\
\hat{w}_t=-\dfrac{|\xi|^2}{\tau }\hat{u}-\dfrac{\beta |\xi|^2}{\tau }\hat{v}-\dfrac{1}{\tau }\hat{w}.
\end{array}
\right.
\end{equation}
We can write the previous system in a matrix form as
\begin{subequations}\label{Linear_System_Fourier}
\begin{equation}\label{Matrix_Form}
\hat{\mathbf{U}}_t(\xi,t)=\Phi(\xi) \hat{\mathbf{U}}(\xi,t),
\end{equation}
with the initial data
\begin{eqnarray}\label{Initial_data}
\hat{\mathbf{U}}(\xi,0)=\hat{\mathbf{U}}_0(\xi),
\end{eqnarray}
\end{subequations}
where $\hat{\mathbf{U}}(\xi,t)=(\hat{u}(\xi,t),\hat{v}(\xi,t),\hat{w}(\xi,t))^T$ and
\begin{equation}\label{LA}
\Phi(\xi)= L+|\xi|^2 A =
\left(
\begin{array}{ccc}
0 & 1 & 0 \\
0 & 0 & 1 \\
0 & 0 & -\dfrac{1}{\tau }
\end{array}%
\right)
+ |\xi|^2
\left(
\begin{array}{ccc}
0 & 0 & 0 \\
0 & 0 & 0 \\
-\dfrac{1}{\tau } & -\dfrac{\beta }{\tau } & 0
\end{array}%
\right).
\end{equation}
We denote by $\mathcal{G}(t,x)$ the Green matrix associated with the linearized system  and defined through its Fourier transform as  
\begin{equation}
\widehat{\mathcal{G}f}(t,\xi)=e^{t\Phi(\xi)}\hat{f}(\xi).
\end{equation}
Hence, the solution $\mathbf{U}$ of the nonlinear problem \eqref{System_New} can be expressed, using Duhamel's formula, as 
\begin{equation}
\mathbf{U}=\mathbf{U}^{0}+\mathcal{N}(\mathbf{U}),
\end{equation}%
where $\mathbf{U}^{0}$ and $\mathcal{N}(\mathbf{U})$ are given by
\begin{equation}
\mathbf{U}^{0}=\mathcal{G}(t,x)\mathbf{U}_{0}\qquad \text{and}\qquad
\mathcal{N}(\mathbf{U})=\int_{0}^{t}\mathcal{G}(t-r,x)\mathcal{F}(\mathbf{%
U},\nabla \mathbf{U})(r) \textup{d}r .
\end{equation}%
$\mathbf{U}^{0}$ satisfies the linear equation
\begin{equation}
\mathbf{U}_{t}^{0}-\mathcal{A}\mathbf{U}^{0}=0,\qquad \mathbf{U}^{0}\left(
x\right) =\mathbf{U}_{0}\left( x\right) ,  \label{Linear_1}
\end{equation}%
and $\mathcal{N}(\mathbf{U})$ satisfies the nonlinear equation with zero
initial data, that\ is
\begin{equation}
\partial _{t}\mathcal{N}(\mathbf{U})-\mathcal{A} \mathcal{N}(\mathbf{U})=%
\mathcal{F}(\mathbf{U},\nabla \mathbf{U}),\qquad \mathcal{N}\left( \mathbf{U}%
\right) \left( x,0\right) =0,  \label{Nonlinear_1}
\end{equation}%
with \begin{equation}
\mathcal{A} \left(
\begin{array}{c}
u \\
v \\
w%
\end{array}%
\right)= \left(
\begin{array}{c}
v \\
w \\
\dfrac{1}{\tau}\Delta(u+\beta v)-\dfrac{1}{\tau}w%
\end{array}%
\right)
\end{equation}
and $\mathcal{F}$ is the nonlinear term
\begin{eqnarray*}
\mathcal{F}(\mathbf{U},\nabla \mathbf{U})= \left(
\begin{array}{c}
0 \\
0 \\
\dfrac{1}{\tau}\dfrac{B}{A}vw+\dfrac{2}{\tau}\nabla u\nabla v%
\end{array}%
\right).
\end{eqnarray*}

In the next lemma, we state  the frequency localization principle. 

\begin{lemma}\label{Lemma_Dumhamel}
Assume that $0<\tau<\beta$. Let $\mathbf{U}=(u,v,w)^T$ be the solution of \eqref{M_Main_System}. Then, it holds that
\begin{equation}
\Delta_q\Lambda^{\ell}\mathbf{U}(t,x)=\Delta_q \Lambda^\ell [\mathcal{G}(t,x)\mathbf{U}_{0}]+\int_0^t\Delta_q \Lambda^\ell[\mathcal{G}(t-r,x)\mathcal{F}(\mathbf{%
U},\nabla \mathbf{U})(r)] \textup{d}r
\end{equation}
 for $q\geq -1$ and $\ell\in \R$, and 
 \begin{equation}
\dot{\Delta}_q\Lambda^{\ell}\mathbf{U}(t,x)=\dot{\Delta}_q \Lambda^\ell [\mathcal{G}(t,x)\mathbf{U}_{0}]+\int_0^t\dot{\Delta}_q \Lambda^\ell[\mathcal{G}(t-r,x)\mathcal{F}(\mathbf{%
U},\nabla \mathbf{U})(r)] \textup{d}r,
\end{equation}
for $q\in \Z$ and $\ell\in \R$. 
\end{lemma}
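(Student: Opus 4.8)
The plan is to start from the mild (Duhamel) formulation of the nonlinear Cauchy problem — precisely the representation $\mathbf{U}=\mathbf{U}^0+\mathcal{N}(\mathbf{U})$ recorded just above the statement, i.e.
\begin{equation*}
\mathbf{U}(t)=\mathcal{G}(t)\mathbf{U}_0+\int_0^t\mathcal{G}(t-r)\mathcal{F}(\mathbf{U},\nabla\mathbf{U})(r)\,\textup{d}r,
\end{equation*}
which is legitimate for the solution furnished by the local theory (and, by Theorem~\ref{Main_Theorem_Nonl}, globally in time). All that remains is to apply $\dot{\Delta}_q\Lambda^\ell$ (respectively $\Delta_q\Lambda^\ell$) to both sides and to commute it through the time integral.

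First I would note that each operator in play is a Fourier multiplier: $\dot{\Delta}_q$ and $\Delta_q$ have symbols supported where $|\xi|\sim 2^q$ (resp.\ $|\xi|\lesssim 1$ for $q=-1$), $\Lambda^\ell$ has symbol $|\xi|^\ell$, and the linear propagator $\mathcal{G}(t-r)$ has symbol $e^{(t-r)\Phi(\xi)}$ with $\Phi$ as in \eqref{LA}. Since multipliers commute on the Fourier side,
\begin{equation*}
\widehat{\dot{\Delta}_q\Lambda^\ell\,\mathcal{G}(t-r)g}(\xi)=\theta_q(\xi)\,|\xi|^\ell\,e^{(t-r)\Phi(\xi)}\hat g(\xi)=\widehat{\mathcal{G}(t-r)\,\dot{\Delta}_q\Lambda^\ell g}(\xi),
\end{equation*}
where $\theta_q$ denotes the dyadic cutoff, and identically for $\Delta_q$; inverting the Fourier transform gives the pointwise-in-$(t,r)$ operator identity $\dot{\Delta}_q\Lambda^\ell\,\mathcal{G}(t-r)=\mathcal{G}(t-r)\,\dot{\Delta}_q\Lambda^\ell$.

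Next I would justify interchanging $\dot{\Delta}_q\Lambda^\ell$ with $\int_0^t(\cdot)\,\textup{d}r$. It suffices to verify that $r\mapsto\mathcal{G}(t-r)\mathcal{F}(\mathbf{U},\nabla\mathbf{U})(r)$ is Bochner integrable on $[0,t]$ with values in an $L^2$-based space on which $\dot{\Delta}_q\Lambda^\ell$ acts boundedly: this holds because $\|e^{(t-r)\Phi(\xi)}\|$ is bounded uniformly in $\xi$ and $r$ (a consequence of the Lyapunov structure in Proposition~\ref{Proposition_Linearized}), while $\mathcal{F}$ is quadratic in $(\mathbf{U},\nabla\mathbf{U})$ and hence controlled by $M(t)$ together with the energy norm from Theorem~\ref{Main_Theorem_Nonl}. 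A bounded linear operator passes through a Bochner integral, so, combining this with the commutation identity of the previous step, we get
\begin{equation*}
\dot{\Delta}_q\Lambda^\ell\mathbf{U}(t)=\dot{\Delta}_q\Lambda^\ell[\mathcal{G}(t)\mathbf{U}_0]+\int_0^t\dot{\Delta}_q\Lambda^\ell[\mathcal{G}(t-r)\mathcal{F}(\mathbf{U},\nabla\mathbf{U})(r)]\,\textup{d}r,
\end{equation*}
and the same computation with $\Delta_q$, $q\geq-1$, yields the inhomogeneous statement.

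I do not expect a genuine obstacle here: this is a bookkeeping lemma whose whole content is that the Littlewood--Paley projections and $\Lambda^\ell$ are Fourier multipliers commuting with the linear flow, together with the standard fact that such operators pass through time integrals. The only point deserving a word of care is the action of $\Lambda^\ell$ on the low-frequency block $\Delta_{-1}$ when $\ell<0$, since $|\xi|^\ell$ is then singular at the origin; in the applications of this lemma only $\ell\geq0$ occurs, so one may either restrict to $\ell\geq0$ or observe that, applied to data localized away from a neighbourhood of $\xi=0$ at the relevant stages, the product of the cutoff with $|\xi|^\ell$ is still a bona fide tempered-distribution multiplier.
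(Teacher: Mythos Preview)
Your proposal is correct and is precisely the standard argument: the paper itself omits the proof entirely, simply noting that it ``follows as in \cite[Lemma~5.1]{Xu_Kawashima_2015},'' and what you have written is exactly that routine verification (Fourier multipliers commute with the linear flow and pass through the Bochner integral). Your remark on the singularity of $|\xi|^\ell$ at the origin for $\ell<0$ is a fair caveat but, as you note, not an issue in the applications.
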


The proof of Lemma \ref{Lemma_Dumhamel} follows as in \cite[Lemma 5.1]{Xu_Kawashima_2015}. We omit the details. 

\noindent We define
\begin{equation}
\vecc{E}(t)=\sup_{0\leq \sigma\leq t}\Vert \mathbf{ U}(\sigma)\Vert_{\mathbf{\mathcal{B}}_{2,1}^{3/2}}
\end{equation}
 and inspired by \eqref{Decay_InHomog} and \eqref{Decay_w_New}, we define 
\begin{equation}
\begin{aligned}
\mathcal{M}(t):=&\,\sup_{0\leq \ell<3/2}\sup_{0\leq r\leq t}(1+r)^{\frac{\ell}{2%
}+\frac{3}{4}}\Vert\Lambda^\ell \mathbf{V} (r)\Vert_{B_{2,1}^{\frac{3}{2}-\ell}}+\sup_{0\leq r\leq t}(1+r)^{\frac{3}{2}}\Vert\Lambda^{\frac{3}{2}}\mathbf{V}(r)\Vert_{\dot{B}^{0}_{2,1}}\\
&+ \sup_{0\leq r\leq t}(1+r)^{\frac{3}{2}}\Vert w(r)\Vert_{\dot{B}^{3/2}_{2,1}}+\sup_{0\leq r\leq t} (1+r)^{\frac{3}{4}}\Vert w(r)\Vert_{\dot{B}^{0}_{2,1}} +\sup_{0\leq r\leq t}(1+r)^{\frac{3}{2}}\Vert\mathbf{V}(r)\Vert_{\dot{B}^{\frac{5}{2}}_{2,1}}.
\end{aligned}
\end{equation}
We point out here that $\Vert\mathbf{V}(r)\Vert_{\dot{B}^{\frac{5}{2}}_{2,1}}$ should be expected to decay at the rate $(1+r)^{-2}$, but it seems not clear how to reach this for the nonlinear problem. However,  the decay of rate  $(1+r)^{-3/2}$ is enough to prove our result.

Now, we define the norm 
\begin{equation}
\begin{aligned}
\left\Vert \mathbf{U}\right\Vert _{\mathcal{H}}^{2}  = &\, \Vert
\nabla v\Vert _{L^{2}(\mathbb{R}^{3})}^{2}+\Vert \nabla (u+\tau v)\Vert
_{L^{2}(\mathbb{R}^{3})}^{2} +\Vert v+\tau w\Vert _{L^{2}(\mathbb{R}%
^{3})}^{2}. \\
=&\, \Vert\mathbf{V}\Vert_{L^2}^2.
\end{aligned}
\label{norm_H}
\end{equation}
The  goal is next to prove that $\mathcal{M}(t)$ is uniformly bounded for all time if $ \mathbbmss{E}_0$ (see \eqref{E_0_ss}) is sufficiently small. The main step towards this goal is to prove the following  proposition. 
\begin{proposition}\label{Proposition_Decay_Nonl}
Let $\mathbf{U}(t,x)$ be the global solution given in Theorem \ref{Main_Theorem_Nonl}. Assume that $\mathbf{U}_0\in \mathbf{\mathcal{B}}_{2,1}^{3/2} $ and $\mathbf{V}_0\in  \dot{B}_{2,\infty}^{-3/2}(\R^3) $. Then, it holds that 
\begin{equation}\label{Estimate_M_Nonl}
\mathcal{M}(t)\lesssim  \mathbbmss{E}_0+\vecc{E}(t)\mathcal{M}(t)+\mathcal{M}^2(t),
\end{equation}
where we recall that $\mathbbmss{E}_0=\Vert \mathbf{ U}_0\Vert_{\mathbf{\mathcal{B}}_{2,1}^{3/2}(\R^3)}+\Vert \mathbf{V}_0\Vert_ {\dot{B}_{2,\infty}^{-3/2}(\R^3)}$.  
\end{proposition}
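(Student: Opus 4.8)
The plan is to feed the frequency-localised Duhamel formula of Lemma~\ref{Lemma_Dumhamel} into the linear decay estimates of Theorem~\ref{Theorem_Decay} and Proposition~\ref{Decay_w_New}, and to close the resulting integral inequalities group by group over the five pieces defining $\mathcal{M}(t)$. Write $\mathbf{V}=\mathrm{B}\mathbf{U}$, where $\mathrm{B}$ is the fixed first-order operator $(u,v,w)\mapsto(v+\tau w,\nabla(u+\tau v),\nabla v)$, and recall that $w$ is the third component of $\mathbf{U}$; applying $\mathrm{B}$ to the Duhamel identity for $\mathbf{U}$ gives $\mathbf{V}(t)=\mathrm{B}\mathcal{G}(t)\mathbf{U}_0+\int_0^t\mathrm{B}\mathcal{G}(t-r)\mathcal{F}(r)\,\textup{d}r$ and an analogous identity for $w$. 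For the linear part $\mathrm{B}\mathcal{G}(t)\mathbf{U}_0$ one uses Theorem~\ref{Theorem_Decay}: \eqref{Decay_InHomog} with $(\sigma,s,k)=(\tfrac32,\tfrac32,\ell)$ controls $\|\Lambda^\ell\mathbf{V}(t)\|_{B^{3/2-\ell}_{2,1}}$ with the weight $(1+t)^{\ell/2+3/4}$; \eqref{Decay_Homog} with $(\sigma,s)=(\tfrac32,\tfrac32)$ and $(\tfrac52,\tfrac12)$ controls $\|\Lambda^{3/2}\mathbf{V}(t)\|_{\dot{B}^0_{2,1}}$ and $\|\mathbf{V}(t)\|_{\dot{B}^{5/2}_{2,1}}$, using that $\mathbf{U}_0\in\mathbf{\mathcal{B}}^{3/2}_{2,1}$ forces $\mathbf{V}_0\in\dot{B}^{3/2}_{2,1}\cap\dot{B}^{5/2}_{2,1}$ and that $\dot{B}^{5/2}_{2,1}\cap\dot{B}^{-3/2}_{2,\infty}\hookrightarrow\dot{B}^{-1/2}_{2,\infty}$ by real interpolation; and Proposition~\ref{Decay_w_New} with $(\sigma,s)=(\tfrac32,\tfrac12)$ and $(0,\tfrac32)$ controls the two $w$-norms. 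Altogether the linear contributions to $\mathcal{M}(t)$ are $\lesssim\mathbbmss{E}_0$.

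For the nonlinear term, with $\mathcal{F}=\big(0,0,\tfrac1\tau\tfrac BA vw+\tfrac2\tau\nabla u\nabla v\big)$, the $\mathbf{V}$-datum attached to a source in the $w$-slot is $\mathrm{B}(0,0,f)=(\tau f,0,0)$, so the pointwise Fourier bounds underlying Propositions~\ref{Proposition_Homo_Local}--\ref{Proposition_inHomo_Local} give, for each $r$, schematically $|\widehat{\Lambda^\ell\mathrm{B}\mathcal{G}(t-r)\mathcal{F}(r)}(\xi)|\lesssim|\xi|^\ell|\hat f(\xi,r)|\,e^{-c\frac{|\xi|^2}{1+|\xi|^2}(t-r)}$. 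Splitting in frequency exactly as in the proof of Theorem~\ref{Theorem_Decay} and splitting each time integral at $r=t/2$, the low-frequency block of $\mathcal{G}(t-r)$ produces an algebraic kernel $(1+t-r)^{-\text{(rate)}}$ acting on a negative-index Besov norm of $\mathcal{F}(r)$, while the high-frequency block produces $e^{-c(t-r)}$ acting on a positive-index Besov norm of $\mathcal{F}(r)$. Using $\int_0^t(1+t-r)^{-a}(1+r)^{-b}\,\textup{d}r\lesssim(1+t)^{-\min(a,b)}$ when $\max(a,b)>1$, it remains to bound, for each $r$, the quantities $\|\mathcal{F}(r)\|_{\dot{B}^{-3/2}_{2,\infty}}$, $\|\mathcal{F}(r)\|_{\dot{B}^{-1/2}_{2,\infty}}$, $\|\mathcal{F}(r)\|_{\dot{B}^{3/2}_{2,1}}$ and $\|\mathcal{F}(r)\|_{\dot{B}^{5/2}_{2,1}}$ with the appropriate decay in $r$.

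These come from bilinear estimates: $\|fg\|_{\dot{B}^{-3/2}_{2,\infty}}\lesssim\|fg\|_{L^1}\lesssim\|f\|_{L^2}\|g\|_{L^2}$ (via $L^1\hookrightarrow\dot{B}^{-3/2}_{2,\infty}$, Lemma~\ref{Embedding_Lemma}), $\|fg\|_{\dot{B}^{3/2}_{2,1}}\lesssim\|f\|_{\dot{B}^{3/2}_{2,1}}\|g\|_{\dot{B}^{3/2}_{2,1}}$ (algebra property of $\dot{B}^{3/2}_{2,1}(\R^3)$), $\|fg\|_{\dot{B}^{5/2}_{2,1}}\lesssim\|f\|_{L^\infty}\|g\|_{\dot{B}^{5/2}_{2,1}}+\|g\|_{L^\infty}\|f\|_{\dot{B}^{5/2}_{2,1}}$, and $\|\cdot\|_{\dot{B}^{-1/2}_{2,\infty}}$ obtained by interpolating the $\dot{B}^{-3/2}_{2,\infty}$ and $\dot{B}^{3/2}_{2,1}$ bounds. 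The norms of $v,w,\nabla u,\nabla v$ that occur are then converted into $\mathcal{M}(t)$ after the substitutions $v=(v+\tau w)-\tau w$ and $\nabla u=\nabla(u+\tau v)-\tau\nabla v$, which trade the poorly controlled unknowns $v,\nabla u$ for components of $\mathbf{V}$ together with $w$; for instance $\|w(r)\|_{\dot{B}^{3/2}_{2,1}}\lesssim(1+r)^{-3/2}\mathcal{M}(t)$, $\|\nabla v(r)\|_{L^2}\lesssim(1+r)^{-3/4}\mathcal{M}(t)$, $\|v(r)\|_{\dot{B}^{1/2}_{2,1}}\lesssim(1+r)^{-3/4}\mathcal{M}(t)$, and $\|\mathbf{V}(r)\|_{\dot{B}^{5/2}_{2,1}}\lesssim\|\mathbf{U}(r)\|_{\dot{\mathbf{\mathcal{B}}}^{3/2}_{2,1}}\lesssim\vecc{E}(t)$, while $\|v\|_{L^\infty}+\|w\|_{L^\infty}+\|\nabla u\|_{L^\infty}\lesssim(1+r)^{-3/2}\mathcal{M}(t)$ via $\dot{B}^{3/2}_{2,1}(\R^3)\hookrightarrow L^\infty$. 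Every product thus contributes either two decaying factors, giving $\mathcal{M}(t)^2$, or one factor bounded by the energy and one decaying factor, giving $\vecc{E}(t)\mathcal{M}(t)$; summing over the five groups and over frequencies yields \eqref{Estimate_M_Nonl}. Once it is established, combining \eqref{Estimate_M_Nonl} with $\vecc{E}(t)\lesssim\mathbbmss{E}_0$ from Theorem~\ref{Main_Theorem_Nonl} and a continuity/bootstrap argument for small $\mathbbmss{E}_0$ gives $\mathcal{M}(t)\lesssim\mathbbmss{E}_0$, hence Theorem~\ref{Decay_Estimate_Theorem}.

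The step I expect to be the real obstacle is the top-order group $\sup_r(1+r)^{3/2}\|\mathbf{V}(r)\|_{\dot{B}^{5/2}_{2,1}}$. Its high-frequency Duhamel branch needs $\|vw\|_{\dot{B}^{5/2}_{2,1}}$, and the factor $\|v\|_{\dot{B}^{5/2}_{2,1}}$, reached through $v=(v+\tau w)-\tau w$, brings in $\|w\|_{\dot{B}^{5/2}_{2,1}}$, which is \emph{not} controlled by $\vecc{E}(t)$ but only in $L^2$ in time, through the dissipation bound $\|w\|_{\widetilde{L}^2_t(\dot{B}^{5/2}_{2,1})}\lesssim\mathbbmss{E}_0$ supplied by \eqref{Main_Estimate_Theorem}. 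One has to insert this through a Cauchy--Schwarz in the time variable against the exponential kernel $e^{-c(t-r)}$, and then check that the outcome still carries the weight $(1+t)^{-3/2}$ and is compatible with the low-frequency $\dot{B}^{-1/2}_{2,\infty}$ branch; this is the delicate point, and it is also the reason one settles there for the rate $(1+t)^{-3/2}$ rather than the expected $(1+t)^{-2}$.
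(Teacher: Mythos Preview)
Your overall architecture---frequency-localised Duhamel fed into the linear estimates of Theorem~\ref{Theorem_Decay}, then bilinear product bounds to close on $\mathcal{M}(t)$---is exactly what the paper does. The organisation into low/high frequency, the use of $L^1\hookrightarrow\dot{B}^{-3/2}_{2,\infty}$ for the low block, the algebra property of $\dot{B}^{3/2}_{2,1}$ and the Moser estimate \eqref{Prod_Estima_Hom} for the high block, and Lemma~\ref{Lemma_Segel}/\eqref{Exponential_Estimate} for the convolution integrals are all the same tools.

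Your ``real obstacle'' at the $\dot{B}^{5/2}_{2,1}$ stage, however, is a false alarm: you have overlooked that $\nabla v$ is itself a component of $\mathbf{V}$. By \eqref{Eqv_Norms} one has $\|v\|_{\dot{B}^{5/2}_{2,1}}\approx\|\nabla v\|_{\dot{B}^{3/2}_{2,1}}\lesssim\|\mathbf{V}\|_{\dot{B}^{3/2}_{2,1}}$, which is already in $\mathcal{M}(t)$ with weight $(1+r)^{-3/2}$. For $\|w\|_{\dot{B}^{5/2}_{2,1}}$ the paper uses the \emph{other} substitution, $w=\tau^{-1}\big((v+\tau w)-v\big)$, to get $\|w\|_{\dot{B}^{5/2}_{2,1}}\lesssim\|v+\tau w\|_{\dot{B}^{5/2}_{2,1}}+\|v\|_{\dot{B}^{5/2}_{2,1}}\lesssim\|\mathbf{V}\|_{\dot{B}^{5/2}_{2,1}}+\|\mathbf{V}\|_{\dot{B}^{3/2}_{2,1}}$, both pointwise-in-time pieces of $\mathcal{M}(t)$. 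So the high-frequency $\dot{B}^{5/2}$ branch closes with a clean $\vecc{E}(t)\mathcal{M}(t)$ and no Cauchy--Schwarz against $e^{-c(t-r)}$ or appeal to the $L^2_t$ dissipation is required. (Your proposed workaround would likely close too, but it is unnecessary and produces a term of the form $\mathbbmss{E}_0\,\mathcal{M}(t)$ rather than $\vecc{E}(t)\mathcal{M}(t)$.)

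One further difference: for the two $w$-pieces of $\mathcal{M}(t)$ the paper does \emph{not} run Duhamel through the full Green's function and Proposition~\ref{Decay_w_New}. Instead it treats the third equation of \eqref{System_New_Localized} as a damped ODE for $\dot{\Delta}_q w$, multiplies by $\dot{\Delta}_q w$, and integrates against $e^{t/\tau}$ to obtain
\[
\|\dot{\Delta}_q w(t)\|_{L^2}\lesssim e^{-t/\tau}\|\dot{\Delta}_q w_0\|_{L^2}+\int_0^t e^{-(t-r)/\tau}\big(\|\nabla\dot{\Delta}_q\mathbf{V}(r)\|_{L^2}+\|\dot{\Delta}_q(vw+\nabla u\!\cdot\!\nabla v)(r)\|_{L^2}\big)\,\textup{d}r,
\]
then sums. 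This is more direct than extracting the $w$-component from $\mathcal{G}$ and lets the already-established decay of $\|\mathbf{V}\|_{\dot{B}^{5/2}_{2,1}}$ and $\|\mathbf{V}\|_{B^{3/2}_{2,1}}$ feed the $w$-bounds.
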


\subsection{Proof of Proposition \ref{Proposition_Decay_Nonl}}
In this section, we prove \eqref{Estimate_M_Nonl}. This will be done through four  steps. 
\subsubsection*{Step 1. High-frequency estimate.} Keeping in mind that $\Delta_q f=\dot{\Delta}_q f,$ for $q\geq 0$, we show the estimates just for the inhomogeneous case. Recalling \eqref{High_Frequency_V} and keeping in mind \eqref{norm_H},  we have  

  \begin{equation}
\begin{aligned}
\Vert\Delta_q \Lambda^\ell [\mathcal{G}(t,x)\mathbf{U}_{0}]\Vert _{\mathcal{H}} \lesssim e^{-ct} \Vert\Delta_q \Lambda^\ell \mathbf{U}_{0}]\Vert _{\mathcal{H}}
\end{aligned}
\end{equation}
for all $q\geq 0$. Now, using Lemma \ref{Lemma_Dumhamel}, we have  
\begin{equation}\label{Duhamel_Prin_Main}
\begin{aligned}
\Vert\Delta_q\Lambda^{\ell}\mathbf{U}\Vert_{\mathcal{H}}\leq &\, \Vert\Delta_q \Lambda^\ell [\mathcal{G}(t,x)\mathbf{U}_{0}]\Vert _{\mathcal{H}}+\int_0^t\Vert \Delta_q \Lambda^\ell[\mathcal{G}(t-r,x)\mathcal{F}(\mathbf{%
U},\nabla \mathbf{U})(r)]\Vert_{\mathcal{H}} \textup{d}r\\
\lesssim  &\,  e^{-ct} \Vert\Delta_q \Lambda^\ell \mathbf{U}_{0}]\Vert _{\mathcal{H}}+\int_0^t e^{-c(t-r)} \Vert\Delta_q \Lambda^\ell \mathcal{F}(\mathbf{%
U},\nabla \mathbf{U})(r)]\Vert _{\mathcal{H}} \textup{d}r
\end{aligned}
\end{equation}
Keeping in mind \eqref{norm_H}, we write  
\begin{equation}\label{F_Norm}
\begin{aligned}
\Vert\Delta_q \Lambda^\ell \mathcal{F}(\mathbf{%
U},\nabla \mathbf{U})(r)]\Vert _{\mathcal{H}}
=&\, \Big\Vert\Delta_q \Lambda^\ell\Big(0,0,\dfrac{1}{\tau}\dfrac{B}{A}vw+\dfrac{2}{\tau}\nabla u\nabla v\Big)\Big\Vert_{\mathcal{H}}\\
\lesssim &\,\Vert\Delta_q \Lambda^\ell (vw+\nabla u\nabla v)\Vert_{L^2}. 
\end{aligned}
\end{equation}
This leads to by recalling \eqref{norm_H} and \eqref{Equiv_Besov_ell}, together with \eqref{F_Norm}  
\begin{equation}\label{Duhamel_H_F_Estimate}
\sum_{q\geq 0} 2^{q(3/2-\ell)}\Vert\Delta_q \Lambda^\ell\mathbf{V}\Vert_{L^2}\lesssim e^{-ct} \Vert \mathbf{V}_0\Vert_{B_{2,1}^{3/2}}+\int_0^t e^{-c(t-r)}\Vert (vw+\nabla u\nabla v)(r)\Vert_{\dot{B}_{2,1}^{3/2}}\textup{d}r
\end{equation}
for $0\leq \ell\leq 3/2$. Our goal now is to estimate the term $\Vert(vw+\nabla u\nabla v)(r)\Vert_{\dot{B}_{2,1}^{3/2}}$.  As $\dot{B}_{2,1}^{3/2}(\R^3)$ is an algebra, we have 
\begin{equation}\label{Nonl_Term_Decay}
\begin{aligned}
\Vert(vw+\nabla u\nabla v)\Vert_{\dot{B}_{2,1}^{3/2}}\lesssim&\, \Vert v\Vert_{\dot{B}_{2,1}^{3/2}}\Vert w\Vert_{\dot{B}_{2,1}^{3/2}}+\Vert \nabla u\Vert_{\dot{B}_{2,1}^{3/2}}\Vert \nabla v\Vert_{\dot{B}_{2,1}^{3/2}}\\
\lesssim&\, \Vert v\Vert_{\dot{B}_{2,1}^{3/2}}\Vert  w\Vert_{\dot{B}_{2,1}^{3/2}}+\Vert \nabla u\Vert_{\dot{B}_{2,1}^{3/2}}\Vert \Lambda^{\frac{3}{2}}\nabla v\Vert_{\dot{B}_{2,1}^{0 }}\\
\lesssim &\,(1+t)^{-\frac{3}{2}} \vecc{E}(t)\mathcal{M}(t)\\
\lesssim &\,(1+t)^{-\frac{3}{4}-\ell/2} \vecc{E}(t)\mathcal{M}(t).
\end{aligned}
\end{equation}
Hence, plugging \eqref{Nonl_Term_Decay} into \eqref{Duhamel_H_F_Estimate} and using the elementary inequality \begin{equation}\label{Exponential_Estimate}
\int_0^t(1+\sigma)^{-\beta} e^{-\gamma(t-\sigma)}\, \textup{d} \sigma\lesssim (1+t)^{-\beta},\qquad \gamma, \, \beta>0,    
\end{equation}
we obtain 
\begin{equation}\label{Duhamel_H_F_Estimate_2}
\sum_{q\geq 0} 2^{q(3/2-\ell)}\Vert\Delta_q \Lambda^\ell\mathbf{V}\Vert_{L^2}\lesssim e^{-ct} \Vert \mathbf{V}_0\Vert_{B_{2,1}^{3/2}}+(1+t)^{-\frac{\ell}{2}-\frac{3}{4}} \vecc{E}(t)\mathcal{M}(t)
\end{equation}
for $0\leq \ell\leq 3/2$.  We point out that we have $\Vert \mathbf{V}_0\Vert_{B_{2,1}^{3/2}} \lesssim  \Vert \mathbf{ U}_0\Vert_{\mathbf{\mathcal{B}}_{2,1}^{3/2}}$
\subsubsection*{Step 2. Low-frequency estimate.}
Now, our goal is to prove  the low-frequency estimate. We discuss separately  the two cases $0\leq \ell<3/2$ and $\ell=3/2$. 
\begin{description}
\item[(i) The case $0\leq \ell<3/2$] 
\end{description}
In this case and 
following \cite{Mori_Xu_Kawashima_2}, we have for the low-frequency estimate as in \eqref{Low_Frequency_Decay} and for $0\leq \ell<3/2$, 
\begin{equation}\label{Low_Freq_Main_1}
\Vert \Delta_{-1} \Lambda^\ell [\mathcal{G}(t,x)\mathbf{U}_{0}]\Vert_{\mathcal{H}}\lesssim \Vert \mathbf{V}_0\Vert_{\dot{B}_{2,\infty}^{-3/2}} (1+t)^{-\frac{\ell}{2}-\frac{3}{4}}.
\end{equation}
Hence, this yields by using Lemma \ref{Lemma_Dumhamel},  
\begin{equation}\label{Duh_Estimate_K}
\begin{aligned}
\Vert\Delta_{-1}\Lambda^{\ell}\mathbf{U}\Vert_{\mathcal{H}}\leq &\, \Vert\Delta_{-1} \Lambda^\ell [\mathcal{G}(t)\mathbf{U}_{0}]\Vert _{\mathcal{H}}+\int_0^t\Vert \Delta_{-1} \Lambda^\ell[\mathcal{G}(t-r)\mathcal{F}(\mathbf{%
U},\nabla \mathbf{U})(r)]\Vert_{\mathcal{H}} \textup{d}r\\
\lesssim  &\, \Vert \mathbf{V}_0\Vert_{\dot{B}_{2,\infty}^{-3/2}} (1+t)^{-\frac{\ell}{2}-\frac{3}{4}}
 +\mathrm{K}_1,
\end{aligned}
\end{equation}
where 
\begin{equation}
\mathrm{K}_1=\int_0^{t}\Vert \Delta_{-1} \Lambda^\ell[\mathcal{G}(t-r)\mathcal{F}(\mathbf{%
U},\nabla \mathbf{U})(r)]\Vert_{\mathcal{H}} \textup{d}r.
\end{equation}
To estimate $\mathrm{K}_1$,  applying \eqref{Low_Freq_Main_1} for $s=3/2$ and using the embedding $L^1(\R^3)\hookrightarrow \dot{B}_{2,\infty}^{-3/2}(\R^3) $, we deduce the following estimate 
\begin{equation}\label{K_1_Estimate}
\begin{aligned}
\mathrm{K}_1\lesssim&\, \int_0^{t} (1+t-r)^{-\frac{3}{4}-\frac{\ell}{2}}\Vert (vw+\nabla u\nabla v)(r)\Vert_{\dot{B}_{2,\infty}^{-3/2}}\textup{d}r\\
\lesssim&\,\int_0^{t} (1+t-r)^{-\frac{3}{4}-\frac{\ell}{2}}\Vert (vw+\nabla u\nabla v)(r)\Vert_{L^1}\textup{d}r.
\end{aligned}
\end{equation}
 We have by using H\"older's inequality,
\begin{equation}\label{Holder_1}
\Vert (vw+\nabla u\nabla v)(t)\Vert_{L^1}\leq \Vert \mathbf{V}(t)\Vert_{L^2}^2+\Vert
w(t)\Vert_{L^2}^2.
\end{equation}
Applying \eqref{L_p_Estimate}, we get $\Vert
w(t)\Vert_{L^2}\lesssim \Vert
w(t) \Vert_{\dot{B}^{0}_{2,1}}$ and 
hence, we obtain  
\begin{equation}\label{w_Estimate_Decay}
\Vert
w(t)\Vert_{L^2}^2\lesssim  (1+t)^{-3/2}  \mathcal{M}^2(t).
\end{equation}
Using the embedding $B_{2,1}^{3/2}\hookrightarrow L^2$, we find 
\begin{equation}\label{V_Estimate_Decay}
\Vert \mathbf{V}(t)\Vert_{L^2}^2\lesssim (1+t)^{-3/2}  \mathcal{M}^2(t). 
\end{equation}
Plugging  \eqref{w_Estimate_Decay} and  \eqref{V_Estimate_Decay} into \eqref{Holder_1}, we arrive at 
\begin{equation}\label{Holder_2}
\Vert (vw+\nabla u\nabla v)(t)\Vert_{L^1}\leq (1+t)^{-3/2}  \mathcal{M}^2(t).
\end{equation}
Hence, inserting  \eqref{Holder_2} into \eqref{K_1_Estimate}, we find 
\begin{equation}\label{K_1_Estimate_2}
\begin{aligned}
\mathrm{K}_1\lesssim&\, \mathcal{M}^2(t)
\int_0^{t} (1+t-r)^{-\frac{3}{4}-\frac{\ell}{2}}(1+r)^{-3/2}\textup{d}r\\
\lesssim&\, \mathcal{M}^2(t)(1+t)^{-\frac{3}{4}-\frac{\ell}{2}},
\end{aligned}
\end{equation}
where we applied   Lemma \ref{Lemma_Segel} in the last inequality. 

\noindent Now, plugging \eqref{K_1_Estimate_2} into \eqref{Duh_Estimate_K}, we deduce that  
\begin{equation}\label{Low_Frequency_Esti_0}
\Vert\Delta_{-1}\Lambda^{\ell}\mathbf{U}\Vert_{\mathcal{H}}\lesssim \Vert \mathbf{V}_0\Vert_{\dot{B}_{2,\infty}^{-3/2}} (1+t)^{-\frac{\ell}{2}-\frac{3}{4}}+ \mathcal{M}^2(t)(1+t)^{-\frac{3}{4}-\frac{\ell}{2}}.
\end{equation}
\begin{description}
\item[(ii) The case $\ell=3/2$] 
\end{description}

In this case, we have by using Lemma \ref{Lemma_Dumhamel},   
\begin{equation}\label{Duh_Estimate_3_2}
\begin{aligned}
\sum_{q<0}\Vert\dot{\Delta}_{q}\Lambda^{\frac{3}{2}}\mathbf{V}\Vert_{L^2} =&\, \sum_{q<0}\Vert\dot{\Delta}_{q}\Lambda^{\frac{3}{2}}\mathbf{U}\Vert_{\mathcal{H}}\\
\leq &\, \sum_{q<0}\Vert\dot{\Delta}_{q}\Lambda^{\frac{3}{2}} [\mathcal{G}(t)\mathbf{U}_{0}]\Vert _{\mathcal{H}}\\
&+\int_0^t \sum_{q<0}\Vert \dot{\Delta}_{q} \Lambda^{\frac{3}{2}}[\mathcal{G}(t-r)\mathcal{F}(\mathbf{%
U},\nabla \mathbf{U})(r)]\Vert_{\mathcal{H}} \textup{d}r\\
\lesssim  &\, \Vert \mathbf{V}_0\Vert_{\dot{B}_{2,\infty}^{-3/2}} (1+t)^{-\frac{3}{2}}
 +\mathrm{K}_2,
\end{aligned}
\end{equation}
where we have used \eqref{low_Frequncy_Estimate} together with the fact that  
$\Vert \Lambda^{\frac{3}{2}} \dot{\Delta}_{q}\mathbf{V}\Vert_{L^2}\approx 2^{q\frac{3}{2}}\Vert \dot{\Delta}_{q}\mathbf{V}\Vert_{L^2}$. To estimate $\mathrm{K}_2$, we apply \eqref{low_Frequncy_Estimate}, to get 
 \begin{equation}
 \begin{aligned}
\mathrm{K}_2=&\, \int_0^t \sum_{q<0}\Vert \dot{\Delta}_{q} \Lambda^{\frac{3}{2}}[\mathcal{G}(t-r)\mathcal{F}(\mathbf{%
U},\nabla \mathbf{U})(r)]\Vert_{\mathcal{H}} \textup{d}r\\
\lesssim &\,\int_0^t  (1+t-r)^{-\frac{3}{2}}\Vert (vw+\nabla u\nabla v)(r)\Vert_{\dot{B}_{2,\infty}^{-3/2}}\textup{d}r\\
\lesssim&\,\mathcal{M}^2(t)(1+t)^{-\frac{3}{2}}
\end{aligned}
\end{equation}
as in the estimate of $\mathrm{K}_1$. Consequently, we deduce that 
\begin{equation}\label{Low_estimate_3_2}
\sum_{q<0}\Vert\dot{\Delta}_{q}\Lambda^{\frac{3}{2}}\mathbf{V}\Vert_{L^2}\lesssim \Vert \mathbf{V}_0\Vert_{\dot{B}_{2,\infty}^{-3/2}} (1+t)^{-\frac{3}{2}}+\mathcal{M}^2(t)(1+t)^{-\frac{3}{2}}. 
\end{equation}
Collecting \eqref{Duhamel_H_F_Estimate_2} and \eqref{Low_estimate_3_2}, we obtain 
\begin{equation}\label{V_3_2_Estimate_0}
\begin{aligned}
\Vert\Lambda^{\frac{3}{2}}\mathbf{V}(r)\Vert_{\dot{B}^{0}_{2,1}}\lesssim&\,  \big(\mathbbmss{E}_0
+ \vecc{E}(t)\mathcal{M}(t)+\mathcal{M}^2(t)\big)(1+t)^{-\frac{3}{2}}. 
\end{aligned}
\end{equation}
\subsubsection*{Step 3. The estimate of $\Vert\mathbf{V}\Vert_{\dot{B}^{\frac{5}{2}}_{2,1}}$}

Taking $\ell=5/2$ in  \eqref{Duhamel_Prin_Main}, we get as in  Step 1:  
\begin{equation}\label{Duhamel_H_F_Estimate_V}
\sum_{q\geq 0} \Vert\dot{\Delta}_q \Lambda^{\frac{5}{2}}\mathbf{V}\Vert_{L^2}\lesssim e^{-ct} \Vert \mathbf{V}_0\Vert_{\dot{B}_{2,1}^{5/2}}+\int_0^t e^{-c(t-r)}\Vert (vw+\nabla u\nabla v)(r)\Vert_{\dot{B}_{2,1}^{5/2}}\textup{d}r. 
\end{equation}
Now, recalling \eqref{Prod_Estima_Hom}, we obtain   
\begin{equation}\label{Nonl_Term_Decay_5_2}
\begin{aligned}
\Vert(vw+\nabla u\nabla v)\Vert_{\dot{B}_{2,1}^{5/2}}\lesssim&\, \Vert w\Vert_{L^\infty} \Vert v\Vert_{\dot{B}_{2,1}^{5/2}}+\Vert v\Vert_{L^\infty} \Vert w\Vert_{\dot{B}_{2,1}^{5/2}}\\
&+\Vert \nabla u\Vert_{L^\infty}\Vert \nabla v\Vert_{\dot{B}_{2,1}^{5/2}}+\Vert \nabla v\Vert_{L^\infty}\Vert \nabla u\Vert_{\dot{B}_{2,1}^{5/2}}.
\end{aligned}
\end{equation}
Next, we have by using  the embedding $\dot{B}^{3/2}_{2,1}(\R^3) \hookrightarrow L^\infty(\R^3)$, together with \eqref{Equiv_Besov_ell}, 
\begin{equation}\label{First_Term_prod}
\begin{aligned}
&\Vert w\Vert_{L^\infty} \Vert v\Vert_{\dot{B}_{2,1}^{5/2}}+\Vert v\Vert_{L^\infty} \Vert w\Vert_{\dot{B}_{2,1}^{5/2}}\\
\lesssim&\, \Vert  w\Vert_{\dot{B}_{2,1}^{3/2}}\Vert  v\Vert_{\dot{B}_{2,1}^{5/2}}+\Vert  v\Vert_{\dot{B}_{2,1}^{3/2}} (\Vert  v\Vert_{\dot{B}_{2,1}^{5/2}}+\Vert v+\tau w\Vert_{\dot{B}_{2,1}^{5/2}})\\
\lesssim&\,\Vert  w\Vert_{\dot{B}_{2,1}^{3/2}}\Vert  \nabla v\Vert_{\dot{B}_{2,1}^{3/2}}+\Vert  v\Vert_{\dot{B}_{2,1}^{3/2}} (\Vert  \nabla v\Vert_{\dot{B}_{2,1}^{3/2}}+\Vert v+\tau w\Vert_{\dot{B}_{2,1}^{5/2}})\\
\lesssim&\,\Vert  w\Vert_{\dot{B}_{2,1}^{3/2}}\Vert  \mathbf{V}\Vert_{\dot{B}_{2,1}^{3/2}}+\Vert  v\Vert_{\dot{B}_{2,1}^{3/2}}(\Vert  \mathbf{V}\Vert_{\dot{B}_{2,1}^{3/2}}+\Vert\mathbf{V}\Vert_{\dot{B}^{5/2}_{2,1}}).
\end{aligned}
\end{equation}
Similarly, we have 
\begin{equation}\label{Second_Term_prod}
\begin{aligned}
&\Vert \nabla u\Vert_{L^\infty}\Vert \nabla v\Vert_{\dot{B}_{2,1}^{5/2}}+\Vert \nabla v\Vert_{L^\infty}\Vert \nabla u\Vert_{\dot{B}_{2,1}^{5/2}}\\
\lesssim&\,\Vert  \nabla u\Vert_{\dot{B}_{2,1}^{3/2}}\Vert\mathbf{V}\Vert_{\dot{B}^{\frac{5}{2}}_{2,1}}+\Vert  \nabla v\Vert_{\dot{B}_{2,1}^{3/2}}\Vert\mathbf{V}\Vert_{\dot{B}^{\frac{5}{2}}_{2,1}}. 
\end{aligned}
\end{equation}
Hence, collecting  \eqref{Nonl_Term_Decay_5_2}, \eqref{First_Term_prod} and \eqref{Second_Term_prod}, we get by plugging  into \eqref{Duhamel_H_F_Estimate_V} and recalling the definition of $\mathcal{M}(t)$,  
\begin{equation}\label{Duhamel_H_F_Estimate_V_2}
\begin{aligned}
\sum_{q\geq 0} 2^{5q/2}\Vert\dot{\Delta}_q \Lambda^{\frac{5}{2}}\mathbf{V}\Vert_{L^2}\lesssim&\, e^{-ct} \Vert \mathbf{V}_0\Vert_{\dot{B}_{2,1}^{5/2}}+ \vecc{E}(t)\mathcal{M}(t)\int_0^t e^{-c(t-r)}(1+r)^{-\frac{3}{2}}\textup{d}r\\
\lesssim&\, e^{-ct} \Vert \mathbf{V}_0\Vert_{\dot{B}_{2,1}^{5/2}}+ \vecc{E}(t)\mathcal{M}(t)(1+t)^{-\frac{3}{2}}, 
\end{aligned}
\end{equation}
 where we have used \eqref{Exponential_Estimate}.  We also point out that we have $\Vert \mathbf{V}_0\Vert_{\dot{B}_{2,1}^{5/2}}\lesssim   \Vert \mathbf{ U}_0\Vert_{\mathbf{\mathcal{B}}_{2,1}^{3/2}}$.

Now, for the low frequency estimate, we proceed as in \eqref{Duh_Estimate_3_2} and obtain 
\begin{equation}
\begin{aligned}
\sum_{q<0}\Vert\dot{\Delta}_{q}\Lambda^{\frac{5}{2}}\mathbf{V}\Vert_{L^2} \lesssim  &\, \Vert \mathbf{V}_0\Vert_{\dot{B}_{2,\infty}^{-3/2}} (1+t)^{-2}
 +\mathrm{K}_3
\end{aligned}
\end{equation}
with 
\begin{equation}
 \begin{aligned}
\mathrm{K}_3= &\,\int_0^t  (1+t-r)^{-2}\Vert (vw+\nabla u\nabla v)(r)\Vert_{\dot{B}_{2,\infty}^{-3/2}}\textup{d}r\\
\lesssim&\,\mathcal{M}^2(t)(1+t)^{-3/2}\\
\end{aligned}
\end{equation}
Hence, this yields 
\begin{equation}\label{Low_Freq_Estimate_5_2}
\sum_{q<0}\Vert\dot{\Delta}_{q}\Lambda^{\frac{5}{2}}\mathbf{V}\Vert_{L^2} \lesssim  \Vert \mathbf{V}_0\Vert_{\dot{B}_{2,\infty}^{-3/2}} (1+t)^{-2}
 +\mathcal{M}^2(t)(1+t)^{-3/2}. 
\end{equation}
Collecting \eqref{Duhamel_H_F_Estimate_V_2} and \eqref{Low_Freq_Estimate_5_2},  we deduce that 
\begin{equation}\label{Estimate_V_5_2_0}
\Vert\mathbf{V}(t)\Vert_{\dot{B}^{\frac{5}{2}}_{2,1}}\lesssim \big( \mathbbmss{E}_0+\vecc{E}(t)\mathcal{M}(t)+\mathcal{M}^2(t)\big)(1+t)^{-3/2}. 
\end{equation}
\subsubsection*{Step 4. The estimates of $\Vert\Lambda^{\frac{3}{2}}w\Vert_{\dot{B}^{0}_{2,1}}$ and $\Vert w(r)\Vert_{\dot{B}^{0}_{2,1}} $}

We multiply the third equation in \eqref{System_New_Localized} by $\dot{\Delta}_qw$ and integrate with respect to space,  using H\"older's inequality, to arrive at
\begin{equation}
\begin{aligned}
\frac{{1}}{2}\frac{\textup{d}}{\textup{d}t}\Vert \dot{\Delta}_qw\Vert_{L^2} ^{2}+\frac{1}{\tau}\Vert \dot{\Delta}_q w\Vert_{L^2} ^{2}\lesssim&\,\frac{1}{\tau} \Big(\Vert \Delta \dot{\Delta}_q u \Vert_{L^2}\Vert \dot{\Delta}_qw\Vert_{L^2} +\Vert \Delta \dot{\Delta}_q v \Vert_{L^2}\Vert \dot{\Delta}_qw\Vert_{L^2}\Big.\\
&\Big.+\Vert \mathrm{R}_q\Vert_{L^2}\Vert \dot{\Delta}_qw\Vert_{L^2} \Big)
\end{aligned}  
\end{equation}  
which yields 
\begin{equation}\label{w_Eenrgy_Nonl}
\begin{aligned}
\frac{\textup{d}}{\textup{d}t}\Vert \dot{\Delta}_qw\Vert_{L^2} +\frac{1}{\tau}\Vert \dot{\Delta}_q w\Vert_{L^2} \lesssim&\, \frac{1}{\tau}\Big(\Vert \Delta \dot{\Delta}_q u \Vert_{L^2} +\Vert \Delta \dot{\Delta}_q v \Vert_{L^2}+\Vert \mathrm{R}_q\Vert_{L^2}\Big) \\
\lesssim&\, \Big(\Vert \Delta \dot{\Delta}_q (u+\tau v)\Vert_{L^{2}}+\Vert \Delta \dot{\Delta}_q  v\Vert_{L^{2}}\Big.
\\
&\Big.+ \Vert \dot{\Delta}_q(  vw+\nabla u \cdot \nabla v)\Vert_{L^2}\Big). 
\end{aligned}  
\end{equation}
Then, we obtain  
\begin{equation}
\begin{aligned}
\frac{\textup{d}}{\textup{d}t}\Vert \dot{\Delta}_q w\Vert_{L^2}+\frac{1}{\tau}\Vert \dot{\Delta}_q w\Vert_{L^2} \lesssim\, \frac{1}{\tau}\Big(\Vert \nabla\dot{\Delta}_q\mathbf{V}\Vert_{L^2}+\Vert \dot{\Delta}_q(  vw+\nabla u \cdot \nabla v)\Vert_{L^2}\Big).
\end{aligned}   
\end{equation}
We  multiply the above inequality by $e^{ t}$, we arrive at
\begin{equation}
\begin{aligned}
\frac{\textup{d}}{\textup{d}t} \left(e^{\frac{1}{\tau} t}\Vert \dot{\Delta}_q w\Vert_{L^2} \right)\lesssim \frac{1}{\tau}e^{ \frac{1}{\tau}t}\left(\Vert \nabla\dot{\Delta}_q\mathbf{V}\Vert_{L^2}+\Vert \dot{\Delta}_q(  vw+\nabla u \cdot \nabla v)\Vert_{L^2}\right).     
\end{aligned}
\end{equation}
By  integrating with respect to time, we obtain 
\begin{equation}\label{Energy_First_Order}
\begin{aligned}
 \Vert \dot{\Delta}_q w\Vert_{L^2} \lesssim&\, \frac{1}{\tau}  e^{-
\frac{1}{\tau}t}\Vert \dot{\Delta}_q w_0 \Vert_{L^2} \\
 &+\frac{1}{\tau}\int_0^t e^{-\frac{1}{\tau}(t-r)}\left(\Vert \nabla\dot{\Delta}_q\mathbf{V}(r)\Vert_{L^2}
 +\Vert \dot{\Delta}_q(  vw+\nabla u \cdot \nabla v)(r)\Vert_{L^2}\right)\, \textup{d}r. 
\end{aligned}
\end{equation}
We multiply the above inequality by $2^{\frac{3q}{2}}$, summing the resulting inequality over $q\in \Z$, we get 
\begin{equation}
\begin{aligned}
\Vert w\Vert_{\dot{B}_{2,1}^{3/2}}\lesssim&\, \frac{1}{\tau}e^{-\frac{1}{\tau}t} \Vert w_0\Vert_{\dot{B}_{2,1}^{3/2}}+\int_0^te^{-\frac{1}{\tau}(t-r)}\Big(\Vert\mathbf{V}(r)\Vert_{\dot{B}_{2,1}^{5/2}}+\Vert (  vw+\nabla u \cdot \nabla v)(r)\Vert_{\dot{B}_{2,1}^{3/2}}\Big). 
\end{aligned}
\end{equation}

Now, recalling \eqref{Estimate_V_5_2_0},  \eqref{Nonl_Term_Decay} and using  \eqref{Exponential_Estimate}, we obtain 
\begin{equation}\label{w_Estimate_3_2}
\begin{aligned}
\Vert w\Vert_{\dot{B}_{2,1}^{3/2}}\lesssim&\, e^{-\frac{1}{\tau}t} \Vert w_0\Vert_{\dot{B}_{2,1}^{3/2}}
+\big( \mathbbmss{E}_0+\vecc{E}(t)\mathcal{M}(t)+\mathcal{M}^2(t)\big)
(1+t)^{-\frac{3}{2}}\\
\lesssim&\,  \big( \mathbbmss{E}_0+\vecc{E}(t)\mathcal{M}(t)+\mathcal{M}^2(t)\big)
(1+t)^{-\frac{3}{2}}.
\end{aligned}
\end{equation}
By the same method, we can further show that  summing \eqref{Energy_First_Order} over $q\in \Z$ (we omit the details) 
\begin{equation}
\begin{aligned}
\Vert w\Vert_{\dot{B}_{2,1}^{0}}\lesssim&\, e^{-\frac{1}{\tau} t} \Vert w_0\Vert_{\dot{B}_{2,1}^{0}}+\int_0^te^{-\frac{1}{\tau}(t-r)}\Big(\Vert\mathbf{V}(r)\Vert_{\dot{B}_{2,1}^{1}}+\Vert (  vw+\nabla u \cdot \nabla v)(r)\Vert_{\dot{B}_{2,1}^{0}}\Big). 
\end{aligned}
\end{equation}
First, we have from  \eqref{Duhamel_H_F_Estimate_2} and \eqref{Low_Frequency_Esti_0} (for $\ell=0$)
\begin{equation}
\Vert\mathbf{V}(r)\Vert_{\dot{B}_{2,1}^{1}}\lesssim \Vert\mathbf{V}(r)\Vert_{B_{2,1}^{3/2}}\lesssim \big( \mathbbmss{E}_0+\vecc{E}(t)\mathcal{M}(t)+\mathcal{M}^2(t)\big)
(1+r)^{-\frac{3}{4}}. 
\end{equation}

Now, using the estimate  (see Lemma \ref{Lemma_Danchin})
\begin{equation}
\begin{aligned}
\Vert (  vw+\nabla u \cdot \nabla v)(r)\Vert_{\dot{B}_{2,1}^{0}}\lesssim&\, \Vert v\Vert_{\dot{B}_{2,1}^{3/2}}\Vert w\Vert_{\dot{B}_{2,1}^{0}}+\Vert \nabla u\Vert_{\dot{B}_{2,1}^{3/2}}\Vert \nabla v\Vert_{\dot{B}_{2,1}^{0}}\\
\lesssim&\, \Vert v\Vert_{\dot{B}_{2,1}^{3/2}}\Vert w\Vert_{\dot{B}_{2,1}^{0}}+\Vert \nabla u\Vert_{\dot{B}_{2,1}^{3/2}}\Vert\mathbf{V}\Vert_{B_{2,1}^{3/2}}\\
\lesssim&\,\vecc{E}(t)\mathcal{M}(t)(1+r)^{-3/4}
\end{aligned}
\end{equation} 
Hence, we obtain as in the proof of \eqref{w_Estimate_3_2}, 

\begin{equation}\label{w_0_2_Estimate}
\Vert w\Vert_{\dot{B}^{0}_{2,1}}\lesssim  \big( \mathbbmss{E}_0+\vecc{E}(t)\mathcal{M}(t)+\mathcal{M}^2(t)\big)
(1+t)^{-\frac{3}{4}}. 
\end{equation}

\noindent Consequently, \eqref{Estimate_M_Nonl} holds by collecting \eqref{Duhamel_H_F_Estimate_2}, \eqref{Low_Frequency_Esti_0}, \eqref{V_3_2_Estimate_0}, \eqref{Estimate_V_5_2_0}, \eqref{w_Estimate_3_2} and \eqref{w_0_2_Estimate}. This finishes the proof of Proposition \ref{Proposition_Decay_Nonl}.  

\subsection{Proof of Theorem \ref{Decay_Estimate_Theorem}}\label{Sec:Proof_Theorem_2}
From Theorem \ref{Main_Theorem_Nonl}, we have 
\begin{equation}
\vecc{E}(t)\lesssim \Vert \mathbf{ U}_0\Vert_{\mathbf{\mathcal{B}}_{2,1}^{3/2}}\lesssim  \mathbbmss{E}_0. 
\end{equation}
Hence, for $\mathbbmss{E}_0$ sufficiently small, we have  
from \eqref{Estimate_M_Nonl} 
\begin{equation}\label{Estimate_M_Nonl_Main}
\mathcal{M}(t)\lesssim  \mathbbmss{E}_0+\mathcal{M}^2(t).
\end{equation} 
Applying Lemma \ref{Lemma_Stauss}, we deduce from \eqref{Estimate_M_Nonl_Main} that 
\begin{equation}\label{Estimate_final_Decay}
\mathcal{M}(t)\lesssim  \mathbbmss{E}_0
\end{equation}
provided that $ \mathbbmss{E}_0$ is sufficiently small. 
Consequently, the decay estimates in Theorem \ref{Decay_Estimate_Theorem} results from \eqref{Estimate_final_Decay}. This ends the proof of Theorem \ref{Decay_Estimate_Theorem}. 

%%%%%%%%%%%%%%%%%%%%%%%%%%%%%%%%%
\appendix
\begin{appendices}  
\section{Analytic tools}\label{Appendix_A}
In this appendix, we introduce the Littlewood--Paley decomposition,  define the Besov spaces and list some of their properties. See e.g. \cite{bahouri2011fourier} for more details. 
\subsection{Littlewood-Paley theory}
 Let $\mathcal{S}(\mathbb{R}^n)$ be the Schwartz class of rapidly decreasing functions. For a given $f$
in $\mathcal{S}(\mathbb{R}^n)$, we define its Fourier transform $\mathcal{F}%
f=\hat{f}$ and its inverse $\mathcal{F}^{-1}\hat{f}$ as 
\begin{equation}
\hat{f}(\xi)=\int_{\mathbb{R}^n}e^{-ix\cdot \xi} f(x)
\dx,\quad \text{and}\quad \mathcal{F}^{-1}\hat{f}(x)=\frac{1}{(2\pi)^{n}}\int_{\mathbb{R}^n}e^{i x\cdot\xi}\hat{%
f}(\xi)\textup{d}\xi.
\end{equation}

Let $(\varphi, \chi(\xi))$ be a couple of smooth functions  valued in $[0,1]$ such that $\varphi$ is supported in
the annulus $\mathcal{C}=\{\xi\in \mathbb{R}^n,\,\, \frac{3}{4}\leq |\xi|\leq \frac{8}{3}%
\}$, $\chi$ is supported in the ball 
 $\mathcal{B}_0=\{\xi\in \mathbb{R}^n,\,\,|\xi|\leq \frac{4}{3}\}$, such that  
%Let $\varphi(\xi)$ be the function 
%\begin{equation}
%\varphi(\xi)=\chi(\xi)-\chi(4\xi/3),
%\end{equation}
%then, $\varphi\in \mathcal{S}(\mathbb{R}^n)$ and it is supported on the annulus 
%$\mathcal{C}=\{\xi\in \mathbb{R}^n,\,\, \frac{3}{4}\leq |\xi|\leq \frac{8}{3}%
%\}$. We define 
%\begin{equation}
%\varphi_q=\varphi(\xi/2^{q}),\quad \text{and}\quad
%\chi_q(\xi)=\chi(\xi/2^{q})\qquad q\in \mathbb{Z}.
%\end{equation}
%We have the following partitions of unity 
\begin{equation}
\chi(\xi)+\sum_{q\geq 0} \varphi(2^{-q}\xi)=1,\quad \xi\in \mathbb{R}^n
\end{equation}
and 
\begin{equation}
\sum_{q\in\mathbb{Z}} \varphi(2^{-q}\xi)=1,\quad \xi\in \mathbb{R}^n \setminus
\{0\}.
\end{equation}
Denote by  $h:=\mathcal{F}^{-1} \varphi$ and $\widetilde{h}:=\mathcal{F}^{-1} \chi$.  For $f\in \mathcal{S}'$, define 
\begin{equation}
\begin{aligned}
\dot{\Delta}_q f=&\, \varphi(2^{-q}D )  f=2^{dq}\int_{\mathbb{R}^n}h(2^qy)f(x-y)\textup{d}y,\quad q\in \Z;\\
\Delta_q f=&\, \varphi(2^{-q}D )  f=2^{dq}\int_{\mathbb{R}^n}h(2^qy)f(x-y)\textup{d}y,\quad q\geq 0;\\
\Delta_{-1} f=&\, \chi(D )  f=\int_{\mathbb{R}^n}\widetilde{h}(y)f(x-y)\textup{d}y,\qquad \Delta_{q} f=0\,\ \text{for}\,\ q\leq -2.
\end{aligned}
\end{equation}
Observe that $\dot{\Delta}_q$ coincide with $\Delta_q$ for $q\geq 0$.

%We now define the Littlewood--Paley operators $\Delta_q$ and $S_q$ as
%follows: 
%\begin{equation}
%\widehat{\Delta_q f}(\xi)=\varphi(2^{-q}\xi)\hat{f}(\xi),\quad q\geq 0
%\end{equation}
%and 
%\begin{equation}
%\widehat{S_q f}(\xi)=\chi(2^{-q}\xi)\hat{f}(\xi).
%\end{equation}
%Let $h=\mathcal{F}^{-1} \varphi$ and $\widetilde{h}=\mathcal{F}^{-1} \chi$.  Hence, we write  
%\begin{equation}
%\Delta_q(f)(x)=2^{dq}\int_{\mathbb{R}^n}h(2^qy)f(x-y)dy,\quad
%q\geq 0
%\end{equation}
%and 
%\begin{equation}
%S_q(f)(x)=2^{dq}\int_{\mathbb{R}^n}\widetilde{h}(2^q y)f(x-y)dy.
%\end{equation}
%Hence, we have from above 
%\begin{equation}
%\Delta_q=S_q-S_{q-1}.
%\end{equation}
%We may also write 
%\begin{equation}
%S_q=\sum_{m=-\infty}^{q-1}\Delta_m.
%\end{equation}
%Occasionally we write $\Delta_{-1}$ instead of $S_0$. 
%
%We now define $\dot{\Delta}_q$ as 
%\begin{equation}
%\widehat{\dot{\Delta}_q f}(\xi)=\varphi_q(\xi)\hat{f}(\xi),\, q\in\mathbb{Z}\quad \text{or}\quad \dot{\Delta}_q f(x)=(h\ast f)(x) .
%\end{equation}
%and 
%\begin{equation}
%\dot{S}_q=\sum_{m=-\infty}^{q-1}\dot{\Delta}_m. 
%\end{equation}
%We may easily verify that 
%\begin{equation}  \label{Proper_Delta_k}
%\Delta_l\Delta_q f=0, \quad \text{if}\quad |l-q|\geq 2\quad \text{and}\quad
%\Delta_l(S_{q-1}f\Delta_q f)=0,\quad \text{if}\quad |l-q|\geq 5.
%\end{equation}
%We also have, for any $1\leq p\leq \infty$ 
%\begin{equation}  \label{Boundness_Delta_k}
%\Vert \Delta_q f\Vert_{p}\lesssim \Vert f\Vert_{p}\qquad \text{and}\qquad
%\Vert S_q f\Vert_{p}\lesssim \Vert f\Vert_{p}.
%\end{equation}
Now, denote by $\mathcal{S}_0^\prime=\mathcal{S}^\prime/\mathcal{P}$ the
tempered distribution modulo polynomials $\mathcal{P}$.
It is not hard to see that the space $\mathcal{S}_0^\prime$ is exactly the
space of tempered distributions for which we may write 
\begin{equation}
u=\sum_{q\in \mathbb{Z}}\dot{\Delta}_qu.
\end{equation}
This decomposition is called the homogeneous Littlewood-Paley decomposition.

We recall 
the Bernstein inequality, which is the fundamental inequality in the  Littlewood--Paley theory. See \cite{bahouri2011fourier}.  

\begin{lemma}[Bernstein inequality]
\label{Lemma_Bernstein}
Let $0<R_1<R_2$ and $	1\leq a\leq b\leq \infty$ and  
let $k$ be an integer. 
\begin{description}
\item[ (i)] If $\supp (\hat{f})\subset\{\xi\in \R^n: |\xi|\leq R_1 \lambda\}$, then 
\begin{equation}
\Vert \Lambda ^kf\Vert_{L^a}\lesssim \lambda^{k+n\big(\frac{1}{a}-\frac{1}{b}\big)}\Vert f\Vert_{L^b}. 
\end{equation}
\item[ (ii)] If $\supp (\hat{f})\subset\{\xi\in \R^n: R_1 \lambda\lesssim |\xi|\leq R_2 \lambda\}$, then 
\begin{equation}\label{Berns_2}
\Vert \Lambda^kf\Vert_{L^a}\approx \lambda^{k}\Vert f\Vert_{L^a}. 
\end{equation}
\end{description}
As a consequence of the above inequality, we have 
\begin{equation}\label{Equiv_Besov_ell}
\Vert \Lambda^\ell f\Vert_{B_{p,r}^s}\lesssim \Vert  f\Vert_{B_{p,r}^{s+\ell}}\,\quad (\ell\geq 0);\qquad \Vert \Lambda^\ell f\Vert_{\dot{B}_{p,r}^s}\approx \Vert f\Vert_{\dot{B}_{p,r}^{s+\ell}}\quad (\ell\in \R). 
\end{equation}

\end{lemma}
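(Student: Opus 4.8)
The lemma is classical; the plan is to realize each Fourier multiplier acting on a frequency-localized function as convolution against a suitable rescaling of one \emph{fixed} kernel, and then to invoke Young's convolution inequality. For part~(i), fix $\widetilde\chi\in C_c^\infty(\R^n)$ with $\widetilde\chi\equiv1$ on $\{|\xi|\le R_1\}$. Since $\supp\widehat f\subset\{|\xi|\le R_1\lambda\}$, inserting this cutoff costs nothing and
\[
\widehat{\Lambda^k f}(\xi)=|\xi|^{k}\,\widetilde\chi\!\big(\xi/(R_1\lambda)\big)\widehat f(\xi)=\lambda^{k}\Big(\big|\xi/\lambda\big|^{k}\widetilde\chi\big(\xi/(R_1\lambda)\big)\Big)\widehat f(\xi),
\]
so $\Lambda^k f=\lambda^{k}\,G_\lambda\!*\!f$ with $G_\lambda(x)=\lambda^{n}G(\lambda x)$ and $\widehat G(\xi)=|\xi|^{k}\widetilde\chi(\xi/R_1)$. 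One checks that $G\in L^1(\R^n)\cap L^\infty(\R^n)$, hence $G\in L^c(\R^n)$ for every $c\in[1,\infty]$; Young's inequality with $1+1/a=1/c+1/b$ then gives $\|\Lambda^k f\|_{L^a}\le\lambda^{k}\|G_\lambda\|_{L^c}\|f\|_{L^b}$, and the scaling identity $\|G_\lambda\|_{L^c}=\lambda^{\,n(1-1/c)}\|G\|_{L^c}$ supplies the remaining power of $\lambda$.

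For part~(ii), the upper bound $\|\Lambda^k f\|_{L^a}\lesssim\lambda^{k}\|f\|_{L^a}$ is immediate from part~(i) taken with $b=a$, since an annulus of radius $\sim\lambda$ is contained in a ball of radius $\sim\lambda$. For the matching lower bound I would invert $\Lambda^k$: choosing a smooth $\Phi$ equal to $1$ on the annulus $\{R_1\lambda\le|\xi|\le R_2\lambda\}$ and supported in a slightly larger one (in particular vanishing near the origin), one writes $\widehat f(\xi)=|\xi|^{-k}\Phi\big(\xi/(R_1\lambda)\big)\widehat{\Lambda^k f}(\xi)$, i.e. $f=\lambda^{-k}\widetilde G_\lambda\!*\!(\Lambda^k f)$ with $\widetilde G$ \emph{Schwartz} (its Fourier transform $|\xi|^{-k}\Phi(\xi)$ is smooth precisely because $\Phi$ vanishes near $0$). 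Young's inequality gives $\|f\|_{L^a}\lesssim\lambda^{-k}\|\Lambda^k f\|_{L^a}$, and combining the two directions yields \eqref{Berns_2}.

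The consequence \eqref{Equiv_Besov_ell} then follows by dyadic summation. For the homogeneous norm, apply $\dot{\Delta}_q$ to $\Lambda^\ell f$; since multipliers commute, $\dot{\Delta}_q\Lambda^\ell f=\Lambda^\ell(\dot{\Delta}_q f)$ has Fourier support in an annulus $|\xi|\sim2^{q}$, so part~(ii) with $\lambda=2^{q}$ gives $\|\dot{\Delta}_q\Lambda^\ell f\|_{L^p}\approx 2^{q\ell}\|\dot{\Delta}_q f\|_{L^p}$; multiplying by $2^{qs}$ and taking the $\ell^{r}(\Z)$-norm in $q$ produces $\|\Lambda^\ell f\|_{\dot B^{s}_{p,r}}\approx\|f\|_{\dot B^{s+\ell}_{p,r}}$ for every $\ell\in\R$. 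In the inhomogeneous case the blocks $\Delta_q$ with $q\ge0$ are handled identically; the single low-frequency block $\Delta_{-1}\Lambda^\ell f$ has Fourier support in a ball, so only part~(i) applies (with $b=a=p$ and $\ell\ge0$), giving the one-sided estimate $\|\Delta_{-1}\Lambda^\ell f\|_{L^p}\lesssim\|\Delta_{-1}f\|_{L^p}$; summing over $q\ge-1$ yields $\|\Lambda^\ell f\|_{B^{s}_{p,r}}\lesssim\|f\|_{B^{s+\ell}_{p,r}}$.

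The argument is otherwise mechanical, and the only step I would write out with care --- the main (small) obstacle --- is the claim that the kernel $G$ in part~(i) is integrable when $k$ is odd, since then $\xi\mapsto|\xi|^{k}$ fails to be smooth at the origin and the decay of $G$ must be extracted by integrating by parts a suitable number of times (this is immediate when $k$ is even, and holds in our setting $n=3$). This is exactly why part~(i) is restricted to $k\ge0$ --- for negative powers only the annular estimate of part~(ii) is available --- and why the inhomogeneous half of \eqref{Equiv_Besov_ell} is stated as an inequality rather than an equivalence.
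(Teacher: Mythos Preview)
The paper does not prove this lemma; it is simply recalled as classical with a reference to Bahouri--Chemin--Danchin. Your argument --- realize the multiplier as convolution against a rescaled fixed kernel, apply Young's inequality, invert the multiplier on the annulus for the two-sided bound in~(ii), then sum dyadically for \eqref{Equiv_Besov_ell} --- is precisely the standard proof given in that reference, so there is nothing substantive to compare against.

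One incidental remark: the displayed inequality in part~(i) of the paper's statement has the roles of $a$ and $b$ reversed relative to the usual Bernstein inequality. If you carry your Young-inequality step through with $1+1/a=1/c+1/b$ under the stated hypothesis $a\le b$, you obtain $1/c\ge1$, which is inadmissible. The correct statement reads $\|\Lambda^k f\|_{L^{b}}\lesssim\lambda^{\,k+n(1/a-1/b)}\|f\|_{L^{a}}$ for $a\le b$ (band-limited functions allow control of \emph{higher} Lebesgue norms by lower ones, not the reverse), and this is what your argument actually establishes. The discrepancy is a typo in the paper's statement, not a flaw in your method.
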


\subsection{Besov spaces}

Now, we introduce the definition of Besov spaces. These spaces are very
useful in the study of nonlinear PDEs since many embedding theorems which fail in Sobolev spaces are correct  in Besov spaces.  

\begin{definition}
For $s\in \mathbb{R}$ and $1\leq p,r\leq \infty$, the homogeneous Besov space  
$\dot{B}_{p,r}^{s}$ is defined as 
\begin{equation}
\dot{B}_{p,r}^{s}=\left\{f\in \mathcal{S}_0^\prime : \Vert f\Vert_{\dot{B}%
_{p,r}^{s}}<\infty\right\},
\end{equation}
where 
\begin{equation}
\Vert f\Vert_{\dot{B}_{p,r}^{s}}=\left\{ 
\begin{array}{ll}
\bigg(\displaystyle\sum_{q\in \mathbb{Z}} \left(2^{qs}\Vert \dot{\Delta}_q
f\Vert_{L^p}\right)^r\bigg)^{1/r}, & \qquad r<\infty\vspace{0.2cm} \\ 
\displaystyle\sup_{j\in\mathbb{Z}}2^{qs}\Vert \dot{\Delta}_q f\Vert_{L^p}, & 
\qquad r=\infty. 
\end{array}
\right.
\end{equation}
\end{definition}

Similarly, we define the inhomogeneous Besov space $B^{s}_{p,r}$ as:

\begin{definition}
For $s\in \mathbb{R}$ and $1\leq p,r\leq \infty$, the inhomogeneous Besov
space $B^{s}_{p,r}$ is defined as 
\begin{equation}
B^{s}_{p,r}=\left\{f\in \mathcal{S}^\prime : \Vert
f\Vert_{B^{s}_{p,r}}<\infty\right\},
\end{equation}
where 
\begin{equation}
\Vert f\Vert_{B_{p,r}^{s}}=\left\{ 
\begin{array}{ll}
\bigg(\displaystyle\sum_{q= -1}^\infty \left(2^{qs}\Vert \Delta_q
f\Vert_{L^p}\right)^r\bigg)^{1/r}, & \qquad r<\infty\vspace{%
0.2cm} \\ 
\displaystyle\sup_{q\geq -1}2^{qs}\Vert \Delta_q f\Vert_p,
& \qquad r=\infty.%
\end{array}
\right.
\end{equation}
\end{definition}

We summarize the main properties of Besov spaces in the following lemma. See \cite[Lemma 2.2]{Kawashima_1} for the properties (1)-(5) and \cite[Proposition 1.4]{Danchin_2001} for the properties (6) and (7). 
\begin{lemma}
Let $s\in \R$ and $1\leq p,r\leq \infty$, then 
\begin{enumerate}[label=\arabic*.,ref=.\arabic*]
\item If $s>0$, then $B^{s}_{p,r}=L^p\cap \dot{B}_{p,r}^{s}$ and $\Vert
f\Vert_{B_{p,r}^{s}}\approx \Vert f\Vert_{p}+\Vert f\Vert_{\dot{B}%
_{p,r}^{s}} $.
\item If $\tilde{s}\leq s$, then
$B^{s}_{p,r}\hookrightarrow B^{\tilde{s}}_{p,r} $. 
\item If $1\leq r\leq \tilde{r}\leq \infty$, then $\dot{B}_{p,r}^{s}\hookrightarrow\dot{B}_{p,\tilde{r}}^{s}$ and $B_{p,r}^{s}\hookrightarrow B_{p,\tilde{r}}^{s}$.
\item If $1\leq p\leq \tilde{p}\leq \infty$, then $\dot{B}_{p,r}^{s}\hookrightarrow\dot{B}_{\tilde{p},r}^{s-n(\frac{1}{p}-\frac{1}{\tilde{p}})}$ and $B_{p,r}^{s}\hookrightarrow B_{\tilde{p},r}^{s-n(\frac{1}{p}-\frac{1}{\tilde{p}})}$.
\item $\dot{B}_{p,1}^{n/p}\hookrightarrow \mathcal{C}_0$ and $B_{p,1}^{n/p}\hookrightarrow \mathcal{C}_0$\quad $(1\leq p<\infty)$, where $\mathcal{C}_0$ is the space of bounded continuous functions which decay at infinity. 
\item There exists a universal constant $C$ such that 
\begin{equation}\label{Eqv_Norms}
C^{-1}\Vert f\Vert_{\dot{B}_{p,r}^s}\leq \Vert \nabla f\Vert_{\dot{B}_{p,r}^{s-1}}\leq C\Vert f\Vert_{\dot{B}_{p,r}^s}. 
\end{equation}
\item $\Vert \nabla u\Vert_{B_{p,r}^{s-1}}\lesssim \Vert  u\Vert_{B_{p,r}^{s}}$. 

\end{enumerate}

\end{lemma}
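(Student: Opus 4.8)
The plan is to establish the seven properties directly from the definitions of the dyadic blocks and from Bernstein's inequality (Lemma \ref{Lemma_Bernstein}); all of them are classical, and complete proofs are given in \cite[Lemma 2.2]{Kawashima_1} for (1)--(5) and in \cite[Proposition 1.4]{Danchin_2001} for (6)--(7), so I only indicate the non-routine points here. Properties (2) and (3) are pure bookkeeping on the sequences $\big(2^{qs}\Vert \Delta_q f\Vert_{L^p}\big)_{q\geq -1}$: for (2) one uses $2^{q\tilde{s}}\lesssim 2^{qs}$ for $q\geq 0$ together with the fixed constant coming from the single block $q=-1$, so the weighted $\ell^r$ (resp. $\ell^\infty$) quasi-norm with exponent $\tilde{s}$ is dominated by the one with exponent $s$; for (3) the statement reduces to $\ell^r(\Z)\hookrightarrow \ell^{\tilde{r}}(\Z)$. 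For (4) I apply Bernstein's inequality (i) block by block: since $\widehat{\dot{\Delta}_q f}$ is supported in $\{|\xi|\lesssim 2^q\}$, one has $\Vert \dot{\Delta}_q f\Vert_{L^{\tilde{p}}}\lesssim 2^{qn(1/p-1/\tilde{p})}\Vert \dot{\Delta}_q f\Vert_{L^p}$, so multiplying by $2^{q(s-n(1/p-1/\tilde{p}))}$ and taking $\ell^r$ norms gives the embedding; the inhomogeneous version is identical, the low-frequency block being controlled by the same Bernstein bound. Property (5) then follows by combining (4) with $\tilde{p}=\infty$ to land in $\dot{B}_{\infty,1}^{0}$ (resp. $B_{\infty,1}^{0}$): the finiteness of $\sum_q \Vert \dot{\Delta}_q f\Vert_{L^\infty}$ forces the Littlewood--Paley series to converge uniformly, so $f$ is bounded and continuous, and since each $\dot{\Delta}_q f$ is the convolution of an $L^p$ function ($p<\infty$) with a Schwartz kernel it lies in $\mathcal{C}_0$, which is closed under uniform limits.

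For the lifting properties (6) and (7) the key remark is that on the support of $\varphi(2^{-q}\cdot)$ the symbol $i\xi_j$ factors as $2^q$ times $\big(i\,2^{-q}\xi_j\,\widetilde{\varphi}(2^{-q}\xi)\big)$ for a fixed Schwartz cutoff $\widetilde{\varphi}$ equal to $1$ near $\operatorname{supp}\varphi$; hence $\partial_j \dot{\Delta}_q f = 2^q\, g_q \ast \dot{\Delta}_q f$ with $g_q = 2^{nq} g(2^q\cdot)$ and $g\in \mathcal{S}$ independent of $q$, so Young's inequality gives $\Vert \nabla \dot{\Delta}_q f\Vert_{L^p}\lesssim 2^q \Vert \dot{\Delta}_q f\Vert_{L^p}$; the symmetric argument with the localized inverse symbol $1/(i\xi)$ yields $\Vert \dot{\Delta}_q f\Vert_{L^p}\lesssim 2^{-q}\Vert \nabla \dot{\Delta}_q f\Vert_{L^p}$, and multiplying by $2^{q(s-1)}$ and summing in $\ell^r$ over $q\in \Z$ gives \eqref{Eqv_Norms}. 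For (7) the same upper bound handles $q\geq 0$, while for $q=-1$ Bernstein's inequality (i) with $k=1$ and $a=b=p$ gives $\Vert \nabla \Delta_{-1} u\Vert_{L^p}\lesssim \Vert \Delta_{-1} u\Vert_{L^p}$; summing the weighted blocks over $q\geq -1$ then yields $\Vert \nabla u\Vert_{B_{p,r}^{s-1}}\lesssim \Vert u\Vert_{B_{p,r}^{s}}$ --- only this direction survives in the inhomogeneous case, since the inverse-symbol argument is not available on the low-frequency ball.

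The property I expect to require the most care is (1), as it is the only statement where the distinction between $\mathcal{S}^\prime$ and $\mathcal{S}_0^\prime = \mathcal{S}^\prime/\mathcal{P}$ is at stake. I would first check that for $s>0$ a distribution in $L^p\cap \dot{B}_{p,r}^{s}$ is realized as a genuine tempered distribution whose inhomogeneous blocks satisfy $\Delta_q f = \dot{\Delta}_q f$ for $q\geq 0$ and $\Delta_{-1} f = S_0 f$, so that $\Vert f\Vert_{B_{p,r}^{s}}$ splits as $\Vert \Delta_{-1} f\Vert_{L^p}$ plus the $q\geq 0$ tail of $\Vert f\Vert_{\dot{B}_{p,r}^{s}}$. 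From $f = \Delta_{-1} f + \sum_{q\geq 0}\dot{\Delta}_q f$ and the triangle inequality one gets $\Vert f\Vert_{L^p}\lesssim \Vert \Delta_{-1} f\Vert_{L^p} + \sum_{q\geq 0}\Vert \dot{\Delta}_q f\Vert_{L^p}$, and the tail is bounded by $\Vert f\Vert_{\dot{B}_{p,r}^{s}}$ after absorbing the summable weight $2^{-qs}$ via H\"older's inequality in $q$ --- which is exactly where $s>0$ enters; conversely $\Vert \Delta_{-1} f\Vert_{L^p}, \Vert \dot{\Delta}_q f\Vert_{L^p}\lesssim \Vert f\Vert_{L^p}$ by Young's inequality, and assembling these estimates gives the claimed norm equivalence. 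The main obstacle throughout is thus not any individual bound but keeping the homogeneous and inhomogeneous realizations and the low-frequency block mutually consistent; once that is in place, each property reduces to Bernstein's inequality together with the summation of a geometric weight.
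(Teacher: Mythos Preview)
Your sketch is correct and follows the standard route via Bernstein's inequality and elementary sequence-space arguments. Note, however, that the paper does not actually prove this lemma: it only states the properties and refers the reader to \cite[Lemma~2.2]{Kawashima_1} for (1)--(5) and \cite[Proposition~1.4]{Danchin_2001} for (6)--(7). Your proposal thus supplies more detail than the paper itself, and the arguments you outline are exactly the ones found in those references and in \cite{bahouri2011fourier}.
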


The Littlewood--Paley decomposition enables us to obtain bounds for each
dyadic block in spaces of the form $L^\rho_T(L^p)$. Therefore, to go from
this type of bounds to estimates in $L^\rho_T(\dot{B}_{p,r}^s)$ we need to
perform a summation in $\ell^r(\mathbb{Z})$. By doing this, we in
fact do not bound the $L^\rho_T(\dot{B}_{p,r}^s)$ norm, since the time
integration has been performed before the summation in $\ell^r(\mathbb{Z})$.
This requires the introduction of the following spaces known as the
Chemin--Lerner spaces.  See \cite{Chemin_Lerner_1995}. 

\begin{definition}
Let $ s\in \mathbb{R},\, 1\leq p,\, r,\,\theta\leq \infty$ and $I\subset 
\mathbb{R}$ be  an interval. Then the homogeneous mixed time-space
Chemin--Lerner space $\widetilde{L}^\theta_T(\dot{B} _{p,r}^{s})$ is defined by 
\begin{equation}
\widetilde{L}^\theta_T(\dot{B} _{p,r}^{s})=\left\{f\in L^\theta(0,T; \mathcal{S}%
^\prime): \Vert f\Vert_{\widetilde{L}^\theta_T(\dot{B} _{p,r}^{s})}<\infty\right\},
\end{equation}
where 
\begin{equation}
\Vert f\Vert_{\widetilde{L}^\theta_T({\dot{B}} _{p,r}^{s})}=\left\Vert 2^{sq}\left(\int_0^T
\Vert \Delta_q f(\tau)\Vert_p^\theta d\tau\right)^{1/\theta}\right\Vert_{\ell^r(%
\mathbb{Z})}
\end{equation}
with the usual modification if $\theta=\infty$. 
Notice here that the
integration in time is taken before the summation in $\ell^q(\mathbb{Z})$.

We also need the inhomogeneous mixed time-space $\widetilde{L}^\theta_T(B
_{p,q}^{s}), s>0$ whose norm is defined by 
\begin{equation}
\Vert f\Vert_{\widetilde{L}^\theta_T(B _{p,q}^{s})}=\Vert f\Vert_{L^\theta_T(L^p)}+\Vert
f\Vert_{\widetilde{L}^\theta_T(\dot{B} _{p,q}^{s})}.
\end{equation}

\end{definition}
We further define 
\begin{subequations}
\begin{equation}\label{C_1_T_Def}
\begin{aligned}
\widetilde{\mathcal{C}}_T (B_{p,q}^{s}):=&\,\widetilde{L}^\infty_T(B_{p,q}^{s})\cap \mathcal{C}([0,T]; B_{p,q}^{s})\\
\widetilde{\mathcal{C}}_T^1 (B_{p,q}^{s}):=&\,\{f\in \mathcal{C}^1 ([0,T]; B_{p,q}^{s}): \partial_t f\in\widetilde{L}^\infty_T(B_{p,q}^{s}) \},
\end{aligned}
\end{equation}
and for $T=+\infty$, we define 
\begin{equation}\label{C_1_infty_Def}
\begin{aligned}
\widetilde{\mathcal{C}} (B_{p,q}^{s}):=&\,\{f\in \mathcal{C} (\R+; B_{p,q}^{s}): \Vert f\Vert_{\widetilde{L}^\infty(B_{p,q}^{s})}<+\infty\}\\
\widetilde{\mathcal{C}}^1 (B_{p,q}^{s}):=&\,\{f\in \mathcal{C}^1 (\R_+; B_{p,q}^{s}): \Vert \partial_tf\Vert_{\widetilde{L}^\infty(B_{p,q}^{s})} \}
\end{aligned}
\end{equation}
where for notation simplicity,  the index $T$ will be omitted when $T=+\infty$.

\end{subequations} 
\begin{remark}
Using Minkowski's inequality, we may easily show  that the above spaces
can be linked to the usual space-time mixed spaces $L^\theta_T(\dot{B}
_{p,r}^{s}) $ through the embedding inequalities 
\begin{equation}  \label{Mink}
\left.
\begin{array}{ll}
\Vert f\Vert_{\widetilde{L}^\theta_T(\dot{B} _{p,r}^{s})}\leq \Vert f\Vert_{L^\theta_T(%
\dot{B} _{p,r}^{s})},\qquad \text{if}\quad r\geq \theta,
\vspace{0.3cm}\\
 \Vert f\Vert_{\widetilde{%
L}^\theta_T(\dot{B} _{p,r}^{s})}\geq \Vert f\Vert_{L^\theta_T(\dot{B}
_{p,r}^{s})},\qquad \text{if}\quad r\leq \theta.
\end{array}
\right. 
\end{equation}

%Moreover, if $r\geq p$, then we can pay some regularity and get for $%
%s^\prime<s$, 
%\begin{equation}
%\Vert f\Vert_{L^\theta_T(\dot{B} _{p,r}^{s^\prime})}\leq \Vert f\Vert_{\widetilde{L}%
%^\theta_T(\dot{B} _{p,r}^{s})}.
%\end{equation}

In addition the following product estimate holds  (see \cite[Lemma 2.4]{Kawashima_1}) 
\begin{equation}
\Vert fg\Vert_{\widetilde{L}^\theta_T(\dot{B} _{p,r}^{s})}\lesssim \Vert f\Vert_{%
L^{\theta_1}_T(L^\infty)}\Vert g\Vert_{\widetilde{L}^{\theta_2}_T(\dot{B}
_{p,r}^{s})}+\Vert g\Vert_{L^{\theta_3}_T(L^\infty)}\Vert f\Vert_{\widetilde{L%
}^{\theta_4}_T(\dot{B} _{p,r}^{s})}
\end{equation}
whenever $s>0,\, 1\leq p\leq \infty$, $1\leq \theta, \theta_1, \theta_2,\theta_3,\theta_4\leq \infty$
and 
\begin{equation}
\frac{1}{\theta}=\frac{1}{\theta_1}+\frac{1}{\theta_2}=\frac{1}{\theta_3}+\frac{1}{\theta_4}.
\end{equation}
As a direct corollary, one has 
\begin{equation}
\Vert fg\Vert_{\widetilde{L}^\theta_T(\dot{B} _{p,r}^{s})}\lesssim \Vert g\Vert_{%
\widetilde{L}^{\theta_1}_T(\dot{B} _{p,r}^{s})}\Vert f\Vert_{\widetilde{L}^{\theta_2}_T(\dot{B}
_{p,r}^{s})}
\end{equation}
whenever $sp\geq n$ and $1/\theta=1/\theta_1+1/\theta_2$.
\end{remark}

\subsection{Useful  inequalities } 
Now, we introduce some useful inequalities such as the Moser-type product estimates, commutator estimates and the   Gagliardo--Nirenberg type inequalities and some other useful inequalities. 

 We start with the following characterization of the homogeneous Besov spaces. (See \cite[Corollary 1.3.2]{G_Gui_2013})
\begin{lemma}
Let  $1\leq p,r\leq \infty$ and $u\in \mathcal{S}^\prime(\R^n)$. Then $u$ belongs to $\dot{B}_{p,r}^s(\R^n)$ if and only if there exists a sequence $(c_q)_{q\in \Z}$ such that $c_q> 0$,  $\Vert c_q\Vert_{\ell^r}=1$ and  
\begin{equation}\label{Besov_Dya_Ineq}
\Vert \dot{\Delta}_q u\Vert_{L^p}\lesssim  c_q2^{-sq}\Vert u\Vert_{\dot{B}_{p,r}^s}. 
\end{equation}
\end{lemma}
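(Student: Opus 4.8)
The plan is to observe that this characterization is nothing more than a normalized restatement of the definition of the homogeneous Besov norm, $\Vert u\Vert_{\dot{B}_{p,r}^s}=\big\Vert(2^{qs}\Vert\dot{\Delta}_q u\Vert_{L^p})_{q\in\Z}\big\Vert_{\ell^r(\Z)}$, so the entire content is a bookkeeping argument. I would set $a_q:=2^{qs}\Vert\dot{\Delta}_q u\Vert_{L^p}$, prove the two implications separately, and treat the cases $r<\infty$ and $r=\infty$ uniformly by reading $\ell^r$-sums as suprema when $r=\infty$. The only point that genuinely needs attention is the requirement $c_q>0$ (and not merely $c_q\geq 0$), which forces me to add a small strictly positive perturbation when producing the sequence.

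For the forward implication, assume $u\in\dot{B}_{p,r}^s$. If $u=0$ the assertion is trivial, since \eqref{Besov_Dya_Ineq} then reads $0\leq 0$ for any positive $\ell^r$-unit sequence; so assume $\Vert u\Vert_{\dot{B}_{p,r}^s}>0$. Fix once and for all a strictly positive sequence $(b_q)_{q\in\Z}$ with $\Vert(b_q)\Vert_{\ell^r}\leq 1$, for instance $b_q=2^{-|q|}/\Vert(2^{-|q|})\Vert_{\ell^r}$ if $r<\infty$ and $b_q=2^{-|q|}$ if $r=\infty$. Put $\tilde c_q:=a_q+\Vert u\Vert_{\dot{B}_{p,r}^s}\,b_q$ and $c_q:=\tilde c_q/\Vert(\tilde c_q)\Vert_{\ell^r}$, which is well defined and finite since $\Vert(a_q)\Vert_{\ell^r}=\Vert u\Vert_{\dot{B}_{p,r}^s}<\infty$. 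Then $c_q>0$ and $\Vert(c_q)\Vert_{\ell^r}=1$ by construction, while by the triangle inequality in $\ell^r$,
\[
a_q\leq \tilde c_q = c_q\,\Vert(\tilde c_q)\Vert_{\ell^r}\leq c_q\big(\Vert(a_q)\Vert_{\ell^r}+\Vert u\Vert_{\dot{B}_{p,r}^s}\Vert(b_q)\Vert_{\ell^r}\big)\leq 2\,c_q\,\Vert u\Vert_{\dot{B}_{p,r}^s}.
\]
Dividing by $2^{qs}$ gives $\Vert\dot{\Delta}_q u\Vert_{L^p}\leq 2\,c_q\,2^{-qs}\Vert u\Vert_{\dot{B}_{p,r}^s}$, which is exactly \eqref{Besov_Dya_Ineq}.

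For the converse I would argue that the estimate \eqref{Besov_Dya_Ineq} should be read, in this direction, as: there exist a sequence $(c_q)_{q\in\Z}$ with $c_q>0$, $\Vert(c_q)\Vert_{\ell^r}=1$, and a finite constant $M>0$ such that $\Vert\dot{\Delta}_q u\Vert_{L^p}\leq M\,c_q\,2^{-qs}$ for all $q$. Multiplying by $2^{qs}$ and taking the $\ell^r(\Z)$-norm yields
\[
\Vert u\Vert_{\dot{B}_{p,r}^s}=\big\Vert\big(2^{qs}\Vert\dot{\Delta}_q u\Vert_{L^p}\big)_{q\in\Z}\big\Vert_{\ell^r}\leq M\,\Vert(c_q)\Vert_{\ell^r}=M<\infty ,
\]
so that $u\in\dot{B}_{p,r}^s$ (using that $u\in\mathcal{S}^\prime$, modulo polynomials, and that finiteness of this norm is precisely membership in the space) with $\Vert u\Vert_{\dot{B}_{p,r}^s}\leq M$. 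I do not expect a real obstacle here: the statement is essentially a tautology once the definition is unwound, and the only subtlety — strict positivity of $(c_q)$ — is absorbed by the fixed auxiliary sequence $(b_q)$, with the case $r=\infty$ handled by the same lines after replacing sums by suprema.
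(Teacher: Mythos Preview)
Your proposal is correct. The paper does not supply its own proof of this lemma; it simply cites \cite[Corollary~1.3.2]{G_Gui_2013} and states the result. Your argument correctly unwinds the definition of the $\dot{B}_{p,r}^s$ norm, and your handling of the strict positivity requirement $c_q>0$ via the auxiliary sequence $(b_q)$ is the standard way to deal with that minor wrinkle.
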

 Next, we recall the following lemma. (see \cite[Proposition 2.1]{Kawashima_1}) 
\begin{lemma}[Product estimate]
Let $s>0$ and $1\leq p,r\leq \infty$. Let $f$ and $g$ be in $L^\infty\cap \dot{B}^s_{p,r}$. Then, it holds that 
\begin{equation}\label{Prod_Estima_Hom}
\Vert fg\Vert_{\dot{B}^s_{p,r}}\lesssim \Vert f\Vert_{L^\infty}\Vert g\Vert_{\dot{B}^s_{p,r}}+\Vert g\Vert_{L^\infty}\Vert f\Vert_{\dot{B}^s_{p,r}},
\end{equation}
where the hidden constant is depending only on $n,\, p$ and $s$. 

\noindent Since $\dot{B}^{n/p}_{p,1}$ is embedded in $L^\infty$, then it is an algebra and it holds that 
\begin{equation}
\Vert fg\Vert_{\dot{B}^{n/p}_{p,1}}\lesssim \Vert f\Vert_{\dot{B}^{n/p}_{p,1}}\Vert g\Vert_{\dot{B}^{n/p}_{p,1}}.
\end{equation}
\end{lemma}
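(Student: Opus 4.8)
The plan is to establish the product estimate \eqref{Prod_Estima_Hom} by means of Bony's homogeneous paraproduct decomposition. Write $fg=\dot{T}_f g+\dot{T}_g f+\dot{R}(f,g)$, where $\dot{T}_f g:=\sum_{q\in\Z}S_{q-1}f\,\dot\Delta_q g$ is the paraproduct and $\dot{R}(f,g):=\sum_{q\in\Z}\dot\Delta_q f\,\widetilde{\dot\Delta}_q g$, with $\widetilde{\dot\Delta}_q:=\dot\Delta_{q-1}+\dot\Delta_q+\dot\Delta_{q+1}$, is the remainder; since $f,g\in L^\infty\subset\mathcal S'$, all three series converge in $\mathcal S_0'$ and the identity holds modulo polynomials. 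First I would record two elementary uniform bounds used throughout: $\|S_{q-1}f\|_{L^\infty}\le\|\widetilde h\|_{L^1}\|f\|_{L^\infty}$, because the convolution kernel of $S_{q-1}$ has $L^1$-norm independent of $q$, and $\|\dot\Delta_q f\|_{L^\infty}\lesssim\|f\|_{L^\infty}$ uniformly in $q$ by the Bernstein inequality (Lemma~\ref{Lemma_Bernstein}).

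For the paraproduct, each block $S_{q-1}f\,\dot\Delta_q g$ has Fourier support in an annulus $\{|\xi|\sim 2^q\}$, so $\dot\Delta_j(S_{q-1}f\,\dot\Delta_q g)=0$ unless $|j-q|\le N_0$ for a fixed integer $N_0$. Hence $\|\dot\Delta_j\dot{T}_f g\|_{L^p}\lesssim\|f\|_{L^\infty}\sum_{|j-q|\le N_0}\|\dot\Delta_q g\|_{L^p}$; multiplying by $2^{js}$, bounding $2^{js}\lesssim 2^{qs}$ on the range $|j-q|\le N_0$, and taking the $\ell^r(\Z)$-norm in $j$ (a finite sum of shifts of the sequence $(2^{qs}\|\dot\Delta_q g\|_{L^p})_q$) gives $\|\dot{T}_f g\|_{\dot B^s_{p,r}}\lesssim\|f\|_{L^\infty}\|g\|_{\dot B^s_{p,r}}$. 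The symmetric argument yields $\|\dot{T}_g f\|_{\dot B^s_{p,r}}\lesssim\|g\|_{L^\infty}\|f\|_{\dot B^s_{p,r}}$.

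For the remainder, the block $\dot\Delta_q f\,\widetilde{\dot\Delta}_q g$ has Fourier support in a \emph{ball} $\{|\xi|\lesssim 2^q\}$, so $\dot\Delta_j\big(\dot\Delta_q f\,\widetilde{\dot\Delta}_q g\big)$ vanishes unless $q\ge j-N_1$ for a fixed $N_1$. Therefore $2^{js}\|\dot\Delta_j\dot{R}(f,g)\|_{L^p}\lesssim\sum_{q\ge j-N_1}2^{(j-q)s}\,\|\dot\Delta_q f\|_{L^\infty}\,\big(2^{qs}\|\widetilde{\dot\Delta}_q g\|_{L^p}\big)\lesssim\|f\|_{L^\infty}\sum_{q\ge j-N_1}2^{(j-q)s}\,d_q$, where $d_q:=2^{qs}\|\widetilde{\dot\Delta}_q g\|_{L^p}$ defines a sequence with $\|(d_q)\|_{\ell^r}\lesssim\|g\|_{\dot B^s_{p,r}}$. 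Here the hypothesis $s>0$ is decisive: it makes $(2^{-ks})_{k\ge -N_1}$ an $\ell^1(\Z)$ sequence, so the last bound is a discrete convolution of $(d_q)$ against an $\ell^1$ weight, and Young's inequality for sequences gives $\|\dot{R}(f,g)\|_{\dot B^s_{p,r}}\lesssim\|f\|_{L^\infty}\|g\|_{\dot B^s_{p,r}}$ (and, by symmetry of the same argument, also $\lesssim\|g\|_{L^\infty}\|f\|_{\dot B^s_{p,r}}$). Adding the three contributions yields \eqref{Prod_Estima_Hom}. The algebra statement then follows at once: taking $s=n/p$ and $r=1$, the embedding $\dot B^{n/p}_{p,1}\hookrightarrow L^\infty$ recorded in the preceding lemma lets us replace $\|f\|_{L^\infty}$ and $\|g\|_{L^\infty}$ by $\|f\|_{\dot B^{n/p}_{p,1}}$ and $\|g\|_{\dot B^{n/p}_{p,1}}$ in \eqref{Prod_Estima_Hom}.

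I expect the main obstacle to be the remainder term: one must notice that its dyadic pieces are spectrally supported in balls rather than annuli, which turns the frequency constraint into the one-sided relation $q\gtrsim j$, and it is exactly this one-sidedness that forces the use of $s>0$ to sum the resulting low-frequency tail. A secondary technical point worth stating carefully is that $f,g\in L^\infty$ already suffices for $fg$ to be well defined modulo polynomials and for the paraproduct and remainder series to converge in $\mathcal S_0'$; this is classical but not entirely automatic for homogeneous spaces.
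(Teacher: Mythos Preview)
Your proof is correct and is precisely the standard paraproduct argument for this Moser-type product estimate. Note that the paper does not actually prove this lemma: it is stated in the appendix with the attribution ``(see \cite[Proposition 2.1]{Kawashima_1})'' and no proof is given. The argument you have written---Bony's decomposition $fg=\dot T_f g+\dot T_g f+\dot R(f,g)$, annular spectral localization for the paraproducts, ball localization and the $s>0$ summability for the remainder---is exactly the classical proof one finds in the cited reference or in Bahouri--Chemin--Danchin \cite{bahouri2011fourier}, so there is nothing to compare against and nothing to correct.
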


\begin{lemma}[\cite{Danchin_2007}]\label{Lemma_Danchin}
The following inequality holds true:
\begin{equation}
\Vert fg\Vert_{\dot{B}^{s}_{2,1}}\lesssim \Vert f\Vert_{\dot{B}^{n/2}_{2,1}}\Vert g\Vert_{\dot{B}^{s}_{2,1}}
\end{equation}
whenever $s\in (-n/2, n/2]$. 
\end{lemma}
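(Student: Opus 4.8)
The plan is to run Bony's paraproduct calculus. Writing the homogeneous paraproduct and remainder operators as $T_ab=\sum_{j\in\Z}S_{j-1}a\,\dot{\Delta}_jb$ and $R(a,b)=\sum_{j\in\Z}\sum_{|j-k|\le1}\dot{\Delta}_ja\,\dot{\Delta}_kb$, one has the decomposition $fg=T_fg+T_gf+R(f,g)$, so it suffices to bound each of the three pieces in $\dot{B}^s_{2,1}$ by $\Vert f\Vert_{\dot{B}^{n/2}_{2,1}}\Vert g\Vert_{\dot{B}^s_{2,1}}$. The only inputs will be the Bernstein inequalities of Lemma~\ref{Lemma_Bernstein}, the dyadic characterization \eqref{Besov_Dya_Ineq} — which furnishes nonnegative sequences $(c_q),(d_q)$ with $\Vert c_q\Vert_{\ell^1},\Vert d_q\Vert_{\ell^1}\le1$ and $\Vert\dot{\Delta}_qf\Vert_{L^2}\le c_q2^{-qn/2}\Vert f\Vert_{\dot{B}^{n/2}_{2,1}}$, $\Vert\dot{\Delta}_qg\Vert_{L^2}\le d_q2^{-qs}\Vert g\Vert_{\dot{B}^s_{2,1}}$ — and the embedding $\dot{B}^{n/2}_{2,1}\hookrightarrow L^\infty$.

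For $T_fg$: the $j$-th summand $S_{j-1}f\,\dot{\Delta}_jg$ is spectrally supported in an annulus $|\xi|\sim2^j$, so $\dot{\Delta}_k(T_fg)$ receives contributions only from finitely many $j$ with $|j-k|\le N_0$; using $\Vert S_{j-1}f\Vert_{L^\infty}\lesssim\Vert f\Vert_{L^\infty}\lesssim\Vert f\Vert_{\dot{B}^{n/2}_{2,1}}$ and summing the resulting $\ell^1$ series gives $\Vert T_fg\Vert_{\dot{B}^s_{2,1}}\lesssim\Vert f\Vert_{\dot{B}^{n/2}_{2,1}}\Vert g\Vert_{\dot{B}^s_{2,1}}$, valid for every $s\in\R$. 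For $T_gf$ — the term in which the constraint $s\le n/2$ is used — the summand is again supported in $|\xi|\sim2^j$ and I estimate $\Vert S_{j-1}g\,\dot{\Delta}_jf\Vert_{L^2}\le\Vert S_{j-1}g\Vert_{L^\infty}\Vert\dot{\Delta}_jf\Vert_{L^2}$. When $s<n/2$, Bernstein gives $\Vert S_{j-1}g\Vert_{L^\infty}\le\sum_{k\le j-2}\Vert\dot{\Delta}_kg\Vert_{L^\infty}\lesssim\sum_{k\le j-2}2^{kn/2}\Vert\dot{\Delta}_kg\Vert_{L^2}\lesssim2^{j(n/2-s)}\Vert g\Vert_{\dot{B}^s_{2,1}}$, the last bound being valid because $n/2-s>0$; hence $2^{js}\Vert S_{j-1}g\,\dot{\Delta}_jf\Vert_{L^2}\lesssim c_j\Vert f\Vert_{\dot{B}^{n/2}_{2,1}}\Vert g\Vert_{\dot{B}^s_{2,1}}$, and summation in $j$ closes this piece. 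When $s=n/2$ one instead uses $\Vert S_{j-1}g\Vert_{L^\infty}\lesssim\Vert g\Vert_{L^\infty}\lesssim\Vert g\Vert_{\dot{B}^{n/2}_{2,1}}$ and repeats the $T_fg$ argument.

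For the remainder $R(f,g)$ — where the lower bound $s>-n/2$ enters — the product $\dot{\Delta}_jf\,\dot{\Delta}_kg$ with $|j-k|\le1$ is spectrally supported in a ball $|\xi|\lesssim2^j$, so $\dot{\Delta}_mR(f,g)$ only involves $j\ge m-N_1$. Applying the $L^1\to L^2$ Bernstein inequality and then Cauchy--Schwarz, $\Vert\dot{\Delta}_mR(f,g)\Vert_{L^2}\lesssim2^{mn/2}\sum_{j\ge m-N_1}\Vert\dot{\Delta}_jf\Vert_{L^2}\Vert\dot{\Delta}_jg\Vert_{L^2}\lesssim2^{mn/2}\Vert f\Vert_{\dot{B}^{n/2}_{2,1}}\Vert g\Vert_{\dot{B}^s_{2,1}}\sum_{j\ge m-N_1}a_j2^{-j(n/2+s)}$, where $a_j:=c_jd_j$ satisfies $\Vert a_j\Vert_{\ell^1}\le1$. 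Multiplying by $2^{ms}$, summing in $m$, exchanging the order of summation and using the geometric estimate $\sum_{m\le j+N_1}2^{m(n/2+s)}\lesssim2^{j(n/2+s)}$ — which converges exactly because $n/2+s>0$ — the factors $2^{\pm j(n/2+s)}$ cancel and one is left with $\sum_ja_j\lesssim1$, so $\Vert R(f,g)\Vert_{\dot{B}^s_{2,1}}\lesssim\Vert f\Vert_{\dot{B}^{n/2}_{2,1}}\Vert g\Vert_{\dot{B}^s_{2,1}}$. Adding the three bounds yields the claim. I expect the only genuinely delicate points to be bookkeeping: tracking that $T_gf$ consumes the restriction $s\le n/2$ while $R(f,g)$ consumes $s>-n/2$, and checking that the homogeneous Bony series converge in $\mathcal{S}_0^\prime$ so that $fg$ is well defined modulo polynomials — both standard, cf.~\cite{bahouri2011fourier,Danchin_2007}.
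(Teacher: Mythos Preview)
Your argument is correct: the Bony decomposition $fg=T_fg+T_gf+R(f,g)$ together with Bernstein and the embedding $\dot{B}^{n/2}_{2,1}\hookrightarrow L^\infty$ handles each piece, with $T_gf$ using $s\le n/2$ and $R(f,g)$ using $s>-n/2$, exactly as you track. There is nothing to compare against here, because the paper does not supply its own proof of this lemma --- it is merely quoted from \cite{Danchin_2007} (and the same argument appears in \cite{bahouri2011fourier}); your paraproduct computation is precisely the standard proof one finds in those references.
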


Now, we state some  helpful commutator estimates (see \cite[Proposition 2.3]{Mori_Xu_Kawashima_2}).
\begin{lemma}[Commutator estimates]
Let $1<p<\infty$, $1\leq \theta\leq \infty$ and $s\in (-\frac{n}{p}-1,\frac{n}{p}]$. Then there exists a generic constant $C$ depending only on $s$ and $n$ such that 
\begin{equation}\label{Commu_A_1}
\big\Vert [f,\dot{\Delta}_q]g\big\Vert_{L^p}\leq Cc_q 2^{-q(s+1)}\Vert f\Vert_{\dot{B}^{n/p+1}_{p,1}}\Vert g\Vert_{\dot{B}^{s}_{p,1}}
\end{equation}
and 
\begin{equation}\label{Commutator_2}
\big\Vert [f,\dot{\Delta}_q]g\big\Vert_{L_T^\theta L^p}\leq Cc_q 2^{-q(s+1)}\Vert f\Vert_{\widetilde{L}_T^{\theta_1}(\dot{B}^{n/p+1}_{p,1})}\Vert g\Vert_{\widetilde{L}_T^{\theta_2}(\dot{B}^{s}_{p,1})}
\end{equation}  
with $1/\theta=1/\theta_1+1/\theta_2$ and $(c_q)_{q\in \Z}$ denotes a positive sequence such that $\Vert c_q\Vert_{\ell^1}\leq1. $
\end{lemma}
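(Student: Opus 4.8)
The plan is to prove \eqref{Commu_A_1} first and then obtain \eqref{Commutator_2} by repeating the argument with H\"older's inequality in time. Since $\dot\Delta_q h(x)=2^{nq}\int_{\R^n}\check\varphi\big(2^q(x-y)\big)h(y)\,dy$ with $\check\varphi=\mathcal F^{-1}\varphi$, one has the pointwise identity
\[
[f,\dot\Delta_q]g(x)=2^{nq}\int_{\R^n}\check\varphi\big(2^q(x-y)\big)\big(f(x)-f(y)\big)g(y)\,dy ,
\]
equivalently $[f,\dot\Delta_q]g$ is the bilinear Fourier multiplier with symbol $\varphi(2^{-q}\eta)-\varphi(2^{-q}\xi)$, which vanishes on the diagonal $\xi=\eta$. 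Next I would insert Bony's paraproduct decomposition $ab=\dot T_ab+\dot T_ba+\dot R(a,b)$ into both $fg$ and $f\,\dot\Delta_qg$, which gives
\[
[\dot\Delta_q,f]g=[\dot\Delta_q,\dot T_f]g+\dot\Delta_q\dot T_gf+\dot\Delta_q\dot R(f,g)-\dot T_{\dot\Delta_qg}f-\dot R(f,\dot\Delta_qg).
\]

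The term $[\dot\Delta_q,\dot T_f]g=\sum_{|j-q|\le N_0}[\dot\Delta_q,\dot S_{j-1}f]\dot\Delta_jg$ is a finite sum of genuine commutators; using the kernel representation above and a first-order Taylor expansion of $\dot S_{j-1}f$, the integrability of $z\mapsto|z|\,|\check\varphi(z)|$ and Young's inequality give $\|[\dot\Delta_q,\dot S_{j-1}f]\dot\Delta_jg\|_{L^p}\lesssim 2^{-q}\|\nabla\dot S_{j-1}f\|_{L^\infty}\|\dot\Delta_jg\|_{L^p}$, while Bernstein's inequality (Lemma~\ref{Lemma_Bernstein}) and \eqref{Equiv_Besov_ell} bound $\|\nabla\dot S_{j-1}f\|_{L^\infty}$ by $\|f\|_{\dot B^{n/p+1}_{p,1}}$; combined with $\|\dot\Delta_jg\|_{L^p}\le c_j2^{-js}\|g\|_{\dot B^s_{p,1}}$ from \eqref{Besov_Dya_Ineq} this produces $c_q2^{-q(s+1)}\|f\|_{\dot B^{n/p+1}_{p,1}}\|g\|_{\dot B^s_{p,1}}$. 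For $\dot\Delta_q\dot T_gf$ the spectral supports force the relevant blocks to $j\sim q$; estimating $\|\dot S_{j-1}g\|_{L^\infty}\lesssim\sum_{j'<j}2^{j'n/p}\|\dot\Delta_{j'}g\|_{L^p}$ by Bernstein, the geometric series in $j'$ converges exactly because $s\le n/p$ and one again obtains $c_q2^{-q(s+1)}$ times the two norms; the pieces $\dot T_{\dot\Delta_qg}f$ and $\dot R(f,\dot\Delta_qg)$ reduce to a geometric, respectively finite, sum of blocks near scale $2^q$ and are treated the same way, with no further restriction on $s$. The genuinely delicate term is $\dot\Delta_q\dot R(f,g)=\sum_{j\ge q-N_0}\dot\Delta_q\big(\dot\Delta_jf\,\widetilde{\dot\Delta}_jg\big)$: here $\dot\Delta_jf\,\widetilde{\dot\Delta}_jg$ is spectrally supported in a ball of radius $\sim 2^j$, so I would estimate it in $L^{p/2}$ (when $p\ge 2$, via $\|\dot\Delta_jf\,\widetilde{\dot\Delta}_jg\|_{L^{p/2}}\le\|\dot\Delta_jf\|_{L^p}\|\widetilde{\dot\Delta}_jg\|_{L^p}$) or in $L^1$ (when $1<p\le 2$), and recover its $\dot\Delta_q$-block in $L^p$ by Bernstein, which costs a positive power of $2^{q}$; after inserting the $\dot B^{n/p+1}_{p,1}$- and $\dot B^s_{p,1}$-weights this turns the a priori divergent-looking series $\sum_{j\ge q-N_0}(\cdots)$ into a convergent one precisely when $\tfrac np+1+s>0$, i.e. $s>-\tfrac np-1$, leaving the factor $2^{-q(s+1)}$ together with an $\ell^1$-summable prefactor. (This is the ``$\dot R:\dot B^{\sigma_1}_{p,1}\times\dot B^{\sigma_2}_{p,1}\to\dot B^{\sigma_1+\sigma_2-n/p}_{p,1}$ for $\sigma_1+\sigma_2>0$'' mechanism, with $\sigma_1=\tfrac np+1$ and $\sigma_2=s$.)

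Summing the five contributions against the $\ell^1$-sequences yields \eqref{Commu_A_1}, the hypothesis $1<p<\infty$ being used only for the $L^p$- (and, when $p\ge 2$, $L^{p/2}$-) boundedness of the Fourier multipliers that appear. For \eqref{Commutator_2} I would run the identical decomposition, estimating every bilinear product in $L^\theta_T(L^p)$ by H\"older's inequality in time with $\tfrac1\theta=\tfrac1{\theta_1}+\tfrac1{\theta_2}$, sending the ``$f$-factor'' to $\widetilde L^{\theta_1}_T(\dot B^{n/p+1}_{p,1})$ and the ``$g$-factor'' to $\widetilde L^{\theta_2}_T(\dot B^s_{p,1})$, in the spirit of the Chemin--Lerner product estimate recalled above; since in the $\widetilde L$-norms the time integration precedes the $\ell^1$-summation, the dyadic bookkeeping is unchanged. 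The step I expect to be the main obstacle is the remainder term $\dot\Delta_q\dot R(f,g)$: unlike the commutator pieces it carries no Taylor-type smallness, so extracting the gain $2^{-q(s+1)}$ with an $\ell^1$-summable prefactor over the full range $-\tfrac np-1<s\le\tfrac np$ forces one to exploit the spectral-localization gain of $\dot\Delta_q$ through Bernstein's inequality and to keep careful track of the convolution structure of the sequences $(c_q)_{q\in\Z}$; everything else is routine Littlewood--Paley computation.
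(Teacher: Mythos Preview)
The paper does not give its own proof of this lemma: it is stated in the appendix with the attribution ``(see \cite[Proposition 2.3]{Mori_Xu_Kawashima_2})'' and no argument is supplied. Your proposal via Bony's paraproduct decomposition, the first-order Taylor/kernel argument for $[\dot\Delta_q,\dot S_{j-1}f]\dot\Delta_jg$, and Bernstein for the remainder $\dot R(f,g)$ is the standard route (essentially the one in \cite{bahouri2011fourier} and in the cited reference), and your identification of where the constraints $s\le n/p$ and $s>-n/p-1$ enter is correct. So there is nothing to compare against in the paper itself; your sketch is sound and aligned with the literature the paper defers to.
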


\begin{lemma}
\label{Embedding_Lemma} Suppose that $\varrho>0$ and $1\leq p<2$. It holds
that 
\begin{equation}
\Vert f\Vert_{\dot{B}^{-\varrho}_{r,\infty}}\lesssim\Vert f\Vert_{L^p}
\end{equation}
with $1/p-1/r=\varrho/n$. In particular this holds with $\varrho=n/2,\, r=2$
and $p=1$.
\end{lemma}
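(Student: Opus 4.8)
The plan is to reduce the embedding to a frequency-by-frequency estimate, combining the uniform $L^p$-boundedness of the Littlewood--Paley projections with Bernstein's inequality used to trade integrability for a power of the dyadic frequency. Note first that, since $1\le p<2$ and $\varrho>0$ with $\tfrac1p-\tfrac1r=\tfrac{\varrho}{n}$, one automatically has $p\le r$, so Bernstein will apply in the admissible direction; also, since $1\le p<\infty$, the space $L^p(\R^n)$ embeds into $\mathcal{S}_0^\prime$, so $f$ genuinely belongs to the homogeneous framework and $\Vert f\Vert_{\dot{B}^{-\varrho}_{r,\infty}}=\sup_{q\in\Z}2^{-q\varrho}\Vert\dot{\Delta}_q f\Vert_{L^r}$ is well defined.

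The first step is to record that $\dot{\Delta}_q f=2^{nq}h(2^q\cdot)*f$ with $h=\mathcal{F}^{-1}\varphi\in\mathcal{S}(\R^n)\subset L^1(\R^n)$, so Young's convolution inequality gives $\Vert\dot{\Delta}_q f\Vert_{L^p}\le\Vert h\Vert_{L^1}\Vert f\Vert_{L^p}$ for every $q\in\Z$, with a constant independent of $q$. The second step is to invoke Bernstein's inequality (Lemma \ref{Lemma_Bernstein}(i) with $k=0$ and the pair of exponents $p\le r$): since $\widehat{\dot{\Delta}_q f}$ is supported in the annulus $\{|\xi|\sim 2^q\}$, hence in a ball of radius $\lesssim 2^q$, one gets $\Vert\dot{\Delta}_q f\Vert_{L^r}\lesssim 2^{qn(1/p-1/r)}\Vert\dot{\Delta}_q f\Vert_{L^p}=2^{q\varrho}\Vert\dot{\Delta}_q f\Vert_{L^p}$, where we used $n(\tfrac1p-\tfrac1r)=\varrho$.

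Combining the two steps, $2^{-q\varrho}\Vert\dot{\Delta}_q f\Vert_{L^r}\lesssim\Vert\dot{\Delta}_q f\Vert_{L^p}\lesssim\Vert f\Vert_{L^p}$, uniformly in $q\in\Z$; taking the supremum over $q$ yields $\Vert f\Vert_{\dot{B}^{-\varrho}_{r,\infty}}\lesssim\Vert f\Vert_{L^p}$, which is the claim. The particular case then follows by choosing $\varrho=n/2$, $r=2$ and $p=1$, which indeed satisfies $\tfrac1p-\tfrac1r=\tfrac12=\tfrac{\varrho}{n}$ and $1\le p<2$.

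There is essentially no serious obstacle here; the only point that needs care is the bookkeeping. One must check that Bernstein is applied with the exponents in the admissible order — this is exactly where the hypothesis $\varrho>0$ (equivalently $p\le r$) is used — and that the frequency gain $2^{q\varrho}$ produced by Bernstein cancels precisely against the weight $2^{-q\varrho}$ defining the $\dot{B}^{-\varrho}_{r,\infty}$ norm, so that the resulting bound is uniform over all $q\in\Z$, including the low frequencies $q\to-\infty$ where a mismatch of exponents would be fatal. The uniform $L^p\to L^p$ bound for $\dot{\Delta}_q$ is classical and follows from Young's inequality as indicated above.
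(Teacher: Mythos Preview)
Your argument is correct and is the standard proof of this embedding: Young's inequality gives the uniform $L^p\to L^p$ bound for $\dot\Delta_q$, Bernstein trades $L^p$ for $L^r$ at the cost of exactly $2^{q\varrho}$, and this cancels the weight in the $\dot B^{-\varrho}_{r,\infty}$ norm. The paper itself states this lemma in the appendix as a known analytic tool and gives no proof, so there is nothing to compare against; your proof is precisely the one found in the references cited there (e.g.\ Bahouri--Chemin--Danchin).
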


We also have the following interpolation inequality 
see for instance \cite[Lemma 8.2]{Xu_Kawashima_2015}.   
\begin{lemma}
Suppose that $k\geq 0$ and $m,\, \varrho>0$. Then the following inequality holds 
\begin{equation}\label{Interpolation_I}
\Vert \Lambda^k f\Vert_{L^2}\lesssim \Vert \Lambda^{k+m}f\Vert_{L^2}^{\theta} \Vert f\Vert_{\dot{B}_{2,\infty}^{-\varrho}}^{1-\theta} \quad \text{with}\quad \theta=\frac{\varrho+k}{\varrho+k+m}. 
\end{equation}
%where \eqref{Interpolation_I} is also true for $\partial^\alpha$ with $|\alpha|=k$ ($k$ nonnegative integer). 
\end{lemma}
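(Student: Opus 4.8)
The plan is to run the standard Littlewood--Paley frequency-splitting argument. We may assume that $A:=\Vert f\Vert_{\dot{B}^{-\varrho}_{2,\infty}}$ and $B:=\Vert\Lambda^{k+m}f\Vert_{L^2}$ are both finite and strictly positive, since if either vanishes then $f$ reduces to a polynomial and the inequality is trivial. The idea is to cut the frequencies at a dyadic level $2^{j}$, control the low-frequency part by $A$ and the high-frequency part by $B$, and then optimise over the free integer $j$.

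First I would use the identity $\Vert\Lambda^{k} g\Vert_{L^2}^2\approx\sum_{q\in\Z}2^{2qk}\Vert\dot{\Delta}_q g\Vert_{L^2}^2$ (that is, $\dot{H}^{k}=\dot{B}^{k}_{2,2}$ with equivalent norms, which follows from Plancherel's theorem together with the almost-orthogonality of the blocks $\dot{\Delta}_q$). Fixing $j\in\Z$ and splitting the sum at $q=j$, I would estimate the low part using $\Vert\dot{\Delta}_q f\Vert_{L^2}\le 2^{q\varrho}A$ (the very definition of the $\dot{B}^{-\varrho}_{2,\infty}$ norm): since $k+\varrho>0$, the geometric series gives
\[
\sum_{q<j}2^{2qk}\Vert\dot{\Delta}_q f\Vert_{L^2}^2\lesssim \sum_{q<j}2^{2q(k+\varrho)}A^2\lesssim 2^{2j(k+\varrho)}A^2 .
\]
For the high part I would write $2^{2qk}=2^{-2qm}\,2^{2q(k+m)}\le 2^{-2jm}\,2^{2q(k+m)}$ for $q\ge j$, so that $\sum_{q\ge j}2^{2qk}\Vert\dot{\Delta}_q f\Vert_{L^2}^2\le 2^{-2jm}\sum_{q\in\Z}2^{2q(k+m)}\Vert\dot{\Delta}_q f\Vert_{L^2}^2\lesssim 2^{-2jm}B^2$. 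Altogether this yields $\Vert\Lambda^{k} f\Vert_{L^2}^2\lesssim 2^{2j(k+\varrho)}A^2+2^{-2jm}B^2$ for every $j\in\Z$.

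It then remains to choose $j$. The two terms on the right balance when $2^{2j(k+\varrho+m)}=B^2/A^2$; since $k+\varrho+m>0$ I would take $j$ to be the integer part of $(k+\varrho+m)^{-1}\log_2(B/A)$, so that $2^{2j(k+\varrho+m)}\approx B^2/A^2$ up to a constant depending only on $k,m,\varrho$. Substituting, both terms become $\lesssim (B^2/A^2)^{-m/(k+\varrho+m)}B^2=A^{2m/(k+\varrho+m)}B^{2(k+\varrho)/(k+\varrho+m)}$, and taking square roots gives $\Vert\Lambda^{k} f\Vert_{L^2}\lesssim B^{\theta}A^{1-\theta}$ with $\theta=\tfrac{\varrho+k}{\varrho+k+m}$, which is exactly \eqref{Interpolation_I}. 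The argument has no genuine obstacle: the only mildly delicate points are the norm equivalence in the first step and the fact that the optimal scale $j$ can only be realised up to integer rounding, which costs a harmless multiplicative constant and leaves the exponents untouched. Alternatively one can bypass the dyadic decomposition entirely and split the Fourier integral $\int_{\R^n}|\xi|^{2k}|\hat f(\xi)|^2\,\textup{d}\xi$ over $\{|\xi|<2^{j}\}$ and $\{|\xi|\ge 2^{j}\}$, with identical bookkeeping.
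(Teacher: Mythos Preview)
Your argument is correct and is precisely the standard frequency-splitting proof of this interpolation inequality. The paper does not supply its own proof of this lemma but simply cites \cite[Lemma~8.2]{Xu_Kawashima_2015}; the proof given there follows exactly the strategy you outline (dyadic split at level $j$, geometric-series control of the low block via the $\dot B^{-\varrho}_{2,\infty}$ norm, trivial control of the high block via $\Vert\Lambda^{k+m}f\Vert_{L^2}$, then optimisation in $j$), so there is nothing to compare.

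One very minor remark: your closing sentence about bypassing the dyadic decomposition and splitting the raw Fourier integral over $\{|\xi|<2^j\}$ and $\{|\xi|\ge 2^j\}$ is slightly glib, since the low-frequency bound still has to pass through the definition of $\Vert f\Vert_{\dot B^{-\varrho}_{2,\infty}}$, which is intrinsically dyadic; but this does not affect the main argument, which is complete as written.
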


We recall now the following inequality which has been proved in \cite{Se68}.   
\begin{lemma} 
\label{Lemma_Segel}
\label{Integral_lemma} Let $a>0$ and $\,b>0$ be constants such that $\max (a,b)>1.$ Then, we have 
\begin{equation}
\int_{0}^{t}\left( 1+t-r\right) ^{-a}\left( 1+r\right) ^{-b}\textup{d}r\leq C\left(
1+t\right) ^{-\min \left( a,b\right) }. \label{First_integral_inequality}
\end{equation}
\end{lemma}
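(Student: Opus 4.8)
The plan is to establish the bound by the classical ``split at the midpoint'' device together with the symmetry $r\mapsto t-r$ of the integral, which interchanges the exponents $a$ and $b$. Hence it suffices to treat the case $a\ge b$; then $a=\max(a,b)>1$ and the target bound reads $C(1+t)^{-b}=C(1+t)^{-\min(a,b)}$. (When $b\ge a$, performing the substitution $r\mapsto t-r$ reduces to exactly this case with $a$ and $b$ swapped.) For $0<t\le 1$ both sides are comparable to positive constants and the inequality is trivial, so I would assume $t\ge 1$.

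First I would split $\int_0^t=\int_0^{t/2}+\int_{t/2}^t$. On the piece $r\in[t/2,t]$ one has $1+r\ge \tfrac{1}{2}(1+t)$, so $(1+r)^{-b}\lesssim (1+t)^{-b}$, and after the change of variable $s=t-r$,
\[
\int_{t/2}^t (1+t-r)^{-a}(1+r)^{-b}\,\textup{d}r \lesssim (1+t)^{-b}\int_0^{t/2}(1+s)^{-a}\,\textup{d}s\le (1+t)^{-b}\int_0^\infty (1+s)^{-a}\,\textup{d}s \lesssim (1+t)^{-b},
\]
where finiteness of the last integral uses $a>1$. On the piece $r\in[0,t/2]$ one has $1+t-r\ge \tfrac{1}{2}(1+t)$, so $(1+t-r)^{-a}\lesssim (1+t)^{-a}$ and
\[
\int_0^{t/2}(1+t-r)^{-a}(1+r)^{-b}\,\textup{d}r \lesssim (1+t)^{-a}\int_0^{t}(1+r)^{-b}\,\textup{d}r .
\]

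It then remains to check that this last quantity is $\lesssim (1+t)^{-b}$ in the three cases for $b$: if $b>1$, the $r$-integral is bounded and, since $a\ge b$, $(1+t)^{-a}\lesssim (1+t)^{-b}$; if $b=1$, the $r$-integral is $\lesssim \log(1+t)$ and, since $a>1$, $(1+t)^{-a}\log(1+t)\lesssim (1+t)^{-1}$; if $b<1$, the $r$-integral is $\lesssim (1+t)^{1-b}$ and, since $a\ge 1$, $(1+t)^{-a+1-b}\lesssim (1+t)^{-b}$. Adding the two pieces gives the assertion. The only point requiring (minor) care is this case distinction on $b$ together with the verification that the exponent arithmetic always closes, which it does precisely because $\max(a,b)>1$; everything else is routine, so I would not expect any genuine obstacle here.
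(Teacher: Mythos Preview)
Your argument is correct: the midpoint split together with the symmetry $r\mapsto t-r$ is the standard proof, and your case analysis on $b$ closes exactly as you say. The paper does not supply its own proof of this lemma but merely cites \cite{Se68}, so there is nothing to compare against beyond noting that you have supplied a complete self-contained argument where the paper defers to the literature.
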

The next lemma has been proved in \cite[Lemma 3.7]{Strauss_1968}.

\begin{lemma}
\label{Lemma_Stauss} Let $\mathrm{F}=\mathrm{F}(t)$ be a non-negative continuous function
satisfying the inequality
\begin{equation}
\mathrm{F}(t)\leq c_1+c_2 \mathrm{F}(t)^{\kappa},
\end{equation}
in some interval containing $0$, where $c_1$ and $c_2$ are positive
constants and $\kappa>1$. If $\mathrm{F}(0)\leq c_1$ and
\begin{equation}
c_1c_2^{1/(\kappa-1)}<(1-1/\kappa)\kappa^{-1/(\kappa-1)},
\end{equation}
then in the same interval
\begin{equation}
\mathrm{F}(t)<\frac{c_1}{1-1/\kappa}.
\end{equation}
\end{lemma}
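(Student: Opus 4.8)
The plan is to run a standard continuity (bootstrap) argument built on the geometry of the auxiliary scalar function $g(x):=c_1+c_2x^{\kappa}-x$ on $[0,\infty)$. First I would analyze $g$: since $g(0)=c_1>0$, $g(x)\to+\infty$ as $x\to\infty$, and $g'(x)=\kappa c_2x^{\kappa-1}-1$ vanishes only at $x_\star:=(\kappa c_2)^{-1/(\kappa-1)}$, the function $g$ is strictly decreasing on $[0,x_\star]$ and strictly increasing on $[x_\star,\infty)$. A direct computation gives $g(x_\star)=c_1-(1-1/\kappa)\,\kappa^{-1/(\kappa-1)}c_2^{-1/(\kappa-1)}$, so the hypothesis $c_1c_2^{1/(\kappa-1)}<(1-1/\kappa)\,\kappa^{-1/(\kappa-1)}$ is exactly the assertion $g(x_\star)<0$. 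Consequently $g$ has precisely two zeros $0<a<x_\star<b$, with $g\ge 0$ on $[0,a]\cup[b,\infty)$ and $g<0$ on $(a,b)$.

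Next I would locate the point $c_1$ relative to these zeros. Raising the hypothesis to the power $\kappa-1$ gives $c_1^{\kappa-1}c_2<(1-1/\kappa)^{\kappa-1}\kappa^{-1}<\kappa^{-1}$, which rearranges to $c_1<x_\star$. Since moreover $g(c_1)=c_2c_1^{\kappa}>0$, the point $c_1$ cannot lie in the forbidden interval $(a,b)$; being to the left of $x_\star$ it must therefore satisfy $c_1\le a$. In particular $\mathrm{F}(0)\le c_1\le a$.

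Now comes the bootstrap step. The assumed estimate $\mathrm{F}(t)\le c_1+c_2\mathrm{F}(t)^{\kappa}$ says precisely that $g(\mathrm{F}(t))\ge 0$, so for every $t$ in the given interval $I\ni 0$ we have $\mathrm{F}(t)\in[0,a]\cup[b,\infty)$. Since $\mathrm{F}$ is continuous and $I$ is connected, the image $\mathrm{F}(I)$ is connected and hence contained in a single one of the two components; as $\mathrm{F}(0)\le a<b$, that component must be $[0,a]$, so $\mathrm{F}(t)\le a$ on $I$. Finally I would upgrade this to the stated bound: from $g(a)=0$ we have $a=c_1+c_2a^{\kappa}$, and since $a<x_\star$ we estimate $c_2a^{\kappa}=(c_2a^{\kappa-1})\,a<(c_2x_\star^{\kappa-1})\,a=a/\kappa$, whence $a<c_1+a/\kappa$, i.e.\ $a(1-1/\kappa)<c_1$. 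Therefore $\mathrm{F}(t)\le a<c_1/(1-1/\kappa)$ for all $t\in I$, which is the claim.

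There is no analytic difficulty here; the only delicate bookkeeping is the algebraic verification that the hypothesis is simultaneously equivalent to $g(x_\star)<0$ and strong enough to force $c_1<x_\star$. This is exactly what guarantees that $\mathrm{F}(0)$ lies in the left connected component $[0,a]$ rather than being trapped in $[b,\infty)$, which is what allows the continuity argument to close.
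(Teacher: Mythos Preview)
Your proof is correct and complete; it is the standard continuity/bootstrap argument for this lemma. The paper itself does not prove the statement but simply quotes it from \cite[Lemma~3.7]{Strauss_1968}, so there is nothing further to compare.
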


\end{appendices} 

\section*{Acknowledgements}
The author would like to warmly thank Vanja Nikoli\'c and Reinhard Racke for sharing their  many insights about this problem and for the many discussions on the  first draft of this work. The author would also like to thank the reviewers for the careful reading of the manuscript and the thoughtful comments, which greatly helped improve this work.

%\bibliography{references_Besov}{}
%\bibliographystyle{siam}

\end{document}